\newcommand{\NN}{{\mathbb{N}}}
\newcommand{\MM}{{\mathbb{M}}}
\newcommand{\RR}{{\mathbb{R}}}
\newcommand{\CCC}{{\cal{C}}}
\newcommand{\DDD}{{\cal{D}}}
\newcommand{\ub}{{\stackrel{{ub}}{\longrightarrow}}}
\newcommand{\hY}{{{H^0}}}
\newcommand{\bY}{{{B^0}}}
\newcommand{\Hzb}{{{Z}}}
\newcommand{\e}{{\, = \,}}
\newtheorem{lemma} {Lemma}
\newtheorem{prop} {Proposition}
\newtheorem{theo} {Theorem}
\newtheorem*{theo*} {Theorem}
\renewcommand{\qed}{\hfill \mbox{\raggedright \rule{.1in}{.1in}}}
\newcommand{\Ran}{\operatorname{Ran}}
\newcommand{\Ker}{\operatorname{Ker}}
\title{Abundance of cusps and a converse to the Ambrosetti-AProdi theorem}
\author{Marta Calanchi, Carlos Tomei and André Zaccur}
\date{}
\begin{document}

\maketitle

\centerline{\it To Louis Nirenberg, wonderful example}

\begin{abstract}
According to the Ambrosetti-Prodi theorem, the map $F(u)= - \Delta u - f(u)$ between appropriate functional spaces is a global fold. Among the hypotheses, the convexity of the function $f$ is required. We show in two different ways that, under mild conditions, convexity is indeed necessary. If $f$ is not convex, there is a point with at least four preimages under $F$. More, $F$ generically admits cusps among its critical points. We present a larger class of nonlinearities $f$ for which the critical set of $F$ has cusps. The results are true for a class of boundary conditions.
\end{abstract}

\medbreak

{\noindent\bf Keywords:}  Ambrosetti-Prodi theorem, folds, cusps, fibers, mollifiers.

\smallbreak

{\noindent\bf MSC-class:} 35B32, 35J91, 65N30.

\bigskip
\bigskip

The celebrated Ambrosetti-Prodi theorem \cite{AP}, originally a statement about a differential operator between H\"older spaces, received a number of amplifications and  formulations from different authors.
Manes and Micheletti \cite{MM} weakened the original hypothesis, and Berger and Podolak \cite{BP}, together with later work of Berger and Church \cite{BC}, presented a geometric phrasing on Sobolev spaces. We present a version for each scenario.

For an open bounded  domain $\Omega \subset \RR^n$ with piecewise smooth boundary,
we consider the H\"older spaces $B^2_D= C^{2,\alpha}_0(\Omega),  \bY = C^{0,\alpha}(\Omega), \ \alpha \in (0,1)$, and Sobolev spaces $H^2_D = H^1_0(\Omega) \cap H^2(\Omega),  \, \hY  = H^0(\Omega)= L^2(\Omega)$.
The  Dirichlet Laplacian
\[ -\Delta_D:H^2_D \subset \hY  \to \hY  \, , \quad  \sigma (-\Delta_D) = \{\, 0 < \mu_{1,D} < \mu_{2,D} \le \cdots \, \} \]
has pure point spectrum $\sigma (-\Delta_D)$ and the eigenvalues $ \{ \mu_{k,D} \}$ are associated with a complete set of $L^2$ orthonormal eigenfunctions $\{ \psi_{k,D} \}$ in $B^2_D$. Let $f: \RR \to \RR$ be a strictly convex smooth function with
\[ \overline{f'(\RR)} = [a,b], \
 a,b \notin \sigma(-\Delta_D), \quad (a,b) \cap \sigma(-\Delta_D) = \{ \mu_{1,D}\}, \quad \lim_{|x| \to \infty} f''(x) =0.  \]

\begin{theo*}[Ambrosetti-Prodi, \cite{AP}, \cite{MM}] The differential operator
\[F: B^2_D \to \bY, \quad F(u) =  -\Delta_D u - f(u) \]
is a smooth map with  critical set $\CCC \subset B^2_D$ diffeomorphic to a hyperplane. The complement $\bY - F(C)$ splits in two connected components $\CCC_0$ and $\CCC_2$. The sets $\CCC_0, F(C)$ and $\CCC_2$ have respectively zero, one and two preimages.
\end{theo*}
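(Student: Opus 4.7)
The plan is to combine spectral analysis of the linearization $DF(u)$, a fold-type second-order computation, and a global Lyapunov--Schmidt reduction in the direction of the first eigenfunction, in the spirit of Berger--Podolak.

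First, I check that $F$ is a smooth Fredholm map of index zero: $-\Delta_D : B^2_D \to \bY$ is linear and invertible (its first eigenvalue is positive), while the Nemitski\u{\i} term $u \mapsto f(u)$ is smooth with derivative $v \mapsto f'(u) v$ compact, via the compact embedding $B^2_D \hookrightarrow C^0(\overline\Omega)$. Hence $DF(u) \e -\Delta_D - f'(u) I$ is a compact perturbation of an isomorphism, so Fredholm of index~$0$. I then describe the critical set spectrally: viewing $DF(u)$ as the $L^2$-selfadjoint operator $-\Delta_D - f'(u)$, its eigenvalues $\lambda_k(u)$ satisfy, by min--max and $f'(u(x)) \in [a,b]$,
\[ \mu_{k,D} - b \; \le \; \lambda_k(u) \; \le \; \mu_{k,D} - a. \]
The hypothesis that $(a,b)$ meets $\sigma(-\Delta_D)$ only at $\mu_{1,D}$ forces $\lambda_k(u) > 0$ for all $k \ge 2$, so
\[ \CCC \e \{\, u \in B^2_D : \lambda_1(u) \e 0 \,\}, \]
and at any such $u$ the kernel of $DF(u)$ is the line $\RR v_u$ with $v_u > 0$ the first eigenfunction of $-\Delta_D - f'(u)$.

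Next I verify the Whitney fold condition. Selfadjointness gives $\mathrm{coker}\, DF(u) \e \RR v_u$ too, and
\[ D^2 F(u)(v_u, v_u) \e -f''(u)\, v_u^2, \qquad \langle D^2 F(u)(v_u, v_u),\, v_u\rangle_{L^2} \e -\int_\Omega f''(u)\, v_u^3 \;<\; 0, \]
by strict convexity of $f$ and positivity of $v_u$ --- exactly the Lyapunov--Schmidt nondegeneracy of a Whitney fold. To globalize, I split $B^2_D \e \RR \psi_{1,D} \oplus W$ with $W$ an $L^2$-complement, write $u \e s\psi_{1,D} + w$, and for each $w \in W$ consider
\[ \Phi_w(s) \; := \; \langle F(s\psi_{1,D} + w),\, \psi_{1,D}\rangle_{L^2}. \]
Strict convexity of $f$ and positivity of $\psi_{1,D}$ give $\Phi_w''(s) < 0$; and $\Phi_w'(s) \to \mu_{1,D} - b < 0$ as $s \to +\infty$, $\Phi_w'(s) \to \mu_{1,D} - a > 0$ as $s \to -\infty$ (dominated convergence, using $\overline{f'(\RR)} \e [a,b]$). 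Hence $\Phi_w$ is strictly concave with a unique maximum $s^\ast(w)$. A global inversion argument for the ``horizontal'' map $H : u \mapsto (\pi_W u,\, \pi_{W'} F(u))$, for which the spectral gap $\lambda_k(u) > 0$ ($k \ge 2$) supplies local invertibility and a properness estimate supplies compactness, converts $F$ into a product form in which the only nontrivial dependence is through $\Phi_w$. In these coordinates $\CCC$ is a graph over $W$, diffeomorphic to a hyperplane; $F(\CCC)$ is the corresponding graph in $\bY$; and its complement splits into the two open half-graphs $\CCC_0$ (above) and $\CCC_2$ (below), with zero and two preimages respectively.

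The technically hardest step is this globalization: upgrading the local fold normal form and the one-dimensional concavity of $\Phi_w$ to a single global chart on $B^2_D$. This requires properness of $F$ --- which uses boundedness of $f'$, the off-spectrum assumption $a, b \notin \sigma(-\Delta_D)$, and a jumping-nonlinearity a priori bound --- together with global invertibility of the horizontal map $H$ above via a Hadamard-type theorem. Strict convexity of $f$ is precisely what makes $\Phi_w$ strictly concave and ensures monotonicity of the fiber reduction; the collapse of this mechanism when $f$ is not convex is what the rest of the paper exploits to generate extra preimages and cusps.
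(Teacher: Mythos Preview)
This theorem is not proved in the paper: it is quoted, with references to \cite{AP} and \cite{MM}, as classical background motivating the paper's converse results. There is thus no in-paper proof to compare against. Your outline nonetheless follows the Berger--Podolak route, and the paper lays out precisely this machinery (fibers, the height function, critical points via $\lambda_1$) in Section~\ref{section:fibers} for its own purposes, so the comparison is still meaningful.

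Your spectral identification of $\CCC$ via $\lambda_1(u)=0$ and the fold check $\int_\Omega f''(u)\,v_u^3 > 0$ are correct and are the heart of the matter. Two points in the globalization need tightening. First, the map $H(u) = (\pi_W u,\, \pi_{W'} F(u))$ as written discards the $\psi_{1,D}$-component of $u$ and lands in $W \times W'$, which is not a model for $B^2_D$; the diffeomorphism you actually need is $(s,w) \mapsto (s,\, \pi_{W'} F(s\psi_{1,D}+w))$, equivalently the fiber parametrization of Proposition~\ref{prop:smoothfiber}. Second, and more substantively, your $\Phi_w(s)$ is the height along the straight \emph{domain} line $s \mapsto s\psi_{1,D}+w$, and $\Phi_w'(s)=\langle DF(u)\psi_{1,D},\psi_{1,D}\rangle = 0$ is \emph{not} the equation of $\CCC$: the critical set is $\lambda_1(u)=0$, which corresponds to critical points of the height along \emph{fibers} (Proposition~\ref{prop:hcrit}). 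After your coordinate change $w$ depends on both $s$ and $z$, so the concavity of $\Phi_w$ for fixed $w$ does not transfer directly to the fiber height for fixed $z$. The clean argument is the one the paper records: along a fiber $u(t)$ one has $h'(t)=p(u(t))\,\lambda_1(u(t))$ with $p>0$, and at a zero of $\lambda_1$ the sign of $h''$ is that of $\delta_1(u)=-\int_\Omega f''(u)\,\phi_1^3<0$ by strict convexity, so every fiber critical point is a strict local maximum; combined with $h\to -\infty$ at both ends (Proposition~\ref{prop:infinity}, whose hypothesis $(4)$ is automatic here since $f'$ is strictly increasing with range $(a,b)\ni\mu_{1,D}$), each fiber carries exactly one critical point and $\CCC$ is a graph over $W^0$. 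With these corrections your sketch is sound.
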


The critical set of $F$ consists of the critical points of $F$ or, in other words, the functions $u \in B^2_D$ for which $DF(u)$ is not an isomorphism.

Berger and Podolak \cite{BP} and Berger and Church \cite{BC} introduced additional geometric ingredients. Here,  Sobolev spaces are especially convenient. For an element $u$ of a vector space, the set $\langle u \rangle$ is the line through $0$ and $u$.

\begin{theo*}[Berger-Church-Podolak, \cite{BP}, \cite{BC}] \label{theo:BP} For $f$ as above, $F:H^2_D \to\hY $ is a {\it global fold}. More precisely, split $\hY  = \ W^0 \oplus \langle \psi_{1,D} \rangle  $ in orthogonal subspaces. Then there are global homeomorphisms $\zeta:H^2_D \to W^0 \oplus \RR  $ and $\xi:\hY   \to W^0 \oplus\RR  $ for which $\tilde{F}(z,t) \, = \, \xi \circ F \circ \zeta^{-1} (z,t) = (z,t^2) \, .$
\end{theo*}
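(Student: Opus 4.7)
The plan is a global Lyapunov--Schmidt reduction along $\psi_{1,D}$, followed by straightening the resulting one-dimensional fold structure, in the spirit of Berger and Podolak.

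For the \emph{horizontal reduction}, orthogonally split $\hY = W^0 \oplus \langle\psi_{1,D}\rangle$ (as in the statement) and $H^2_D = W^2 \oplus \langle\psi_{1,D}\rangle$ with $W^2 := W^0 \cap H^2_D$; let $P_0$ denote the projection onto $W^0$ and write $u = w + s\psi_{1,D}$. For each $(v_0, s)\in W^0\times \RR$ I would solve $P_0 F(w+s\psi_{1,D}) = v_0$ for $w\in W^2$. The $w$-linearization is $-\Delta_D - P_0(f'(u)\,\cdot) : W^2\to W^0$, and since $f'(u)\le b < \mu_{2,D}$ while the spectrum of $-\Delta_D$ on $W^2$ starts at $\mu_{2,D}$, this operator is an isomorphism with uniformly bounded inverse. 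Global bijectivity follows from strict monotonicity of $w\mapsto -\Delta_D w - P_0 f(w+s\psi_{1,D})$ on $W^2$, combined with properness at infinity coming from the decay $f''(x)\to 0$ (equivalently via Hadamard's theorem). The resulting smooth diffeomorphism $\Psi(v_0, s) := W(v_0,s)+s\psi_{1,D}: W^0\oplus\RR \to H^2_D$ satisfies $F(\Psi(v_0, s)) = v_0 + \phi(v_0, s)\psi_{1,D}$ with
\[ \phi(v_0, s) \e \mu_{1,D}\,s - \int_\Omega f(\Psi(v_0, s))\,\psi_{1,D}\,dx, \]
so that solving $F(u) = v_0 + v_1\psi_{1,D}$ reduces to the scalar equation $\phi(v_0, s) = v_1$.

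For the \emph{scalar fiber analysis}, differentiating the defining relation of $W$ shows $h := \partial_s \Psi(v_0, s)$ satisfies $(-\Delta_D - f'(u))h = \partial_s\phi\cdot\psi_{1,D}$ under the constraint $\int h\,\psi_{1,D}\,dx = 1$. A further differentiation and self-adjointness yield the key identity
\[ \partial_s^2 \phi(v_0, s) \e -\int_\Omega f''(u)\,h^3\,dx. \]
From this I want to conclude that $\phi(v_0, \cdot)$ is unimodal with a unique maximum $\alpha(v_0)$ at a unique $s_0(v_0)$, tending to $-\infty$ at $|s|=\infty$. Strict convexity $f''>0$, positivity of $\psi_{1,D}$, and Green's-function positivity for $-\Delta_D - f'(u)$ whenever its first eigenvalue $\lambda_1(u)$ is positive imply $h$ has the sign of $\partial_s\phi$ before the critical point; a perturbative argument using $f''>0$ then shows $\lambda_1(u)$ is strictly decreasing in $s$, so it has exactly one zero, the unique critical point of $\phi(v_0, \cdot)$. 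Coercivity follows from $\phi(v_0, s)/s \to \mu_{1,D}-b<0$ as $s\to+\infty$ and $\to \mu_{1,D}-a>0$ as $s\to-\infty$, which in turn uses $f'(x)\to a, b$ at $\mp\infty$.

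To \emph{straighten the fold}, I define
\[ \tau(v_0, s) \e \sgn(s-s_0(v_0))\sqrt{\alpha(v_0)-\phi(v_0, s)}, \]
a bijection in $s$ for each $v_0$, jointly continuous in $(v_0, s)$. Setting $\zeta(\Psi(v_0, s)) := (v_0, \tau(v_0, s))$ and $\xi(v_0+r\psi_{1,D}) := (v_0, \alpha(v_0)-r)$ produces homeomorphisms with $\xi\circ F\circ \zeta^{-1}(z, t)=(z, t^2)$ by direct substitution. The step I expect to be the main obstacle is the unimodality in the scalar fiber analysis: the identity $\partial_s^2\phi = -\int f''(u)\,h^3\,dx$ does not deliver global concavity because $h$ need not be globally sign-definite once the Green's function of $-\Delta_D - f'(u)$ loses positivity (beyond the critical fibre), so one must combine the monotonicity of $\lambda_1(u)$ along the fiber, the Krein--Rutman structure of the kernel at critical points, and the asymptotic behavior of $\phi$ to preclude a second critical point. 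A secondary concern is the bi-continuity of $\zeta^{-1}$ across the square-root degeneracy, which is routine given uniform concavity and coercivity bounds on bounded subsets of $W^0$.
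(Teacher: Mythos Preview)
The paper does not prove this theorem; it is stated as a cited background result from \cite{BP} and \cite{BC}, so there is no proof in the paper to compare your sketch against. The paper does, however, develop in its section on fibers the same Lyapunov--Schmidt/height-function machinery that underlies your argument (Propositions~\ref{prop:smoothfiber}, \ref{prop:hcrit}, \ref{prop:infinity}), and your horizontal reduction, height function $\phi$, and asymptotics match the paper's $w(z,t)$, $h$, and Proposition~\ref{prop:infinity} essentially verbatim. So your approach is the standard one and is fully compatible with the paper's framework.

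Your sketch is correct in outline, but the step you flag as the main obstacle --- unimodality of $\phi(v_0,\cdot)$ --- admits a cleaner resolution than the one you propose. At any critical point $s_0$ of $\phi(v_0,\cdot)$ your equation $(-\Delta_D - f'(u))h = 0$ together with the normalization $\langle h,\psi_{1,D}\rangle = 1$ force $h$ to be a \emph{positive} multiple of the ground state $\phi_1(u)$; this is precisely the content of Proposition~\ref{prop:hcrit}. Your identity $\partial_s^2\phi = -\int_\Omega f''(u)\,h^3$ then gives $\partial_s^2\phi(v_0,s_0) < 0$ directly from strict convexity $f''>0$, so every critical point of $\phi(v_0,\cdot)$ is a strict local maximum. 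Combined with $\phi\to -\infty$ as $|s|\to\infty$, this forces exactly one critical point. You therefore do not need to argue that $\lambda_1$ is monotone along the entire fiber, nor do you need any control on the sign of $h$ away from critical points; the obstacle you anticipate dissolves once you evaluate the second derivative only at the critical points themselves.
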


Said differently, the following diagram commutes.
\[
\begin{array}{ccl}
H^2_D& \stackrel{{\scriptstyle F}}{\longrightarrow}&\hY\\
   {\scriptstyle \zeta}\downarrow & &
\downarrow{\scriptstyle \xi}\\
 W^0 \oplus \RR  &\stackrel{{\scriptstyle (z, t^2)}}{\longrightarrow}&   W^0 \oplus \RR   \\
  \end{array}
\]

So-called Ambrosetti-Prodi type results for different boundary conditions have been considered also extensively (\cite{CMN},\cite{O},\cite{MRZ}, \cite{MP}).

Settling a question raised by Dancer \cite{D}, we prove a converse result under mild conditions: the theorems do not hold for a nonconvex function $f$, for a number of boundary conditions. More precisely, we consider a domain of self-adjointess $H^2_b(\Omega)\subset H^0(\Omega)$ of $-\Delta_b: H^2_b(\Omega)  \to H^0(\Omega)$ corresponding to {\it standard boundary conditions}, defined in Section \ref{section:spectrum}, for which $\sigma(-\Delta_b) = \{ \mu_{1,b} < \mu_{2,b} \le  \ldots \}$.
Dirichlet, Neumann and periodic boundary conditions are standard.

As for the nonlinearity, we assume that $f: \RR \to \RR$ is a smooth function and, in different sections, we use some of the following  hypotheses:

\begin{itemize}
\item[(1)] (strict interaction with $\mu_{1,b}$) There are $m, M \in \RR$ for which
\[ \overline{f'(\RR)} = [m,M] \quad \hbox{and} \quad m < \mu_{1,b} < M < \mu_{2,b} \, ; \]
\item[(2)] (genericity) The functions $f' - \mu_{1,b}$, $f''$  and $f'''$ have no common zero;
\item[(3)] (nonconvexity) $f''$ has a zero $x_\ast$;
\item[(4)] (nonresonance with $\mu_{1,b}$) There is an $\epsilon >0$ such that
\[ f'(x) > \mu_{1,b} + \epsilon \ \hbox{ for } \ x \to \infty \ \hbox{ and } \ f'(x) < \mu_{1,b} - \epsilon \ \hbox{ for }\ x \to -\infty. \]
\end{itemize}

Define $B^2_b = H^2_b \cap C^{2,\alpha}(\Omega)$ and consider $F: B^2_b \to B^0$ and let $D$ be a dense subspace of $B^2_b$ in the $L^2$ norm.

\begin{theo}\label{Rec} Consider a standard boundary condition and let  $f:\RR \to \RR$ satisfy hypotheses $(1), (2), (3)$ and $(4)$. Then for some $y \in B^0 $, $F(u)=y$ admits (at least) four solutions in $D \subset B^2_b$.
\end{theo}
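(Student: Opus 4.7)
The strategy is a Lyapunov--Schmidt reduction along the first eigendirection of $-\Delta_b$, followed by a construction that exploits nonconvexity (3) and genericity (2) to produce a reduced scalar function with two local maxima.

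Split $H^0 = W^0 \oplus \langle \psi_{1,b}\rangle$ orthogonally and write $u=w+t\psi_{1,b}$, $y=\eta+s\psi_{1,b}$ with $w,\eta\in W^0$. Projecting $F(u)=y$ onto $W^0$ yields
\[
-\Delta_b w - P_{W^0}\,f(w+t\psi_{1,b}) = \eta.
\]
Since $-\Delta_b$ restricted to $W^0$ has spectrum in $[\mu_{2,b},\infty)$ while $f'\le M<\mu_{2,b}$ by (1), the operator $-\Delta_b-f'(\cdot)\mathrm{Id}$ is uniformly positive on $W^0$; a monotone-operator (or contraction) argument then furnishes a unique smooth solution $w=w(t,\eta)\in B^2_b\cap W^0$. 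The equation $F(u)=y$ is thereby equivalent to the scalar bifurcation equation
\[
\alpha(t,\eta) := \mu_{1,b}\,t - \int_\Omega f\bigl(w(t,\eta)+t\psi_{1,b}\bigr)\psi_{1,b}\,dx = s.
\]
Hypothesis (4), together with the standard bounds on $w(t,\eta)$ and the fact that $\mu_{1,b}\in(m,M)$, gives $\alpha(t,\eta)\to-\infty$ as $|t|\to\infty$, so $\alpha(\cdot,\eta)$ has a global maximum and its shape governs the preimage count.

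The core of the proof is to exhibit $\eta$ for which $\alpha(\cdot,\eta)$ has three critical points, i.e.\ the shape ``max--min--max''. Geometrically, $t_\ast$ is critical for $\alpha(\cdot,\eta)$ precisely when the corresponding $u_\ast=w(t_\ast,\eta)+t_\ast\psi_{1,b}$ lies on the critical set $\CCC$ of $F$, so one wants a fiber $t\mapsto w(t,\eta)+t\psi_{1,b}$ meeting $\CCC$ transversally at three points. Hypothesis (3) gives $x_\ast$ with $f''(x_\ast)=0$, and by (2) we may assume $f'''(x_\ast)\neq 0$, so $f''$ changes sign near $x_\ast$; accordingly the ``single fold'' behavior of the convex case breaks down. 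A suitable profile $u_0$ is constructed by mollifying a function that alternates between a region where $f''>0$ and one where $f''<0$; setting $\eta$ equal to the $W^0$-component of $F(u_0)$, one verifies that the fiber based at $\eta$ crosses $\CCC$ on a ``convex branch'', on a ``concave branch'', and once in the transition region, giving the desired three crossings.

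With such $\eta$ in hand, choose $s$ strictly between the local minimum value of $\alpha(\cdot,\eta)$ and the smaller of the two local maxima; the intermediate value theorem, applied to the four monotone pieces of $\alpha(\cdot,\eta)$, delivers four distinct solutions $t_1<t_2<t_3<t_4$, hence four preimages $u_i=w(t_i,\eta)+t_i\psi_{1,b}$ of $y=\eta+s\psi_{1,b}$. Elliptic regularity places them in $B^2_b$ automatically, and since the $u_i$ are regular preimages they depend smoothly on $y$ by the implicit function theorem, so a small perturbation of $y$ inside $B^0$ lands the four solutions in the dense subspace $D$. The main obstacle is the middle step: controlling the nonlocal dependence of $w(t,\eta)$ on $t$ finely enough to guarantee three transverse intersections of the fiber with $\CCC$. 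This is exactly where the interplay between fibers and the mollifier construction, stressed in the paper's keywords, is decisive.
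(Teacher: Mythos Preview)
Your overall architecture matches the paper's: Lyapunov--Schmidt along $\psi_{1,b}$, the height $\alpha(\cdot,\eta)\to-\infty$ at both ends from (4), and the goal of forcing a local minimum of $\alpha$ on some fiber. The genuine gap is exactly the step you flag yourself. You assert that a fiber through a suitably mollified $u_0$ meets $\CCC$ transversally three times, but offer no mechanism to verify this; the fiber $t\mapsto w(t,\eta)$ is a nonlocal object and ``convex branch / concave branch / transition'' is not an argument.

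The paper sidesteps the three-crossings count entirely. It observes that it suffices to exhibit a \emph{single} point $u$ at which the height has a strict local minimum along its fiber; the asymptotics then force two flanking maxima automatically. This local-minimum condition is recast as two scalar constraints on $u$ alone, not on a whole fiber: $\lambda_1(u)=0$ (so $u\in\CCC$) and $\delta_1(u):=D\lambda_1(u)\,\phi_1(u)=-\int_\Omega f''(u)\,\phi_1(u)^3>0$ (so, via Proposition~\ref{prop:hcrit}, $h''>0$ there). These functionals are then evaluated on \emph{two-valued} test functions $\overline u=\ell\,\chi_\theta+r\,\chi_\theta^c$. A balancing lemma (Lemma~\ref{lemma:balance}) shows that when $f'(\ell),f'(r)$ straddle $\mu_{1,b}$ there is a unique $\theta$ giving $\lambda_1(\overline u)=0$; choosing $\ell$ near a zero of $f''$ with $f''(\ell)<0$ and $r$ along an almost-critical sequence for $f'$ (Lemma~\ref{lemma:PS}) with $f''(r)\to 0$ then forces $\delta_1(\overline u)>0$. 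Finally $\lambda_1,\delta_1$ are $ub$-continuous on $L^2\cap L^\infty$, so one mollifies $\overline u$ into $D$ and slides along a segment there to restore $\lambda_1=0$ while keeping $\delta_1>0$. Your plan lacks both the reduction to the pointwise pair $(\lambda_1,\delta_1)$ and the two-valued/balancing construction that makes them computable; without these the core step remains heuristic. (A minor separate issue: your last paragraph's perturbation of $y$ moves the four preimages inside $B^2_b$, but there is no reason a small $B^2_b$-perturbation lands in an $L^2$-dense subspace $D$.)
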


The result is proved in Section \ref{section:theo3}.
The proof uses ideas of Berger and Podolak, also  extensively used in \cite{MST}. We consider {\it fibers}, inverses under $F$ of special straight lines, described in more detail in Section \ref{section:fibers}. Under  hypotheses $(1)$ and $(4)$, for appropriate coordinates, the restriction of $F$ to a fiber is a map from $\RR$ to $\RR$ which goes to $-\infty$ for $|t| \to \infty$. The existence of a point with four preimages reduces to the search of a fiber on which such map admits a local minimum. Only a weaker version of genericity (hypothesis $(2)$) is used in the proof.

\medskip
In Section \ref{section:cusp1}, we provide geometric information of local character which, in a sense, strengthens the previous theorem: generically, $F$ admits {\it cusps}. The literature concerning cusps of differential operators is extensive (\cite{CaT}, \cite{ChT2}, \cite{LM}, \cite{R1}, \cite{R2}). A self-contained definition is the following.
Let $\lambda_1(u)$ be the smallest eigenvalue and $\phi_1(u)$ the associated $L^2$-normalized eigenvector of the Jacobian $DF(u): B^2_b \to B^0$. As we shall see in Proposition \ref{prop:taubasta}, a zero $u_c$ of the function
\[ \Lambda: B^2_b \to \RR^2 \, , \quad \Lambda(u) = (\lambda_1(u), \ \delta_1(u) = \ D \lambda_1(u) \ \phi_1(u)    )  \]
with $\tau_1(u_c) = D\delta_1(u_c) \, \phi_1(u_c) \ne 0 $ is a cusp of $F: B^2_b \to \bY$.

At a cusp $u_c$, a function $F$ admits a simple local form \cite{ChT2}: for some Banach space $Y$, changes of variables near $u_c$ and $F(u_c)$ convert $F$ into
\[ \tilde F (w,x,y ) \, = \, (\, w, x, y^3 - x \, y ) \, , \quad \hbox{for} \, \, w \, \in Y \, , \ x , \, y \, \in \, \RR \,  . \]
In particular, points near $F(u_c)$ may have one, two or three preimages near $u_c$.

\begin{theo}\label{theo:c} Consider a standard boundary condition. Let $f$ satisfy  $(1), (2), (3)$.
Then  either $F: B^2_b \to \bY$ has a cusp in $D$ or there is a family of disjoint arcs  each of which is taken by $F$ to a single point. In both cases, for some $g \in B^0$, $F(u) = g$ has at least three solutions. If $F$ is proper, it has a cusp in $D$.
\end{theo}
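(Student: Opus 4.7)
The plan is to apply Proposition~\ref{prop:taubasta}, locating $u_c \in D$ where $\Lambda(u_c) \e 0$ and $\tau_1(u_c) \neq 0$. I would begin with the perturbation formulas for the simple bottom eigenvalue: $D\lambda_1(u) v \e -\int_\Omega f''(u) v \phi_1(u)^2$, whence $\delta_1(u) \e -\int_\Omega f''(u) \phi_1(u)^3$, with a similar expression for $\tau_1(u)$ in terms of $f'''(u)$ and a derivative of $\phi_1(u)$. Under a standard boundary condition $\phi_1(u) > 0$ on $\Omega$, so $\delta_1(u)$ is the integral of $-f''(u)$ against a positive weight.

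Using hypothesis (2), I would check that the critical set $\mathcal{C} \e \lambda_1^{-1}(0)$ is a smooth codimension-one submanifold of $B^2_b$: genericity forbids $D\lambda_1 \e 0$ on $\mathcal{C}$, so the implicit function theorem applies. Hypothesis (1) ensures that $\lambda_1 > 0$ where $f'(u)$ stays close to $m$ and $\lambda_1 < 0$ where it stays close to $M$, so $\mathcal{C}$ is nonempty, and together with the path-connectedness of $B^2_b$ one obtains $\mathcal{C}$ connected. The decisive and most delicate step---the main obstacle I foresee---is to exhibit both signs of $\delta_1$ on $\mathcal{C}$. In the convex case $f'' > 0$ and $\delta_1 < 0$ throughout $\mathcal{C}$, which is the classical Ambrosetti-Prodi-Berger-Podolak-Church fold; this is precisely why convexity suffices there. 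Under (3), $f''$ vanishes at $x_\ast$ and changes sign, so I would construct $u^\pm \in \mathcal{C}$ whose ranges concentrate on opposite sides of $x_\ast$, yielding $\delta_1(u^\pm)$ of opposite signs. Enforcing $\lambda_1(u) \e 0$ while simultaneously steering the sign of $\delta_1(u)$ is where infinite-dimensional flexibility and careful test function choices are crucial. Continuity of $\delta_1$ along a path in $\mathcal{C}$ joining $u^+$ to $u^-$ then yields $u_c$ with $\delta_1(u_c) \e 0$.

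Two cases arise at $u_c$. If $\tau_1(u_c) \neq 0$, Proposition~\ref{prop:taubasta} gives a cusp, and the normal form $\tilde F(w, x, y) \e (w, x, y^3 - x y)$ supplies $g \in B^0$ near $F(u_c)$ with three distinct preimages; openness of the cusp condition and $L^2$-density of $D$ then place the cusp (or an $L^2$-nearby one with the same local picture) into $D$. If $\tau_1(u_c) \e 0$, the kernel direction $\phi_1(u_c)$ is tangent to $\mathcal{C}$ at $u_c$ and the degeneracy propagates: one produces an arc through $u_c$ inside $\mathcal{C}$ on which both $\lambda_1$ and $\delta_1$ vanish, and since $DF(u)\phi_1(u) \e \lambda_1(u)\phi_1(u) \e 0$ the map $F$ is constant along this arc. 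A transverse slice of $\mathcal{C}$ sweeps out the disjoint-arcs family of the alternative, and any value $F$ takes on such an arc has at least three preimages. Finally, when $F$ is proper, each such arc lies in the compact fiber $F^{-1}(F(u_c))$, and this continuum of critical points is incompatible with $DF$ being Fredholm of index zero, so the arcs alternative is excluded and only the cusp case remains, placing a cusp in $D$.
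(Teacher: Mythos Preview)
Your outline shares the right target with the paper---produce a regular nonfold and then examine $\tau_1$---but several steps do not go through as written.

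First, the dichotomy is mis-stated. Vanishing of $\tau_1$ at a \emph{single} nonfold $u_c$ does not produce an arc collapsed by $F$; nothing forces $\delta_1$ to remain zero as you leave $u_c$ in the $\phi_1$ direction. The correct dichotomy (the paper's Proposition~\ref{Nfibers}) is global on the nonfold set $\mathcal{N} = \lambda_1^{-1}(0) \cap \delta_1^{-1}(0)$: either $\tau_1$ is somewhere nonzero on $\mathcal{N}$ (and that point is a cusp), or $\tau_1 \equiv 0$ on all of $\mathcal{N}$. Only in the latter case is $\phi_1(u)$ tangent to $\mathcal{N}$ at \emph{every} $u \in \mathcal{N}$, so that integral curves of $u' = \phi_1(u)$ stay in $\mathcal{N}$; these curves are precisely the fibers (Proposition~\ref{prop:hcrit}), and along a fiber lying in $\lambda_1^{-1}(0)$ one has $h' = p\,\lambda_1 = 0$, whence $F$ is constant. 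Your sentence ``$DF(u)\phi_1(u)=0$ so $F$ is constant along the arc'' presupposes that the arc is everywhere tangent to $\phi_1$, which is exactly what must be shown.

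Second, your properness argument is incorrect: a continuum of critical points is entirely compatible with $DF$ being Fredholm of index zero everywhere---indeed $\mathcal{C}$ itself is such a continuum. The actual obstruction is that each collapsing arc is a full fiber, parameterized by $t \in \mathbb{R}$ and reaching arbitrary heights $\langle u(t), \psi_{1,b}\rangle$; thus $F^{-1}(F(u_c))$ contains an unbounded set and cannot be compact.

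Third, the construction of the nonfold via connectedness of $\mathcal{C}$ is unjustified: ``path-connectedness of $B^2_b$ gives $\mathcal{C}$ connected'' is false for level sets in general, and no argument is offered. The paper bypasses this completely. It first works in the explicit three-parameter family of two-valued functions $\overline u(\ell,r,\theta)=\ell\,\chi_\theta + r\,\chi_\theta^c$, where enforcing $\lambda_1=0$ is a one-dimensional intermediate-value problem in $\theta$ (Lemma~\ref{lemma:balance}) and then $\delta_1=0$ is another such problem in $r$; the choice $f'''(\ell)=0$ forces linear independence of $\nabla\lambda_1$ and $\nabla\delta_1$ at the resulting $\overline u_{nf}$. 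A topological-degree argument on a two-dimensional affine slice then transfers this to a regular nonfold $u_{nf}\in D$, with no appeal to the global topology of $\mathcal{C}$.
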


An arc is a diffeomorphic image of an open interval.
If collapsing arcs exist, they  are abundant, being parameterized by an open set of a codimension 3 subspace in $B^2_b$.
There is another simple hypothesis which guarantees the existence of a cusp, presented in Proposition \ref{prop:noworse}.

An ancestor of these results is Theorem 4.3 in \cite{MST}. Suppose $f(x)$ goes to $\infty$ when $|x|\to \infty$ and $f''$ is strictly positive: then the differential operator
\[ G: C^1 ([0,1]) \to C^0([0,1]), \quad u \mapsto u' + f(u), \quad u(1) = u(0), \]
is a global fold. On the other hand, under generic hypotheses which we do not describe, if $f'' $ is negative at some point then $G$ has points with four preimages.

Cusps are frequently associated with operators $F(u) = - \Delta u - f(u)$ for cubic nonlinearities $f$. Ruf's global study of the geometry of the nonlinearity $f(u) = -u^3 + c u$  is a very interesting case study (\cite{R1}), in which the associated operator $F$ is proper.  Hypothesis $(1)$ excludes such functions, but the alternative hypothesis $(H_k)$ is  more tolerant:
\begin{itemize}
\item[$(H_k)$] {The operator $-\Delta_b: H^2_b \to H^0$ is self-adjoint, some eigenvalue $\mu_{k,b}$  is isolated and simple and  there are points $x_\mu, y_\mu \in \RR$ for which
\[ f'(x_\mu) = f'(y_\mu)= \mu_{k,b} \quad \hbox{and} \quad f''(y_\mu) \  f''(x_\mu) < 0 \, . \]
}
\end{itemize}

\begin{theo}\label{theo:3sol} Suppose standard boundary conditions and hypotheses $(2)$ and $(H_k)$. Then either $F: B^2_b \to \bY$ has a cusp in $D$ or there is a family of disjoint arcs  each of which is taken by $F$ to a single point.  For $k=1$, a cusp in $D$ necessarily occurs  if $F$ is proper or $f'''(x_\mu), f'''(y_\mu) \ge 0$.
\end{theo}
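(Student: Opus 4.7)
My plan is to mirror the argument of Theorem~\ref{theo:c}, substituting the $k$-th eigenvalue of $DF(u)=-\Delta_b-f'(u)$ for the first. Under $(H_k)$, $\mu_{k,b}$ is isolated and simple, so perturbation theory provides smooth maps $u\mapsto\lambda_k(u)$ and $u\mapsto\phi_k(u)$ on the open set where $\lambda_k$ remains isolated in the spectrum of $DF(u)$. Define, in exact analogy with $k=1$,
\[ \Lambda_k(u)=(\lambda_k(u),\delta_k(u)),\quad \delta_k(u)=D\lambda_k(u)\phi_k(u),\quad \tau_k(u)=D\delta_k(u)\phi_k(u). \]
The proof of Proposition~\ref{prop:taubasta} applies verbatim, so a zero $u_c$ of $\Lambda_k$ with $\tau_k(u_c)\ne 0$ is a cusp of $F$; a Hellmann--Feynman computation yields $\delta_k(u)=-\int_\Omega f''(u)\,\phi_k(u)^3$.

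To exhibit a zero of $\Lambda_k$ I would bridge the critical set between the constant functions $x_\mu$ and $y_\mu$ (or, if the boundary condition excludes constants, between elements of $B^2_b$ whose range is concentrated near them). At $u\equiv c$, $\phi_k(c)$ is a unit multiple of the $k$-th eigenfunction of $-\Delta_b$, so $\delta_k(c)=-f''(c)\int\phi_k(c)^3$; $(H_k)$ then forces opposite signs at $x_\mu$ and $y_\mu$ whenever this integral is nonzero, and a small tangential perturbation inside $\lambda_k^{-1}(0)$ (which, by hypothesis (2), is locally a smooth hypersurface) disposes of the degenerate case. The graph-over-hyperplane construction from Theorem~\ref{theo:c}, adapted to the index $k$, produces a continuous arc $\gamma\subset\lambda_k^{-1}(0)$ joining the two endpoints, and continuity of $\delta_k$ combined with the sign change yields a zero $u_c\in\gamma$ of $\Lambda_k$. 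If $\tau_k(u_c)\ne 0$, $u_c$ is a cusp of $F$, relocated into $D$ by the density argument already used in Theorem~\ref{theo:c}; otherwise, the kernel arc through $u_c$, corrected to stay in $\lambda_k^{-1}(0)$, is collapsed by $F$ to the single point $F(u_c)$, and varying $u_c$ inside $\Lambda_k^{-1}(0)$ yields the advertised family of disjoint collapsing arcs.

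For the sharpening at $k=1$ I would exclude the collapsing alternative under each additional hypothesis. If $F$ is proper, a collapsing arc would place a continuum inside a single fibre of $F$; combined with the local fold-or-cusp model at nearby critical points and the finiteness of fibres at regular values imposed by properness, this is impossible. If instead $f'''(x_\mu),f'''(y_\mu)\ge 0$, one differentiates the Hellmann--Feynman identity once more to obtain an expression for $\tau_1$ which, evaluated at the constants, is dominated by the sign of $-f'''(c)\int\psi_{1,b}^4$ (nonzero since $\psi_{1,b}>0$) once the remaining $f''$-dependent piece is controlled by $(H_k)$; propagating this sign along $\gamma$ excludes $\tau_1(u_c)=0$ and delivers a cusp. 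The most delicate step I anticipate is the global connectedness of $\lambda_k^{-1}(0)$ for $k\ge 2$: hypothesis (2) furnishes only local smoothness, so the graph-over-hyperplane structure available for $k=1$ has to be replaced by a patching of implicit-function charts together with a verification that $x_\mu$ and $y_\mu$ lie in the same connected component.
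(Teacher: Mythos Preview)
Your overall architecture is right, but the construction of the nonfold and the properness argument both have genuine gaps that the paper fills in a way you did not anticipate.

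\textbf{Finding the nonfold.} You propose to connect the constants $x_\mu$ and $y_\mu$ by an arc inside $\lambda_k^{-1}(0)$ and then invoke a sign change of $\delta_k$. As you yourself note, for $k\ge 2$ there is no global graph-over-hyperplane structure, and you offer no mechanism to guarantee that the two constants lie in the same component of $\lambda_k^{-1}(0)$; this is a real gap, not a technicality. The paper sidesteps the issue completely: since $f'(x_\mu)=f'(y_\mu)=\mu_{k,b}$, \emph{every} two-valued function $\overline u_\theta=x_\mu\chi_\theta+y_\mu\chi_\theta^c$ satisfies $f'(\overline u_\theta)\equiv\mu_{k,b}$, hence $\lambda_k(\overline u_\theta)=0$ for all $\theta\in[0,2\pi]$. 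This one-parameter family is the arc you are looking for, obtained without any appeal to the global topology of $\lambda_k^{-1}(0)$. Along it $\delta_k$ interpolates between the values at the two constants, and the paper also extracts from this explicit family the regularity of the nonfold (linear independence of $\nabla\lambda_k$ and $\nabla\delta_k$), using that $DF(\overline u_{nf})=-\Delta_b-\mu_{k,b}$ exactly.

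\textbf{The case $k=1$, $F$ proper.} Your sketch (``finiteness of fibres at regular values'') does not apply: the collapsing arcs lie entirely in the critical set. The paper's argument is sharper and genuinely uses $k=1$. One integrates $u'=\phi_1(u)/\|\phi_1(u)\|$ inside ${\cal N}_1$; because $\psi_{1,b}>0$ and $\phi_1(u)>0$, the function $t\mapsto\langle\psi_{1,b},u(t)\rangle$ is strictly increasing, so orbits are non-periodic. A local Lyapunov--Schmidt argument then shows that an accumulation point of $\{u(n)\}$ would force infinitely many $u(n)$ onto a single short arc of $F^{-1}(z_0)$, contradicting their unit spacing and the height function. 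Hence the orbit closure is non-compact, contradicting properness of $F$. Your outline misses both the height-function step and the local-arc step.

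\textbf{The case $f'''(x_\mu),f'''(y_\mu)\ge 0$.} Here you evaluate $\tau_1$ at the constants and try to propagate a sign; the paper instead evaluates $\tau_1$ directly at the two-valued $\overline u_{nf}$, where $\tau_1=-\int f'''(\overline u_{nf})\phi_1^4-3\langle w,f''(\overline u_{nf})\phi_1^2\rangle$ with $w=(DF)_W^{-1}(f''\phi_1^2)$. The second term is strictly negative because $(DF)_W^{-1}$ is positive on $\phi_1^\perp$ (only $\lambda_1$ vanishes), and the first is $\le 0$ by hypothesis; thus $\tau_1(\overline u_{nf})<0$ outright, and mollification gives the cusp.
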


The proof of Theorems \ref{theo:c} and \ref{theo:3sol} splits into a few steps. We first show in Section \ref{section:defcusp} that the requirements on $u_c$ in Theorem  \ref{theo:c} consist of an appropriate description of a cusp. In Section \ref{section:nonfold1} we find a zero $\overline u_{nf}$ of (an extension of) $\Lambda$ taking only two real values, which is mollified in Section \ref{section:mollifiers} to obtain a smooth zero $u_{nf}$ of $\Lambda$ in $D$. We are left with showing in Section \ref{section:transversality} that the transversality conditions stated in Theorem \ref{theo:c} are satisfied either by $u_{nf}$ or by some zero $u_c \in D$ of $\Lambda$ nearby. For Dirichlet conditions, for example, we may take $u_c \in C_0^\infty(\Omega)$.

The technique for mollifying functions respecting nonlinear restrictions used to pass from $\overline u_{nf}$ to $u_{nf}$ might be of independent interest: a different version was used in the construction of homotopies in \cite{BST1}.

%

Under hypotheses $(1)-(4)$, the existence of a cusp $u_c$ implies Theorem \ref{Rec}. Indeed, from the local form of $F$ near $u_c$, there is a point $g$ with three preimages, as for the polynomial $p(x) = x^3 - x$ near zero. A fourth pre-image arises  because, by hypothesis $(4)$, $F$ is proper of degree zero (or because along a fiber, for large $|x|$, the function $F$ looks like $x \mapsto - x^2$).

\medskip
Theorems \ref{Rec}  and  \ref{theo:c}  replicate the structure of a pair of papers by Ruf. In \cite{R3} he finds points in the image of a semi-linear elliptic boundary value operator  with five preimages. In \cite{R4} he shows that the operator acting on functions defined on intervals or rectangles (with Neumann boundary conditions) admits a {\it butterfly} $u_b$, so that there are points near $F(u_b)$ with five preimages. Since we stop at cusps, our computations are simpler despite of the fact that we handle Laplacians on arbitrary bounded sets. The mollifying arguments we employ allow us to be rather careless about boundary conditions up to the last moment.

\smallskip
\noindent{ \bf Acknowledgements} Tomei and Zaccur gratefully acknowledge support from CAPES, CNPq and FAPERJ.

\section{Proof of Theorem  \ref{Rec}} \label{section:theo3}

We sketch the proof of Theorem  \ref{Rec}. Let $\psi_{1,b}$ be the positive normalized eigenvector associated with the free eigenvalue $\mu_{1,b}$. In Section \ref{section:fibers}, following ideas in \cite{BP}, we foliate $H^2_b$ and  $B^2_b$ into {\it fibers}, the inverses of lines $\{z+ s \, \psi_{1,b} \, , s \in \RR \}$ which turn out to be
connected curves of the form
$\{ w(z,t) + t \psi_{1,b} , \, t \in \RR, \langle w(z,t), \psi_{1,b} \rangle = 0 \}$ (brackets denote the usual $L^2$ inner product).

Fibers are easier to obtain in Sobolev spaces, but in general working on H\"older spaces is simpler, due to the additional smoothness of the function $F$ restricted to them. The restriction $F^z$ of $F$ to each fiber $\{ u(z,t), t \in \RR\} \subset B^2_b$ is of the form  $F^z(u(z,t)) = z + h(u(z,t)) \psi_{1,b} $ for a real valued {\it height } $h$ for which, as shown in Proposition \ref{prop:infinity},
\[ \lim_{ |t| \to \infty} h(u(z,t)) = - \, \infty .\]

Thus, in order for $F$ to be a global fold, the restriction $F^z$ should look (topologically) like $t \mapsto - t^2$. On the other hand, we will construct in Section \ref{subsection:psi} fibers $u(z,t)$ on which $F^z$ admits a strict local minimum. The asymptotic behavior of $h$ implies the existence of points with four preimages, proving Theorem \ref{Rec}. The interested reader may find a numerical example in Section 5.3 of \cite{CTZ1}.

It turns out that critical points of $h$ along a fiber are exactly the critical points of $F$ (Proposition \ref{prop:hcrit}). The properties of interest at the local minimum $u_m$,
\begin{enumerate}
\item $u_m$ is such that $v \mapsto - \Delta_b \, v - f'(u) v$ has an eigenvalue equal to 0,
\item The height $h$ at $u_m$ has positive second  derivative along its fiber,
\end{enumerate}
are verified by checking if $ \lambda_1(u ) = 0 , \, \delta_1(u)>0$ for
appropriate functionals $\lambda_1$ and $\delta_1$ (introduced in
Theorem \ref{theo:c}) which extend to bounded functions in $H^0$. We first find such a point $\overline u$ among {\it two
valued potentials}, a class of very simple functions defined in
Section \ref{four}. Mollification then yields the required  $u \in
B^2_b$.

\subsection{Basic spectral theory and smoothness} \label{section:spectrum}

We consider  boundary conditions associated with domains $H^2_b = H^2_b(\Omega) \subseteq H^2(\Omega)$ on which $-\Delta_b: H^2_b  \to H^0$ is self-adjoint. Set
$ \sigma(-\Delta_b) = \{ \mu_{1,b} < \mu_{2,b} \le \ldots\} $
and let $\psi_{1,b}$ be the positive normalized eigenfunction associated with $\mu_{1,b}$.
By the Kato-Rellich theorem, for $q\in L^\infty(\Omega)$,
\[ T_q : H^2_b \subset \hY \to \hY, \quad v \mapsto - \Delta_b \, v - q \, v \]
is also self-adjoint. A boundary condition is {\it standard} if the  conditions below hold.
\begin{enumerate}
\item The smallest eigenvalue $\lambda^q_1$ of $T_q$
is  simple (hence isolated).
\item There is a unique  $L^2$-normalized  eigenfunction $\phi^q_1 > 0$ associated with  $\lambda^q_1$.
\item  On  bounded sets of potentials $q$, the sup norm of $\phi^q_1$ is uniformly bounded.
\end{enumerate}
The boundary condition is implicit in the notation  $\lambda^q_1$ and $\phi^q_1$. We recall some basic facts from  spectral theory and elliptic regularity (\cite{GT}, \cite{N}).

\begin{prop} \label{prop:lambdacont} Dirichlet, Neumann and periodic boundary conditions are standard. For standard boundary conditions and $q \le M < \mu_{2,b}$, $T_q$ is not invertible if and only if $\lambda^q_1 = 0$.
\end{prop}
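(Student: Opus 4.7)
The plan is to treat the two assertions of the proposition separately, using standard elliptic machinery.

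For the first assertion, that Dirichlet, Neumann and periodic conditions are standard, the key points are as follows. In all three settings, $-\Delta_b$ has compact resolvent on $\hY$, so by Kato--Rellich the perturbation $T_q$ is self-adjoint with compact resolvent for any $q \in L^\infty(\Omega)$. To obtain simplicity of the principal eigenvalue and positivity of $\phi^q_1$ (properties (1) and (2)), the plan is to shift the potential: choose $c > 0$ so large that $T_q + c$ is positive-definite and invertible, with a resolvent that maps the cone of nonnegative $L^2$ functions into its interior, and then apply the Krein--Rutman theorem to this compact positivity-improving operator. The resulting principal eigenfunction is nonnegative, lies in $H^2_b$ by elliptic regularity, and is strictly positive in the interior of $\Omega$ by the strong maximum principle (for Dirichlet) or by Harnack's inequality (for Neumann and periodic conditions, where the maximum can also be attained on $\partial \Omega$ or along identifications). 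For property (3), observe that on any bounded subset of $L^\infty(\Omega)$, the principal eigenvalue $\lambda^q_1$ stays bounded (test the Rayleigh quotient against a fixed admissible function), so the equation $-\Delta_b \phi^q_1 = (\lambda^q_1 + q)\phi^q_1$ has uniformly $L^2$-bounded right-hand side; Moser iteration then yields the uniform $\|\phi^q_1\|_{L^\infty}$ bound.

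For the second assertion, one direction is immediate: if $\lambda^q_1 = 0$ then $\phi^q_1 \in \Ker T_q$, so $T_q$ is not invertible. Conversely, suppose $T_q$ fails to be invertible. Since $T_q$ is self-adjoint with compact resolvent, $0$ is then an eigenvalue $\lambda^q_k$ for some $k$. The pointwise inequality $q \le M$, together with the quadratic form identity $\langle T_q v, v \rangle = \int |\nabla v|^2 - q v^2$, gives $T_q \ge -\Delta_b - M$ as quadratic forms, and hence, by the min-max characterization, $\lambda^q_k \ge \mu_{k,b} - M$ for every $k$. In particular $\lambda^q_2 \ge \mu_{2,b} - M > 0$, so no eigenvalue $\lambda^q_k$ with $k \ge 2$ can vanish. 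Consequently $0 = \lambda^q_1$.

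The step I expect to be the main obstacle is the uniform $L^\infty$ bound in property (3): in dimension $n \ge 4$ the direct $H^2 \hookrightarrow L^\infty$ embedding is unavailable, so one must run Moser iteration (or invoke a De Giorgi--Nash--Moser estimate) with constants depending only on $\|\lambda^q_1 + q\|_{L^\infty}$ and the geometry of $\Omega$. A secondary delicate point is extending the strict positivity of $\phi^q_1$ up to $\overline{\Omega}$ for Neumann and periodic conditions, where the Hopf boundary lemma does not directly apply but a Harnack inequality on a finite cover of $\overline{\Omega}$ (or on the torus, in the periodic case) closes the argument.
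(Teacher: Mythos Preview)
Your proposal is correct. The paper, however, does not actually prove this proposition: it introduces it with the sentence ``We recall some basic facts from spectral theory and elliptic regularity'' and cites \cite{GT} and \cite{N}, treating the statement as background material. So there is no proof in the paper to compare against; your sketch simply fills in what the authors leave to the references. The Krein--Rutman argument for properties (1) and (2), the Moser iteration for the uniform sup bound in (3), and the min--max comparison $\lambda^q_k \ge \mu_{k,b} - M$ for the second assertion are exactly the standard ingredients one would extract from those references, and your identification of the $L^\infty$ bound in high dimension as the only genuinely nontrivial step is accurate.
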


Set $B^2_b= C^{2,\alpha}(\Omega) \cap H^2_b$.
The differentiability of $F$ is also well known (\cite{AP2},\cite{ChT2}).

\begin{prop} \label{prop:smooth}  For a smooth function $f: \RR \to \RR$ the map $F:B^2_b \to \bY$ is smooth. If $f$ satisfies hypothesis $(1)$, then
$F:H^2_b \to\hY $ is a $\CCC^1$ map. In both cases, if $f'(u) \le M < \mu_{2,b}$, the differential $DF(u) v = -\Delta_b v - f' (u) v$  is always a Fredholm operator of index zero, with kernel of dimension at most one.
\end{prop}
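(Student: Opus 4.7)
Since $-\Delta_b : B^2_b \to \bY$ is bounded and linear, I reduce the first assertion to smoothness of the substitution operator $u \mapsto f\circ u$ from $B^2_b$ to $\bY$, which is standard (cf.\ \cite{AP2, ChT2}). The two structural ingredients are that $C^{0,\alpha}(\overline\Omega)$ is a Banach algebra and that bounded sets of $B^2_b$ embed into $L^\infty$ with a uniform sup-norm bound. Taylor expansion with integral remainder
\[
f(u+v) - \sum_{j=0}^{k}\frac{1}{j!}\,f^{(j)}(u)\,v^j \;=\; \frac{v^{k+1}}{k!}\int_0^1 (1-s)^k f^{(k+1)}(u+sv)\,ds
\]
then supplies the control needed to verify differentiability of all orders: the integrand stays uniformly bounded in $\bY$-norm on a neighborhood of $u$ via the algebra property, and continuity of $u \mapsto f^{(j)}(u)$ in $\bY$ follows from uniform continuity of $f^{(j)}$ on compact intervals.

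\textbf{The $\CCC^1$ claim on Sobolev spaces.} Under hypothesis $(1)$, $f'$ is globally bounded by $M$, so $|f(x)| \le C(1+|x|)$ and $f\circ u\in\hY$ for $u\in\hY$. The candidate derivative $DF(u)v = -\Delta_b v - f'(u)v$ is bounded $H^2_b \to \hY$ because $f'(u)\in L^\infty$. For Fréchet differentiability I would combine
\[
f(u+v) - f(u) - f'(u)v \;=\; v\int_0^1 \bigl(f'(u+sv)-f'(u)\bigr)\,ds
\]
with a dominated-convergence argument: along any sequence $v_n\to 0$ in $H^2_b$ one passes to a subsequence converging pointwise a.e.; the integrand is uniformly bounded by $2M$ and tends to $0$ a.e., so the $\hY$-norm of the remainder is $o(\|v_n\|_{H^2_b})$. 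Continuity of the derivative in $u$ is obtained by the same principle applied to $v\mapsto (f'(u) - f'(u_0))v$, using Rellich compactness to extract a.e.-convergent subsequences.

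\textbf{Fredholm property and kernel bound.} Pick $c>0$ so that $-\Delta_b + c : H^2_b\to\hY$ is an isomorphism and factor
\[
DF(u) \;=\; (-\Delta_b+c)\,\bigl[\,I - (-\Delta_b+c)^{-1}(c + f'(u))\,\bigr].
\]
Multiplication by the bounded function $c+f'(u)$ acts continuously on $\hY$, while $(-\Delta_b+c)^{-1}:\hY\to H^2_b \hookrightarrow \hY$ is compact by Rellich. Hence the bracket is $I - K$ with $K$ compact, and $DF(u)$ is the composition of an isomorphism with a Fredholm operator of index zero, hence itself Fredholm of index zero. For the kernel dimension, $DF(u) = T_{f'(u)}$ is self-adjoint with eigenvalues $\lambda^{f'(u)}_1 < \lambda^{f'(u)}_2 \le \cdots$, and the min-max characterization combined with $f'(u)\le M$ yields
\[
\lambda^{f'(u)}_2 \;\ge\; \mu_{2,b} - M \;>\; 0.
\]
Thus $0$ can lie in $\sigma(T_{f'(u)})$ only as the simple first eigenvalue (simplicity being part of the standard-boundary-condition hypothesis, recalled in Proposition \ref{prop:lambdacont}), so $\dim\ker DF(u)\le 1$.

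\textbf{Main obstacle.} The most delicate step is the $\CCC^1$ assertion, since hypothesis $(1)$ only bounds $f'$ (and not $f''$ at bounded arguments), so a naive pointwise estimate of the remainder by $\|f''\|_\infty\,v^2$ is unavailable in full generality; one must execute the dominated-convergence argument with care, exploiting Rellich to pass to subsequences. The Hölder smoothness is classical once the Banach-algebra structure is invoked, and the Fredholm plus kernel-bound statement is an immediate consequence of Rellich compactness combined with the min-max principle.
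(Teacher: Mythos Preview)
The paper does not supply its own proof of this proposition: it simply records the statement as ``well known'' and defers to \cite{AP2} and \cite{ChT2}. Your write-up is therefore not competing with an in-paper argument but filling in the details behind those citations, and it does so correctly along the standard lines.

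A few minor comments. In your Fredholm factorization, the bracket $I - (-\Delta_b+c)^{-1}(c+f'(u))$ as written is an operator on $H^2_b$, whereas your justification of compactness describes $K$ as acting on $\hY$; either placement works (multiplication by a bounded function composed with the compact embedding $H^2_b\hookrightarrow\hY$ is compact, and $(-\Delta_b+c)^{-1}$ then returns to $H^2_b$), but you should make the domains match. In the $\CCC^1$ argument, the subsequence trick you outline is exactly right: normalize $w_n=v_n/\|v_n\|_{H^2_b}$, use Rellich to get $w_n\to w$ in $L^2$, use $v_n\to 0$ a.e.\ (along a further subsequence) to force $g_n=\int_0^1(f'(u+sv_n)-f'(u))\,ds\to 0$ a.e.\ with $|g_n|\le 2M$, and then split $\|w_ng_n\|_{L^2}\le 2M\|w_n-w\|_{L^2}+\|wg_n\|_{L^2}$ and apply dominated convergence to the second term. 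This is precisely the care your ``Main obstacle'' paragraph flags, and it is the substance of the references the paper cites. The min--max bound $\lambda_2^{f'(u)}\ge\mu_{2,b}-M>0$ together with Proposition~\ref{prop:lambdacont} is indeed all that is needed for the kernel estimate.
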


In $\Hzb = L^2(\Omega) \cap L^\infty(\Omega)$ consider the  $L^2$-inner product $\langle u, v \rangle$ --- notice that $Z$ is not a Banach space. A sequence $\{ u_m, m \in \NN\} \subset \Hzb$ is {\it $ub$-convergent}, $ u_m \, \ub \ u_\infty \,  , $
if $u_m \to u_\infty$ in $L^2$ and $\{ u_m, m \in \NN\}$ is bounded in the $L^\infty$ norm. Given a metric space $\MM$, a function  $G: X \subset \Hzb \to \MM$ is $ub$-continuous if it takes $ub$-convergent sequences  to convergent sequences in $\MM$. In particular, if $X$ is bounded in the sup norm, $G$ is continuous.
\bigskip

For a smooth function $f: \RR \to \RR$ we are interested in  potentials of the form $q = f' (u)$. For a standard boundary condition, let $\lambda_1(u) = \lambda^q_1$ denote the smallest eigenvalue of the Jacobian $DF(u): H^2_b \to H^0$ and $\phi_1(u) = \phi^q_1$ is the associated positive $L^2$-normalized eigenvector, which by standard regularity results is necessarily in $B^2_b$ if $u$ is.
\medskip

\begin{prop} \label{prop:fantasma}
The functions  $\lambda_1: B^2_b \to \RR$ and $\phi_1: B^2_b \to \hY$ are smooth. The extensions
$\lambda_1: \Hzb \to \RR $ and  $\phi_1: \Hzb \to \hY $ are $ub$-continuous.
\end{prop}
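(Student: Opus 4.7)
The plan is to apply the implicit function theorem for smoothness on $B^2_b$, and to deduce $ub$-continuity by a compactness argument leveraging the standard-boundary-condition hypotheses; the main obstacle will be identifying the weak subsequential limit as the principal eigenpair. For the first part, I would apply IFT at $(u_0, \lambda_1(u_0), \phi_1(u_0))$ to
$$ G(u,\lambda,\phi) = \bigl( -\Delta_b \phi - f'(u)\phi - \lambda\phi,\ \langle \phi,\phi\rangle - 1 \bigr) : B^2_b \times \RR \times B^2_b \to \bY \times \RR. $$
Since $f$ is smooth the Nemytskii operator $u \mapsto f'(u)$ is smooth $B^2_b \to \bY$, so $G$ is smooth. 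The partial derivative in $(\lambda,\phi)$ is $(L,\psi)\mapsto (K\psi - L\phi_1(u_0),\, 2\langle \phi_1(u_0),\psi\rangle)$ where $K = T_{f'(u_0)} - \lambda_1(u_0) I$ is self-adjoint Fredholm of index zero with one-dimensional kernel spanned by $\phi_1(u_0)$ (by simplicity and isolation). The Fredholm alternative fixes $L$ from the prescribed image in $\bY$, determines $\psi$ uniquely up to $\Ker K$, and the scalar constraint fixes the remaining multiple, making this derivative an isomorphism. IFT then produces smoothness of $u \mapsto (\lambda_1(u), \phi_1(u))$ into $\RR \times B^2_b$, hence into $\RR \times \hY$ after composition with the inclusion.

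For $ub$-continuity, assume $u_m \ub u_\infty$ and set $q_m = f'(u_m)$. Continuity of $f'$ on the common sup-norm ball and dominated convergence give $\|q_m\|_\infty \le C'$ and $q_m \to q_\infty$ in $L^2$. Setting $\lambda_m = \lambda_1(u_m)$ and $\phi_m = \phi_1(u_m)$, clause (3) of standard boundary conditions supplies a uniform bound $\|\phi_m\|_\infty \le K$; a Rayleigh quotient with test function $\psi_{1,b}$ bounds $\{\lambda_m\}$ above, while $T_{q_m} \ge -\Delta_b - C'$ bounds it below. Elliptic regularity applied to $-\Delta_b \phi_m = (\lambda_m + q_m)\phi_m$ then bounds $\{\phi_m\}$ in $H^2_b$. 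I would extract a subsequence with $\lambda_m \to \lambda_\ast$ in $\RR$ and $\phi_m$ converging weakly in $H^2_b$ and strongly in $L^2$ to some $\phi_\ast$; splitting $q_m\phi_m - q_\infty\phi_\ast = q_m(\phi_m - \phi_\ast) + (q_m - q_\infty)\phi_\ast$ and using the uniform sup-norm bounds on both factors gives $q_m \phi_m \to q_\infty \phi_\ast$ in $L^2$. Passing to the limit in the eigenvalue equation, $(\lambda_\ast, \phi_\ast)$ is an eigenpair of $T_{q_\infty}$ with $\phi_\ast \ge 0$ and $\|\phi_\ast\|_2 = 1$.

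The delicate step is ruling out that $\phi_\ast$ is a higher eigenfunction. Positivity of the $\phi_m$, preserved under $L^2$-limits, together with unit normalization makes $\phi_\ast$ a nonnegative, unit-norm eigenfunction of $T_{q_\infty}$; clauses (1)-(2) of the standard boundary condition (simplicity of the bottom eigenvalue and uniqueness of the positive normalized eigenvector) then force $(\lambda_\ast, \phi_\ast) = (\lambda_1(u_\infty), \phi_1(u_\infty))$. Since every subsequence of $\{(\lambda_m, \phi_m)\}$ admits a further subsequence converging to this same limit, the full sequences converge, yielding $ub$-continuity of the extensions $\lambda_1 : \Hzb \to \RR$ and $\phi_1 : \Hzb \to \hY$.
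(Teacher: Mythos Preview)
Your proof is correct, but your route for $ub$-continuity differs from the paper's. For smoothness on $B^2_b$ both arguments are essentially the same: the paper invokes an abstract analytic--perturbation result (Proposition~\ref{prop:abstract}, from Zeidler) which is precisely the implicit function theorem you apply by hand. For the $ub$-continuity, however, the paper does not argue by compactness. Instead it shows that the map $u \mapsto M_{f'(u)}$ from $\Hzb$ into $\mathcal{B}(H^2_b,H^0)$ is $ub$-continuous in the \emph{operator norm}, via H\"older's inequality and the Sobolev embedding $H^2_b \hookrightarrow L^{2s}$ (with $s = n/(n-2)$ for $n\ge 5$), and then simply composes with the continuous eigenpair map $T \mapsto (\lambda^T_1,\phi^T_1)$ furnished by Proposition~\ref{prop:abstract}. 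This factorization is shorter and, notably, does not invoke clause~(3) of the standard boundary condition at all. Your compactness argument is more self-contained---no abstract perturbation lemma is cited---but it leans on elliptic regularity, Rellich's theorem, and clause~(3) for the uniform $L^\infty$ bound on the $\phi_m$; it also requires the (standard but unstated) observation that a nonnegative unit-norm eigenfunction of $T_{q_\infty}$ must correspond to $\lambda_1$, since orthogonality to the strictly positive ground state $\phi_1^{q_\infty}$ would otherwise force it to vanish. Both approaches are valid; the paper's buys brevity and reusability (the same operator-norm continuity feeds later propositions), while yours makes the role of the positivity of the ground state more explicit.
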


\medskip
\noindent The proof is given in Section \ref{subsection:spectrumu}.
\medskip

For $F$ between Sobolev spaces, the functions $\lambda_1$ and $\phi_1$ are not  smooth enough to consider higher singularity theory, in particular cusps.  Sobolev spaces suffice in Section \ref{section:theo3}, which handles folds in a disguised form. For $F$ between H\"older spaces, the smoothness of $\lambda_1$ and $\phi_1$ is used in Section \ref{section:cusp1} when we study cusps.

\subsection{Fibers and asymptotics on fibers} \label{section:fibers}

Locally, the construction of fibers is a Lyapunov-Schmidt decomposition associated with an eigenvector.
Hypothesis $(1)$ provides a {\it global} decomposition.
There are analogous results which hold locally under hypothesis $(H_k)$, and this fact will be used in Section \ref{section:cusp2}. The arguments in this section are valid for boundary conditions for which the smallest eigenvalue of $-\Delta_b: H^2_b \to H^0$ is isolated and simple --- the positivity of the ground state is not needed.

Split $H^2_b$ and $H^0$   orthogonally  into {\it horizontal} and {\it vertical} subspaces,
\[H^2_b =  W^2 \oplus V \ , \quad\hY  =  W^0  \oplus V , \quad V = \langle \psi_{1,b} \rangle, \quad \| \psi_{1,b}\|_\hY =1 \ . \]
For a fixed $z  \in W^0$, the set $\{ z+ s \, \psi_{1,b}\, ,   z \in W^0\, , \ s \in \RR \}$ is a vertical line in the image --- its inverse  under $F$ is a $\it fiber$ $F^z$.
Clearly, the domain $H^2_b$ is a disjoint union of fibers.
Versions of the next result may be found in \cite{BP}, \cite{P}, \cite{ChT2},  \cite{TZ}.

\begin{prop}\label{prop:smoothfiber} Suppose hypothesis $(1)$.
Fibers $F^z$ are indexed by $z \in W^0$ and are parameterized by $t \in \RR$: $u(z,t) = w(z,t) + t \psi_{1,b}$, for a $\CCC^1$ map $(z,t) \mapsto w(z,t) \in W^2$. The map $F: B^2_b \to B^0$  admits similar {\it smooth} fibers: a fiber of $F:H^2_b \to H^0$ with a point in $B^2_b$ is completely in $B^2_b$.
\end{prop}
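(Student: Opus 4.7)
The plan is a standard Lyapunov--Schmidt reduction made global by hypothesis $(1)$. Let $P:H^0\to W^0$ denote the orthogonal projection; by self-adjointness it commutes with the spectral projections of $-\Delta_b$, and in particular carries $H^2_b$ onto $W^2$. Writing $u=w+t\,\psi_{1,b}$ with $w\in W^2$, the equation $P\,F(u)\e z$ becomes
\[ -\Delta_b\, w \e z + P\, f(w+t\,\psi_{1,b}), \qquad w\in W^2. \]
Since $-\Delta_b|_{W^2}$ has spectrum $\{\mu_{2,b},\mu_{3,b},\ldots\}\subset(0,\infty)$, it is invertible with bounded inverse $K:W^0\to W^2$. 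I would work with the equivalent norm $\|w\|_\star:=\|-\Delta_b w\|_{L^2}$ on $W^2$, under which $\|Kg\|_\star\e\|g\|_{L^2}$ and $\|w\|_{L^2}\le\mu_{2,b}^{-1}\|w\|_\star$.

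The fiber equation becomes the fixed-point problem
\[ w \e \mathcal N_{z,t}(w) \, := \, K\bigl[z + P\, f(w+t\,\psi_{1,b})\bigr]. \]
By the mean value theorem, the bound $|f'|\le M$, and the nonexpansiveness of $P$ on $L^2$,
\[ \|\mathcal N_{z,t}(w_1)-\mathcal N_{z,t}(w_2)\|_\star \, \le \, M\,\|w_1-w_2\|_{L^2} \, \le \, \frac{M}{\mu_{2,b}}\,\|w_1-w_2\|_\star, \]
and hypothesis $(1)$ gives $M/\mu_{2,b}<1$. Banach's theorem produces a unique $w(z,t)\in W^2$ for every $(z,t)\in W^0\times\RR$. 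The same estimate shows that the linearization in $w$ of the fiber equation, namely $-\Delta_b - P(f'(u)\,\cdot\,):W^2\to W^0$, differs from the isometry $-\Delta_b:(W^2,\|\cdot\|_\star)\to(W^0,\|\cdot\|_{L^2})$ by an operator of norm $\le M/\mu_{2,b}<1$, and is therefore an isomorphism; the implicit function theorem then provides the announced $C^1$ dependence of $w(z,t)$ on $(z,t)$.

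For the H\"older part, the $W^2$ fixed point $w(z,t)$ already lies in $B^2_b$ whenever $z\in B^0$, by a standard elliptic bootstrap: starting from $w\in H^2$ one places $u$ in $L^p$ via Sobolev embedding, uses the linear growth of $f$ to keep $f(u)\in L^p$, invokes $L^p$-regularity of $-\Delta_b$ and iterates until $u\in C^{0,\alpha}$; then Schauder estimates finally give $w\in C^{2,\alpha}$. This covers both parts of the H\"older statement: if an $H^2_b$-fiber contains a point $u_0\in B^2_b$ then $z\e P\,F(u_0)\in B^0$ and the bootstrap places the entire fiber in $B^2_b$; the $C^1$ parameterization of fibers of $F:B^2_b\to B^0$ then follows from the implicit function theorem in Schauder spaces, whose hypothesis is that $-\Delta_b - P(f'(u)\,\cdot\,)$ be an isomorphism $W^2\cap B^2_b\to W^0\cap B^0$---and this is obtained as a compact perturbation of the Schauder isomorphism $-\Delta_b$, injective by the $H^2$-argument above, hence invertible by Fredholm index zero.

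The main obstacles I foresee are (i) choosing the norm $\|\cdot\|_\star$ so that the contraction ratio is exactly $M/\mu_{2,b}$, which is what turns a local Lyapunov--Schmidt decomposition into the global parameterization demanded here, and (ii) completing the elliptic bootstrap in high dimension, where several iterations of the Sobolev--Schauder chain are needed but no new idea enters.
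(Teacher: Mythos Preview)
Your contraction argument has a gap: hypothesis $(1)$ gives $f'\in[m,M]$, not $|f'|\le M$. The Lipschitz constant of $f$ is $\max(|m|,M)$, and nothing in $(1)$ prevents $|m|>\mu_{2,b}$ (take for instance $m=-100$, $\mu_{1,b}=1$, $M=2$, $\mu_{2,b}=3$), so $\mathcal N_{z,t}$ need not be a contraction in your norm. The standard repair is to center: set $c=(m+M)/2$ and rewrite $F(u)=(-\Delta_b-cI)u-g(u)$ with $g=f-c\,\mathrm{id}$, so that $|g'|\le(M-m)/2$ while $(-\Delta_b-cI)|_{W^2}$ has spectrum in $[\mu_{2,b}-c,\infty)\subset(0,\infty)$. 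The contraction ratio becomes $(M-m)/(2\mu_{2,b}-m-M)$, which is $<1$ precisely because $M<\mu_{2,b}$. The same shift fixes your invertibility claim for the linearization. With this correction the Sobolev half stands and matches the literature the paper cites for that part.

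For the H\"older statement your route differs genuinely from the paper's. You bootstrap: once $z\in B^0$, iterated $L^p$ elliptic regularity and then Schauder force the fixed point $w(z,t)$ into $C^{2,\alpha}$, after which the implicit function theorem in H\"older spaces delivers smooth dependence. The paper instead realizes the $B^2_b$-fibers as orbits of a smooth vector field on $B^2_b$: at regular points the tangent to a fiber is $DF(u)^{-1}\psi_{1,b}$, and this field is normalized and extended smoothly across the critical set. Your approach is more self-contained and sidesteps the delicate extension through critical points; the paper's is quicker once the extension is granted, and casts the fiber structure as an ODE flow, which dovetails with the height-function analysis that follows in Proposition~\ref{prop:hcrit}.
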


The last statement seems to be new and we sketch a proof: at regular points of $F: B^2_b \to B^0$, tangent vectors to the fibers are inverses of $\psi_{1,b} \in B^0$ under $DF(u): B^2_b \to B^0$. One can normalize this vector field and extend it smoothly to the whole space $B^2_b$ --- fibers in $B^2_b$ are the orbits of this vector field.
\medskip

We now consider $F$ along a fiber $F^z = \{u(z,t) \} \subset B^2_b$ ---  the parameter $z$ is kept fixed, and we drop any notational reference to it. The restriction  of $F$ to a fiber is essentially given by the {\it height} $h$ of its images, \[ F(u(t)) = z + h(u(t)) \, \psi_{1,b}, \quad t \in \RR \ ,\]
a smooth map for fibers in $B^2_b$.

The critical set of $F:B^2_b \to B^0$ restricts well to fibers.
More, critical points of the height are described in terms of spectrum of $DF$ (\cite{BP},\cite{ChT2}).

\begin{prop} \label{prop:hcrit}  Let $f: \RR \to \RR$ be smooth and suppose $(1)$.
The derivative $h(u(t))$,  the height of a fiber $u(t)$,   is zero exactly at critical points $u(t_0)$ of $F: B^2_b \to B^0$. The eigenfunction $\phi_1(u(t_0))$ of $DF(u(t_0))$ is a positive multiple of $u'(t_0)$, the tangent vector to the fiber at $u(t_0)$. Finally, there is a strictly positive smooth function $p: B^2_b \to \RR$ for which
\[ \frac{d}{d  t}\,  \, h(u(t)) \,  \e  Dh(u(t)) \, u'(t) \e p(u(t)) \ \lambda_1(u(t)) \, . \]
\end{prop}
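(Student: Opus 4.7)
The plan is to prove three claims in turn: (i) zeros of $\tfrac{d}{dt}h(u(t))$ along a fiber coincide with critical points of $F$; (ii) at such a critical $t_0$, the fiber tangent $u'(t_0)$ is a positive multiple of $\phi_1(u(t_0))$; (iii) globally, $Dh(u)\,u'(u) = p(u)\,\lambda_1(u)$ for a smooth, strictly positive $p: B^2_b \to \RR$.

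I begin by giving the height an intrinsic definition: set $h(u) = \langle F(u), \psi_{1,b}\rangle$ on $B^2_b$, so that along a fiber $u(t) = w(t) + t\psi_{1,b}$ the identity $F(u(t)) = z + h(u(t))\psi_{1,b}$ holds by construction. Differentiating in $t$ gives
\[ DF(u(t))\,u'(t) \; = \; \bigl(Dh(u(t))\,u'(t)\bigr)\,\psi_{1,b}. \]
The parametrization forces $\langle u'(t),\psi_{1,b}\rangle = 1$, so $u'(t)\ne 0$. For (i): if $Dh(u(t))u'(t)=0$, then $u'(t)\in\ker DF(u(t))$ is nonzero, so by Proposition \ref{prop:smooth} the kernel is one-dimensional and $u(t)$ is critical; conversely, at a critical $u(t_0)$, self-adjoint Fredholm theory yields $\operatorname{range} DF(u(t_0)) = \phi_1(u(t_0))^\perp$, so solvability of $DF(u(t_0))u'(t_0)=\alpha\psi_{1,b}$ demands $\alpha\langle\psi_{1,b},\phi_1(u(t_0))\rangle=0$; since $\psi_{1,b},\phi_1(u(t_0))>0$, $\alpha=0$. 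For (ii): at such a critical $t_0$, $u'(t_0)=c\phi_1(u(t_0))$, and the identity $1=\langle u'(t_0),\psi_{1,b}\rangle = c\langle\phi_1(u(t_0)),\psi_{1,b}\rangle$ gives $c>0$.

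For (iii), pairing the previous equation with $\phi_1(u)$ and using self-adjointness of $DF(u)$ yields
\[ h'(t)\,\langle\psi_{1,b},\phi_1(u(t))\rangle \; = \; \langle DF(u(t))u'(t),\phi_1(u(t))\rangle \; = \; \lambda_1(u(t))\,\langle u'(t),\phi_1(u(t))\rangle, \]
which suggests
\[ p(u) \; := \; \frac{\langle u'(u),\phi_1(u)\rangle}{\langle\psi_{1,b},\phi_1(u)\rangle}, \]
where $u'(u)$ denotes the tangent to the fiber through $u$ (well defined since fibers foliate $B^2_b$). By Propositions \ref{prop:smoothfiber} and \ref{prop:fantasma}, both $u'$ and $\phi_1$ depend smoothly on $u$, and the denominator is strictly positive, so $p$ is smooth.

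The main obstacle is global strict positivity of $p$. The numerator $N(u)=\langle u'(u),\phi_1(u)\rangle$ is continuous on the connected Banach space $B^2_b$; if $N(u_0)=0$ for some $u_0$, the displayed identity forces $h'(u_0)=0$, so by (i) $u_0$ is critical and by (ii) $u'(u_0)=c\phi_1(u_0)$ with $c>0$, giving $N(u_0)=c>0$, a contradiction. Hence $N$ is nowhere zero and has constant sign. I pin down the sign by evaluating at $u=0\in B^2_b$: since $f'(0)$ is a scalar, $DF(0)=-\Delta_b - f'(0)$ shares eigenfunctions with $-\Delta_b$, so $\phi_1(0)=\psi_{1,b}$; a short Lyapunov--Schmidt computation on $P_{W^0}F(w(z,t)+t\psi_{1,b})=z$, exploiting that $\mu_{k,b}-f'(0)>0$ for $k\ge 2$ by hypothesis (1), shows $\partial_t w|_{t=0}=0$, so $u'(0)=\psi_{1,b}$ and $N(0)=1>0$. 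Therefore $N>0$ throughout $B^2_b$ and $p>0$ as claimed.
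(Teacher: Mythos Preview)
Your proof is correct and follows essentially the same route as the paper's: both differentiate $F(u(t))=z+h(u(t))\psi_{1,b}$, pair with $\phi_1(u)$ using self-adjointness, and arrive at the same expression $p(u)=\langle u'(u),\phi_1(u)\rangle/\langle\psi_{1,b},\phi_1(u)\rangle$. Your treatment of global strict positivity is actually more complete than the paper's---the paper only checks that the numerator is positive \emph{near critical points} (which is all that is used downstream), whereas you close the gap with the nowhere-vanishing/connectedness argument and the explicit evaluation at $u=0$.
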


\begin{proof} Differentiate the formula for $h$, $DF(u(t)) u' (t) = \big( Dh(u(t))\, u'(t) \big) \psi_{1,b}. $
Since $u'(t)= w'(t) + \psi_{1,b} \ne 0$, both sides are zero if and only if  $u'(t)$ lies in the kernel of $DF(u(t))$. Thus, the critical points of $h(u(t))$ and of $F$ are the same and, at such point $u(t_0)$,  $\ker DF(u(t_0)) =  \langle u'(t) \rangle$ and $u'(t_0) = c\phi_1(u(t_0))$ for some $c \in \RR$. Now, $\langle \phi_1(u(t)), \psi_{1,b} \rangle > 0$, as both functions are positive, and $\langle u'(t_0) , \psi_{1,b} \rangle = \langle w'(t_0) + \psi_{1,b} , \psi_{1,b} \rangle  = \langle \psi_{1,b} , \psi_{1,b} \rangle >0$, so that $c>0$.

Since $\lambda_1$ and $h'$ have common roots in each fiber, it suffices to show that $p$ is well defined in neighborhoods of these roots. Clearly
\[ \langle DF(u(t))  \phi_1(u(t)), u'(t) \rangle = \lambda_1(u(t)) \langle  \phi_1(u(t)), u'(t) \rangle =  h'(u(t)) \langle \phi_1(u(t)), \psi_{1,b} \rangle . \]
At a critical point $u'= c\, \phi_1(u) ,  c>0$  and nearby $ \langle  \phi_1(u(t)), u'(t) \rangle >0$. Thus $p$, a quotient of smooth nonzero inner products, is smooth.
\qed
\end{proof}

\smallskip
For the reader's convenience, we transcribe the argument in \cite{CTZ1} describing the asymptotic behavior of $F$ along a fiber, already obtained in \cite{BP}.

\begin{prop} \label{prop:infinity}
Let $f$ satisfy  $(1)$ and $(4)$. Then on each fiber $u(t) \in B^2_b$,
   \[ \lim_{t \to \pm \infty} h(u(t)) = - \infty. \]
\end{prop}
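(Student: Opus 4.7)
The plan is to recast $h(u(t))$ as a single integral over $\Omega$ weighted by $\psi_{1,b}$, then use hypothesis $(4)$ to force it to $-\infty$. The argument bypasses any need to control the horizontal component $w(t)$ quantitatively: everything reduces to the positivity of the ground state together with a linear minorant for $f(s) - \mu_{1,b}\,s$.

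First, take the $\hY$-inner product of the fiber identity $F(u(t)) = z + h(u(t))\,\psi_{1,b}$ against $\psi_{1,b}$. Self-adjointness of $-\Delta_b$ together with $-\Delta_b \psi_{1,b} = \mu_{1,b}\,\psi_{1,b}$ turns $\langle -\Delta_b u(t),\psi_{1,b}\rangle$ into $\mu_{1,b}\langle u(t),\psi_{1,b}\rangle$. The decomposition $u(t) = w(t) + t\,\psi_{1,b}$ with $\langle w(t),\psi_{1,b}\rangle = 0$, the normalization $\|\psi_{1,b}\|_\hY = 1$, and $\langle z,\psi_{1,b}\rangle = 0$ (as $z \in W^0$) give $\langle u(t),\psi_{1,b}\rangle = t$, so that, on setting $g(s) := f(s) - \mu_{1,b}\,s$,
\[
h(u(t)) \, = \, \mu_{1,b}\,t \, - \, \int_\Omega f(u(t))\,\psi_{1,b}\,dx \, = \, -\int_\Omega g(u(t))\,\psi_{1,b}\,dx .
\]

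Next, hypothesis $(4)$ says $g'(s) \ge \epsilon$ for $s$ large positive and $g'(s) \le -\epsilon$ for $s$ large negative. Integrating $g'$ outward from a reference point and absorbing the values of $g$ on the compact middle interval, one obtains a constant $C \in \RR$ with
\[
g(s) \, \ge \, \epsilon\,|s| + C \quad \hbox{for every } s \in \RR .
\]
Because $\psi_{1,b} \ge 0$ pointwise and $\int_\Omega u(t)\,\psi_{1,b}\,dx = t$, the elementary inequalities $|u(t)| \ge \pm\, u(t)$ integrate to
\[
\int_\Omega |u(t)|\,\psi_{1,b}\,dx \, \ge \, \left| \int_\Omega u(t)\,\psi_{1,b}\,dx \right| \, = \, |t| .
\]
Substituting back,
\[
-h(u(t)) \, \ge \, \epsilon\,|t| \, + \, C\,\|\psi_{1,b}\|_{L^1(\Omega)} ,
\]
so $h(u(t)) \to -\infty$ as $|t| \to \infty$.

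The only step that could deserve explicit care is the passage from the asymptotic slope condition $(4)$ to the global linear minorant for $g$, which is a one-variable calculus exercise on $(-\infty,-R]$, $[-R,R]$, and $[R,\infty)$ for $R$ sufficiently large. Hypothesis $(1)$ enters only tacitly through the existence of the fiber parameterization $u(z,t) = w(z,t) + t\psi_{1,b}$; the mechanism driving $h$ to $-\infty$ is entirely the positivity of $\psi_{1,b}$ combined with the strict slope condition at infinity, which is precisely the content of $(4)$.
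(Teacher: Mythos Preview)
Your proof is correct and follows essentially the same route as the paper: both compute $h(u(t)) = \mu_{1,b}\,t - \int_\Omega f(u(t))\,\psi_{1,b}$, derive from hypothesis $(4)$ affine lower bounds for $f(s)-\mu_{1,b}s$, and exploit $\langle w(t),\psi_{1,b}\rangle = 0$ together with $\psi_{1,b}\ge 0$. The only difference is packaging: the paper uses two separate linear minorants and treats $t\to+\infty$ and $t\to-\infty$ in turn, whereas you combine them into the single bound $g(s)\ge \epsilon|s|+C$ and handle both limits at once via $\int |u(t)|\psi_{1,b}\ge |t|$, which is a mild streamlining but not a different idea.
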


\begin{proof} Since $F(u(t)) = z + h(u(t)) \psi_{1,b}$ for $z \in W^0$ and $W^0  \perp V$,
we have $h(u(t) ) = \langle F(u(t)), \psi_{1,b} \rangle$. Since $F(u) =  - \Delta_b \, u - f(u)$ and $ \ - \Delta_b \, \psi_{1,b}= \mu_{1,b} \psi_{1,b}$,
\[ h(u(t) ) =  \mu_{1,b} \, t - \int_{\Omega} f(u(t)) \, \psi_{1,b} . \]
From hypothesis (4), $f(x)$ is bounded below by two lines,
\[ (\mu_{1,b} - \epsilon)\ x + c_- \ , \quad (\mu_{1,b} + \epsilon)\  x - c_+ \ < \ f(x).  \]
We consider $ t \to \infty$. Since $u(t) = w(t) + t \, \psi_{1,b}$, for $w(t) \in W^2 \perp V$,
\[ h(u(t)) \le \mu_{1,b} t - \int_\Omega \big( (\mu_{1,b} + \epsilon) (w(t) + t \psi_{1,b}) - c_+ \big) \, \psi_{1,b} \le - \epsilon t + c_+ \int_\Omega \psi_{1,b} \]
and we are done ($t \to- \infty$ is similar). \qed
\end{proof}

\subsection{ Two-valued potentials} \label{four}

Denote the usual argument from $\RR^2 \setminus \{0\}$ to $[0,2\pi)$ by $\arg$. The {\it sector} $S(\theta)$ is
\[ S(\theta) = \{ x = (x_1, x_2,\ldots,x_n) \in \RR^n , \quad \arg(x_1,x_2) \le \theta \ \hbox{ or } \ (x_1, x_2) = 0 \,  \} \, . \]
We abuse notation slightly and define $S(0) = \emptyset$ and $S(2\pi) = \RR^n$.

For a fixed $p \in \Omega \subset \RR^n$, we consider the translated sector $p + S(\theta)$ and split
\[ \Omega = ( \Omega \cap (p + S(\theta)) \, ) \, \cup \, ( \Omega \cap (p + S(\theta))^c \, ) = \Omega_\theta \cup \Omega_\theta^c  \]
in disjoint subsets with characteristic functions  $\chi_\theta$ and $\chi^c_\theta$ --- the point $p$ stays fixed and is omitted. For $\theta \in (0, 2\pi)$, both sets have nonzero measure. For the proof of Theorem \ref{Rec}, a family of parallel hyperplanes would suffice. In Section \ref{section:cusp2} (more precisely, Proposition \ref{prop:goodnonfold2}) an appropriate choice of $p$ is convenient.

The set of {\it two-valued functions} is
\[ \overline{{\cal V}} = \{\overline  q(L, R, \theta) = L \, \chi_\theta + R \, \chi^c_\theta, \ L, R \in \RR, \ \theta \in [0, 2\pi]  \, \} \ \subset \  L^\infty(\Omega). \]

We simplify  notation: for $\overline q = \overline q(L, R,\theta) \e L \, \chi_\theta + R \, \chi^c_\theta \in \overline{{\cal V}}$, set
\[ T_{\overline q}: H^2 \to H^0, \, v \mapsto - \Delta_b v - \overline q(L, R,\theta) \, v \, , \]
so that the smallest eigenvalue and positive normalized eigenvector restrict to
\[ \lambda_1^{\overline q}: \RR^2 \times [0, 2\pi] \to \RR \quad \hbox{and} \quad \phi_1^{\overline q}: \RR^2 \times [0, 2\pi] \to H^0 \, .\]

A triple  $(L, R, \theta) \in \RR^2 \times [0, 2\pi] $ is {\it balanced} if $\lambda_1^{\overline{q}} (L, R,\theta) = 0$. The next lemma shows that balancing is frequently feasible.

\medskip

\begin{lemma} \label{lemma:balance} There is a  continuous function
\begin{align*}
\Theta: \big( (-\infty, \mu_{1,b}] \times (\mu_{1,b}, \infty) \big) \cup
\big( (\mu_{1,b}, \infty) \times (-\infty, \mu_{1,b}] \big)  &\to (0, 2\pi) \\ (L, R) &\mapsto \,  \Theta(L, R)
\end{align*}
such that $\lambda_1^{\overline q}(L, R, \Theta(L, R)) = 0$. Also, $\Theta(\mu_{1,b},R) = 2\pi$ and $\Theta(L,\mu_{1,b}) = 0$.
\end{lemma}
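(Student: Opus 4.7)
The plan is to build $\Theta(L,R)$ by an intermediate-value argument in $\theta$ for each fixed $(L,R)$, then upgrade to a continuous choice using strict monotonicity of $\lambda_1^{\overline q}$ in $\theta$.

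First I would record the two endpoint values. At $\theta=0$ the characteristic function $\chi_\theta$ vanishes, so $\overline q(L,R,0)\equiv R$ and hence
\[ \lambda_1^{\overline q}(L,R,0)\e \mu_{1,b}-R\,. \]
At $\theta=2\pi$, $\chi_\theta\equiv 1$ on $\Omega$, so $\overline q(L,R,2\pi)\equiv L$ and
\[ \lambda_1^{\overline q}(L,R,2\pi)\e \mu_{1,b}-L\,. \]
On each of the two rectangles in the domain of $\Theta$, these boundary values have opposite (nonstrict) signs: on $(-\infty,\mu_{1,b}]\times(\mu_{1,b},\infty)$ we have $\mu_{1,b}-R<0\le\mu_{1,b}-L$, and on $(\mu_{1,b},\infty)\times(-\infty,\mu_{1,b}]$ the opposite. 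This already takes care of the boundary prescriptions $\Theta(\mu_{1,b},R)=2\pi$ and $\Theta(L,\mu_{1,b})=0$.

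Next I would establish continuity and strict monotonicity of $\theta\mapsto\lambda_1^{\overline q}(L,R,\theta)$ for fixed $(L,R)$ with $L\ne R$. Continuity is immediate: as $\theta$ varies, $\overline q(L,R,\theta)$ is $ub$-continuous into $\Hzb$ (the measure of the symmetric difference of sectors tends to zero and the values $L,R$ are fixed), so Proposition \ref{prop:fantasma} applies. For monotonicity I would use the Rayleigh quotient
\[ \lambda_1^{\overline q}(L,R,\theta)\e \inf_{\|v\|_{H^0}=1}\Big(\int_\Omega|\nabla v|^2-\int_\Omega \overline q(L,R,\theta)\,v^2\Big)\,. \]
If, say, $L<R$, enlarging $\theta$ enlarges $\Omega_\theta$ and on the gained region replaces the value $R$ by the smaller value $L$, so $\overline q$ decreases pointwise and $\lambda_1^{\overline q}$ therefore is nondecreasing. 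Strictness follows by plugging the positive eigenfunction $\phi_1^{\overline q(L,R,\theta_1)}$ (positive a.e.\ on $\Omega$ for standard boundary conditions) into the Rayleigh quotient at $\theta_2>\theta_1$: the integrand over the newly gained sector contributes a strictly negative term $(L-R)\int_{\Omega_{\theta_2}\setminus\Omega_{\theta_1}}\phi_1^2$, so $\lambda_1^{\overline q}(L,R,\theta_2)<\lambda_1^{\overline q}(L,R,\theta_1)$ when combined with the usual min-max comparison. The case $L>R$ is symmetric.

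With continuity and strict monotonicity in $\theta$, the intermediate value theorem produces a unique $\Theta(L,R)\in(0,2\pi)$ for each $(L,R)$ in the open part of the domain with $L<\mu_{1,b}<R$ or $R<\mu_{1,b}<L$. Uniqueness plus continuity of the two-variable map $(L,R,\theta)\mapsto \lambda_1^{\overline q}$ (again from Proposition \ref{prop:fantasma}, since $\overline q$ is $ub$-continuous in all three parameters jointly) gives continuity of $\Theta$: if $(L_n,R_n)\to(L,R)$ and $\Theta(L_n,R_n)\to\theta^\ast$ along a subsequence, then $\lambda_1^{\overline q}(L,R,\theta^\ast)=0$, forcing $\theta^\ast=\Theta(L,R)$. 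The boundary limits $\Theta(\mu_{1,b},R)=2\pi$ and $\Theta(L,\mu_{1,b})=0$ follow from this same argument together with the explicit endpoint values of $\lambda_1^{\overline q}$ computed above.

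I expect the main obstacle to be the strict monotonicity in $\theta$, since it relies on knowing that the positive eigenfunction $\phi_1^{\overline q}$ does not vanish on an open set of $\Omega$; this is where the standing assumption of a standard boundary condition (with its accompanying positivity/simplicity of the ground state) is doing real work, together with unique continuation or elliptic positivity for $-\Delta_b-\overline q$. Everything else is essentially an IVT plus $ub$-continuity package.
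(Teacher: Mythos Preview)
Your approach is essentially the same as the paper's: compute the endpoint values $\lambda_1^{\overline q}(L,R,0)=\mu_{1,b}-R$ and $\lambda_1^{\overline q}(L,R,2\pi)=\mu_{1,b}-L$, invoke $ub$-continuity of the eigenvalue to get existence via IVT, use strict monotonicity of the ground-state eigenvalue in the potential (via the Rayleigh quotient and positivity of $\phi_1^{\overline q}$) to get uniqueness, and deduce continuity of $\Theta$ from uniqueness plus joint continuity.

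There is one slip in your strictness step. With $L<R$ and $\theta_1<\theta_2$ you correctly argue that $\overline q$ decreases pointwise, hence $\lambda_1^{\overline q}$ is \emph{nondecreasing} in $\theta$; but then you plug $\phi_1^{\overline q(L,R,\theta_1)}$ into the Rayleigh quotient at $\theta_2$ and conclude $\lambda_1^{\overline q}(L,R,\theta_2)<\lambda_1^{\overline q}(L,R,\theta_1)$, which contradicts what you just said. The trouble is that plugging a test function into the quotient for $\theta_2$ gives only an \emph{upper} bound on $\lambda_1^{\overline q}(L,R,\theta_2)$, so this direction of substitution proves nothing. The fix (which is exactly what the paper does) is to plug the eigenfunction at $\theta_2$ into the quotient at $\theta_1$: writing $\phi=\phi_1^{\overline q(L,R,\theta_2)}$,
\[
\lambda_1^{\overline q}(L,R,\theta_2)=Q_{\theta_2}(\phi)=Q_{\theta_1}(\phi)+(R-L)\int_{\Omega_{\theta_2}\setminus\Omega_{\theta_1}}\phi^2\;>\;Q_{\theta_1}(\phi)\;\ge\;\lambda_1^{\overline q}(L,R,\theta_1),
\]
using $\phi>0$ a.e.\ and the positive measure of the gained sector. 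With this correction your argument goes through.
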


\smallskip
\begin{proof} Clearly, the map
$\iota: (L, R,\theta) \in \RR^2 \times [0, 2\pi] \mapsto  \overline q(L, R,\theta) \in \Hzb$ is continuous, so that $ \lambda_1^{\overline q}: \RR^2 \times [0, 2\pi] \to \RR$ and $\phi_1^{\overline q}: \RR^2 \times [0, 2\pi] \to H^0$ are too:
proceed as in  the proof of Proposition  \ref{prop:fantasma} with the composition $\Phi \circ \iota$.

Suppose that $L \le \mu_{1,b} < R$, the other case is similar.
We have
\[ \lambda_1^{\overline q}(L, R,0) = \mu_{1,b} - R < 0 \quad \hbox{ and } \quad \lambda_1^{\overline q} (L, R,2\pi) = \mu_{1,b} - L > 0 \, , \, \hbox{for} \, L \ne  \mu_{1,b} \, , \]
\[ \lambda_1^{\overline q} (\mu_{1,b},R,2\pi) =  0 \quad \hbox{ and } \quad \lambda_1^{\overline q} (\mu_{1,b},R,\theta) \le 0 \, , \, \hbox{for} \, \theta \in [0,2\pi) \, . \]
Thus, for a given $L \in (-\infty, \mu_{1,b}]$, a balancing $\Theta$ exists by continuity in $\theta$ of $\lambda_1^{\overline q}$. We now show uniqueness and continuity in $L$.

For $L_1 \le L_2$, $R_1 \le R_2$ and $0< \theta_1 \le \theta_2 < 2 \pi$, consider points $(L_1,R_1,\theta_1)$ and $(L_2,R_2,\theta_2)$ associated with potentials $\overline q_i$, operators $T_{\overline q_i}$ and  quadratic forms $Q_i(v) = \langle v, T_{\overline q_i} v \rangle$, for $i=1,2$. Clearly, $\overline q_1 \le \overline q_2$ pointwise a.e.,  $\lambda_1^{\overline q_1} \ge \lambda_1^{\overline q_2}$,   $\lambda_1^{\overline q_i} = Q_i(\phi_1^{\overline q_i})$ and $\phi_1^{\overline q_i} >0$ in $\Omega$. Also, $T_{\overline q_1} = T_{\overline q_2} + q_+$, for a potential $q_+ \ge 0$. If $(L_1,R_1,\theta_1)$ and $(L_2,R_2,\theta_2)$ are distinct, $q_+ \not \equiv 0$ and
\[ \lambda_1^{\overline q_1} = Q_1(\phi_1^{\overline q_1}) = Q_2(\phi_1^{\overline q_1}) + \langle q_+ \, \phi_1^{\overline q_1}, \phi_1^{\overline q_1} \rangle  > Q_2(\phi_1^{\overline q_1}) \ge \lambda_1^{\overline q_2} \, .\]
Thus $\lambda_1^{\overline q}$ is strictly monotonic on each coordinate and $\Theta(L, R)$ is well defined for $(L, R,\theta) \in  (-\infty, \mu_{1,b})\times (\mu_{1,b},\infty) \times (0, 2\pi)$. Similarly, $\Theta$ is strict monotonic along the segment $(\mu_{1,b},R,\theta)  \, , \, \theta \in [0,2\pi]$, so that  $\lambda_1^{\overline q} (\mu_{1,b},R,\theta) < 0 \, , \, \theta \in [0,2\pi)$, enforcing $\Theta(\mu_{1,b},R) = 2\pi$. Finally, from the continuity of $\lambda_1^{\overline q}$, $\Theta$  is continuous. \qed
\end{proof}

\medskip
Throughout the text, we deal with two-valued functions $\overline u(\ell,r) = \ell  \, \chi_\theta + r \, \chi^c_\theta$, giving rise to potentials
\[ q(L,R) = f'(\overline u(\ell,r)) = f'(\ell)  \, \chi_\theta + f'(r) \, \chi^c_\theta = L  \, \chi_\theta + R \, \chi^c_\theta \, . \]
We use capital and lower letters to specify the different nature of the quantities. Abusing notation slightly, we say in this case that $(\ell,r,\theta)$ is balanced if the potential associated with $\big(\, f'(\ell),\, f'(r), \, \theta \, \big)$ is.

\subsection{A point with four preimages  } \label{subsection:psi}

As stated in the sketch of proof above, we search for a critical point $u(t_0)$ for which the height $h$ along its fiber $u(t)$ has a strict local minimum.
To classify the local extremum we use an additional derivative, by slightly amplifying Proposition \ref{prop:fantasma}.

\begin{prop}\label{prop:derlambda} Let $f$ satisfy hypothesis $(1)$. The
derivative of the smooth function $\lambda_1: B^2_b\to \RR$ along $v \in B^2_b$ is
\[ D \lambda_1(u) \ v =  \langle \nabla \lambda_1(u) , v \rangle= - \int_\Omega f''(u) \phi_1^2(u)\ v \, , \quad \nabla \lambda_1(u) = - f''(u) \phi_1^2(u) \in \Hzb \,   .\]
The extension $\lambda_1: \Hzb \to \RR$ admits Gateaux derivatives along  $v \in \Hzb$ given by the same formulas.
The map $\nabla \lambda_1: \Hzb \to \Hzb, \ u \mapsto \nabla \lambda_1(u)$ is $ub$-continuous.
\end{prop}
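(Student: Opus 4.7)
The plan is to obtain the derivative formula by the classical perturbation argument for simple eigenvalues, and then to bootstrap the statement on $B^2_b$ (where smoothness is already in hand from Proposition \ref{prop:fantasma}) to a Gateaux statement on $\Hzb$. The last clause, the $ub$-continuity of $\nabla\lambda_1$, is a direct consequence of the $ub$-continuity of $\phi_1$ from Proposition \ref{prop:fantasma} combined with the uniform $L^\infty$ bound on $\phi_1$ built into the definition of standard boundary condition.

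First, for $u \in B^2_b$, differentiate the eigenvalue identity
\[ -\Delta_b \phi_1(u) - f'(u)\,\phi_1(u) \e \lambda_1(u)\, \phi_1(u) \]
along a direction $v \in B^2_b$. Writing $\phi_1'(u)v$ and $\lambda_1'(u)v$ for the derivatives, one obtains
\[ -\Delta_b(\phi_1'(u)v) - f'(u)\,\phi_1'(u)v - f''(u)\,v\,\phi_1(u) \e \lambda_1'(u)v\,\phi_1(u) + \lambda_1(u)\,\phi_1'(u)v . \]
Pairing with $\phi_1(u)$, using self-adjointness of $DF(u)$ on $H^2_b$ and the normalization $\langle \phi_1(u),\phi_1(u)\rangle = 1$, the terms involving $\phi_1'(u)v$ cancel, leaving $D\lambda_1(u)v = - \int_\Omega f''(u)\phi_1^2(u)\,v$. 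Since $u\in B^2_b$ is bounded and $\phi_1(u)\in B^2_b\subset L^\infty$, the function $-f''(u)\phi_1^2(u)$ lies in $\Hzb$ and represents the gradient in the $L^2$ pairing.

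For the extension to $\Hzb$, fix $u, v \in \Hzb$ and restrict to the affine line $s \mapsto u + s v$; the potentials $q(s) = f'(u+sv)$ form a $ub$-continuous family in $\Hzb$ (indeed a norm-continuous family in $L^\infty$, since $u$ and $v$ are bounded and $f'$ is locally Lipschitz). Standard Kato-Rellich perturbation theory for the simple isolated eigenvalue $\lambda_1^{q(s)}$ gives smoothness in $s$, and the same computation as above with $v$ replaced by the tangent of the line yields the stated formula for the Gateaux derivative. The ingredients needed, namely $\phi_1(u+sv) \in L^2\cap L^\infty$ with $\|\phi_1(u+sv)\|_2=1$ and self-adjointness of $T_{q(s)}$, are furnished by the definition of standard boundary condition and Proposition \ref{prop:fantasma}.

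For the $ub$-continuity of $\nabla\lambda_1: \Hzb \to \Hzb$, suppose $u_m \ub u_\infty$. By Proposition \ref{prop:fantasma}, $\phi_1(u_m) \to \phi_1(u_\infty)$ in $L^2$, and by item (3) of the standard boundary condition the $\phi_1(u_m)$ are uniformly bounded in $L^\infty$; together with the uniform $L^\infty$ bound on $\{u_m\}$ and the continuity of $f''$, dominated convergence yields $f''(u_m) \to f''(u_\infty)$ in $L^2$ while remaining uniformly bounded in $L^\infty$. Writing
\[ f''(u_m)\phi_1^2(u_m) - f''(u_\infty)\phi_1^2(u_\infty) \e \bigl(f''(u_m)-f''(u_\infty)\bigr)\phi_1^2(u_m) + f''(u_\infty)\bigl(\phi_1(u_m)+\phi_1(u_\infty)\bigr)\bigl(\phi_1(u_m)-\phi_1(u_\infty)\bigr) \]
and estimating each factor in $L^2$ or $L^\infty$ as appropriate gives $L^2$ convergence of $\nabla\lambda_1(u_m)$ to $\nabla\lambda_1(u_\infty)$, while the product of three $L^\infty$-bounded factors is uniformly $L^\infty$-bounded. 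This is precisely $ub$-convergence in $\Hzb$.

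The main obstacle is cosmetic rather than substantive: the potential $q=f'(u)$ for $u\in\Hzb$ lies only in $L^\infty$ (not in any Hölder space), so one cannot invoke Fréchet differentiability in a Banach setting on $\Hzb$; the trick is to recognize that a Gateaux statement along a fixed direction reduces the problem to one-parameter analytic perturbation theory, where the $L^\infty$ bound suffices.
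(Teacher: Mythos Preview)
Your proof is correct and follows essentially the same route as the paper: differentiate (or take a difference quotient of) the eigenvalue identity, pair with $\phi_1$, use self-adjointness to kill the $\phi_1'$ terms, and then handle $ub$-continuity by splitting the product and invoking the uniform $L^\infty$ bounds on $\phi_1$ together with dominated convergence. The only cosmetic difference is that for $u\in\Hzb$ the paper carries out the difference-quotient limit by hand using only the $ub$-continuity of $\phi_1$, whereas you package that step as one-parameter analytic perturbation theory; both are valid, and your splitting for $\nabla\lambda_1(u_m)-\nabla\lambda_1(u_\infty)$ is algebraically equivalent to the paper's.
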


For the proof see Section \ref{subsection:spectrumu}. Define the $ub$-continuous map
\[ \delta_1: \Hzb \to \RR \,  ,\quad u \, \mapsto \,    D \lambda_1(u) \phi_1(u) \, = \, - \langle \ f ''(u) \ \phi_1(u)^2 , \ \phi_1(u)  \ \rangle \, . \]
Say $r \sim s$ if both numbers $r$ and $s$ have the same sign.
We  search for a function $u(t_0)$ for which, from Proposition \ref{prop:hcrit},
\[ \frac{d}{dt} \, h(u(t))|_{t= t_0} = p(u(t_0)) \ \lambda_1(u(t_0)) \, = \, 0 \, ,\]
\[ \frac{d^2}{dt^2} \, h (u(t))|_{t= t_0} \, = \, D p(u(t_0)) u'(t_0) \ \lambda_1(u(t_0)) + p(u(t_0)) \ D \lambda_1(u(t_0)) u'(t_0) \]\[
\, \sim  D\lambda_1(u(t_0)) u'(t_0) \, \sim \, \delta_1(u(t_0)) \, > \, 0 \, . \]

For a given  standard boundary condition, let $D \subset B^2_b$ be a dense subset of $\Hzb$.
In Propositions \ref{prop:psipos} and \ref{prop: molly}, we find respectively a two-valued function $\overline u \in H^0$ and then $u \in D$ which are zeros of $ \lambda_1$  and on which $ \delta_1$  takes positive values.

\medskip

The sequences $\{ x_m ^{ \pm}\}$ in the next lemma play the role of almost critical points.

\begin{lemma} \label{lemma:PS} Let $g: \RR \to \RR$ be a smooth Morse function with bounded image containing an interior point $\mu$.  Then there are sequences $x_m^+, x_m^- \in \RR$  satisfying the following properties.
\begin{enumerate}
\item $\lim_{m \to \infty}  g' (x_m^+) = \lim_{m \to \infty} g'(x_m^-) = 0 \, , \quad  g'(x_m^+), \  g'(x_m^-) \ne 0 \,  ,$
\item $\lim_{m \to \infty} g(x_m^+) = R^+ > \mu \,  , \ \lim_{m \to \infty} g(x_m^-) = R^- < \mu \, .$
\end{enumerate}

\end{lemma}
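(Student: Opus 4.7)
I build the sequence $x_m^+$ only; the sequence $x_m^-$ follows by applying the same argument to $-g$ with $-\mu$ in place of $\mu$. Set $R^+ := \sup_{\RR} g$. Because $g(\RR)$ is bounded and $\mu$ lies in its interior, some value of $g$ strictly exceeds $\mu$, so $R^+ > \mu$. The strategy is to first produce $\{w_m\}$ with $g(w_m) \to R^+$ and $g'(w_m) \to 0$, and then to shift slightly off the zero set of $g'$ using the Morse hypothesis.

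\textbf{Case 1: $R^+$ is attained at some $x^\ast$.} Then $x^\ast$ is a nondegenerate local maximum with $g'(x^\ast) = 0$ and $g''(x^\ast) < 0$, so $g'$ is injective and nonzero on a punctured neighborhood of $x^\ast$. Taking $x_m^+ := x^\ast + 1/m$ for $m$ large yields all three required conditions directly.

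\textbf{Case 2: $R^+$ is not attained.} For each $m$ choose $y_m$ with $g(y_m) > R^+ - 1/m$, and let $C_m$ be the connected component of the open set $\{g > R^+ - 1/m\}$ containing $y_m$. Pass to a subsequence so that either every $C_m$ is bounded, or every $C_m$ is unbounded. In the bounded sub-case, $C_m = (a_m, b_m)$ and continuity forces $g(a_m) = g(b_m) = R^+ - 1/m$, so $g$ attains its maximum on $[a_m, b_m]$ at some interior $z_m$; then $g'(z_m) = 0$ and $g(z_m) \to R^+$. In the unbounded sub-case, after a further subsequence $C_m \supset (\alpha_m, \infty)$ (the ray $(-\infty, \beta_m)$ is symmetric), giving $\liminf_{x\to\infty} g(x) \ge R^+ - 1/m$ for every $m$ and hence $\lim_{x\to\infty} g(x) = R^+$; the Mean Value Theorem then furnishes $\xi_n \in (n, n+1)$ with $g'(\xi_n) = g(n+1) - g(n) \to 0$ and $g(\xi_n) \to R^+$. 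Relabelling $z_m$ or $\xi_m$ as $w_m$, I have $g(w_m) \to R^+$ and $g'(w_m) \to 0$.

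To enforce $g'(x_m^+) \ne 0$ I invoke the Morse hypothesis: the zero set of $g'$ is discrete, so its complement is open and dense. For each $m$ pick $x_m^+$ within distance $\delta_m$ of $w_m$ with $g'(x_m^+) \ne 0$; by continuity of $g$ and $g'$ at $w_m$, the shift $\delta_m$ can be made so small that $|g(x_m^+) - g(w_m)| + |g'(x_m^+) - g'(w_m)| < 1/m$, preserving the two limits.

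The main obstacle is Case 2, where one must recover a Palais--Smale-type sequence at the supremum even though no point realises it. The component dichotomy converts this into a clean alternative, namely a bounded superlevel component produces an interior maximum, while an unbounded one forces $g$ to have a genuine limit at infinity; in both sub-cases classical one-variable calculus (interior extremum, respectively Mean Value Theorem) supplies the approximate critical sequence. The Morse hypothesis is used only at the very last step to move off true zeros of $g'$.
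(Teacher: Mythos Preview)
Your argument is correct, but it takes a somewhat different route from the paper's. The paper does not aim for $R^+=\sup g$; it is content with \emph{any} limit value $R^+>\mu$. It sets $A^+=\{g>\mu\}$ and splits on whether $A^+$ contains a critical point: if so, the Morse hypothesis gives the sequence immediately (your Case~1 idea), and if not, then $g$ is strictly monotone on each connected component of $A^+$, which forces some component to be a half-line (a bounded component would violate Rolle's theorem at its endpoints), and then boundedness of $g$ on that ray yields the sequence by the same integration/MVT trick you use. Your approach instead targets the supremum and handles the non-attained case by a superlevel-component dichotomy, which is a clean Palais--Smale style argument but costs you an extra layer of case analysis (bounded vs.\ unbounded components, then left vs.\ right ray). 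Both approaches land on the same endgame---a half-line on which $g$ has a limit---but the paper gets there faster by relaxing what $R^+$ has to be.
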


We will  use the lemma above for the function $g = f ' $ and $\mu= \mu_{1,b}$.

\smallskip

%

\begin{proof}  Without loss, $\mu=0$: the general case follows by translation.  We prove the existence of $x^+_m$, the other case being similar. Since $\mu=0$ is an interior point of $g(\mathbb R)$,  $A^+=\{x\in\mathbb R: \ g(x)>0\}$ is nonempty.  If $A^+$ contains a critical point $x^+$, then $g(x^+)>0$,
$g'(x^+)=0$,  and since $g$ is a Morse function, the existence of the requested sequence $x^+_m\to x^+$   is clear.

Suppose that $A^+ \ne \RR$ has no critical points: by the monotonicity of $g$ in its connected components,  $A^+$ necessarily contains a semi-infinite interval, say $(c,+\infty) \ne \RR$ (the case $(-\infty,c)$ being similar), in which $g'$ is strictly positive. Since $g$ is  bounded, as $x\to +\infty$ we have $g \to R^+ > 0$. For the existence of the desired $x^+_m$, we show that
\[ \forall \epsilon >0 \, , \ \forall N \in \RR \, , \ \exists x  \quad x > N \  \hbox{ and } \ g'(x) < \epsilon \, .\]
Arguing by contradiction, there are $\epsilon >0$ and $N \in \RR$ for which $g'(x) \ge \epsilon$ if $x>N$. But then $g$ is unbounded: integrate $g'$ on intervals  $[N, N+p], p >0$.
\qed
\end{proof}

%

\smallskip
\begin{prop}\label{prop:psipos}   Suppose $(1), (2), (3)$. There is $\overline u  \in \overline{{\cal V}}$ with $ \lambda_1(\overline u ) = 0 , \delta_1(\overline u)>0$.
\end{prop}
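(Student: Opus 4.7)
The goal is to build $\overline u = \ell\chi_\theta + r\chi^c_\theta \in \overline{\cal V}$ satisfying both $\lambda_1(\overline u) = 0$ and $\delta_1(\overline u) > 0$. For the first equation, Lemma~\ref{lemma:balance} reduces the problem to choosing $\ell, r$ with $f'(\ell)$ and $f'(r)$ on opposite sides of $\mu_{1,b}$ and then setting $\theta = \Theta(f'(\ell), f'(r))$. For the second, since $\overline u$ takes only the values $\ell$ and $r$, Proposition~\ref{prop:derlambda} gives the explicit formula
\[
\delta_1(\overline u) \;=\; -\,f''(\ell)\!\int_{\Omega_\theta}\!\phi_1(\overline u)^3 \;-\; f''(r)\!\int_{\Omega^c_\theta}\!\phi_1(\overline u)^3,
\]
in which $\phi_1(\overline u) > 0$ makes both integrals strictly positive. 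Hence $\delta_1(\overline u) > 0$ will follow as soon as $f''(\ell), f''(r) \le 0$ with strict inequality in at least one.

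The source of such $\ell, r$ will be Lemma~\ref{lemma:PS}, applied to $g = f'$ with $\mu = \mu_{1,b}$. By hypothesis~(1), $\mu_{1,b}$ lies in the interior of $\overline{f'(\RR)} = [m,M]$; by hypothesis~(2), the Morse regularity that the lemma needs on the relevant critical values of $f'$ is available, because $f''$ and $f'''$ cannot vanish simultaneously at the level $f' = \mu_{1,b}$. The lemma will supply sequences $x_m^\pm$ with $f'(x_m^+) \to R^+ > \mu_{1,b}$, $f'(x_m^-) \to R^- < \mu_{1,b}$ and $f''(x_m^\pm) \to 0$, $f''(x_m^\pm) \ne 0$. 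Putting $\ell_m = x_m^-$, $r_m = x_m^+$, $\theta_m = \Theta(f'(\ell_m), f'(r_m))$ and $\overline u_m = \ell_m\chi_{\theta_m} + r_m\chi^c_{\theta_m}$ then yields $\lambda_1(\overline u_m) = 0$ automatically.

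The hard part, and the main obstacle, will be to arrange the signs $f''(x_m^\pm) < 0$ on both sides of $\mu_{1,b}$. This is where hypothesis~(3) enters: the zero $x_\ast$ of $f''$, together with the nondegeneracy provided by (2), forces $f''$ to take strictly negative values on a nonempty open set. I would then inspect the two cases in the proof of Lemma~\ref{lemma:PS}: when $x_m^\pm$ approaches a Morse critical point of $f'$ at which $f''$ changes sign, I approach from the side where $f'' < 0$; when the set $\{f' > \mu_{1,b}\}$ (resp.\ $\{f' < \mu_{1,b}\}$) is only a semi-infinite monotone ray, the nonconvexity region from (3) meets it, and I pick the sequence inside a component of $\{f'' < 0\}$ lying in that ray. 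A symmetric selection on the opposite side finishes the construction, and $\overline u = \overline u_m$ for $m$ sufficiently large delivers an element of $\overline{\cal V}$ with $\lambda_1(\overline u) = 0$ and $\delta_1(\overline u) > 0$.
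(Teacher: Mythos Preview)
Your setup is right --- balance $\lambda_1$ via Lemma~\ref{lemma:balance}, then read off
\[
\delta_1(\overline u) = -f''(\ell)\int_{\Omega_\theta}\phi_1^3 - f''(r)\int_{\Omega_\theta^c}\phi_1^3
\]
and try to force both coefficients nonpositive. The gap is in the sign control on the \emph{second} side. Your claim that ``the nonconvexity region from (3) meets'' the semi-infinite monotone ray of $\{f'>\mu_{1,b}\}$ is unsupported and in fact false: take $f'$ with a single critical point (a global minimum) at some $x_\ast$ with $f'(x_\ast)<\mu_{1,b}$, so that $f''<0$ on $(-\infty,x_\ast)$ and $f''>0$ on $(x_\ast,+\infty)$. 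Hypotheses $(1),(2),(3)$ all hold, yet $\{f'>\mu_{1,b}\}$ is a half-line on which $f''>0$ strictly. No $r$ with $f'(r)>\mu_{1,b}$ and $f''(r)\le 0$ exists, and your sufficient condition ``$f''(\ell),f''(r)\le 0$'' cannot be arranged.

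The paper does not try to control the sign of $f''$ on both sides. It fixes only \emph{one} side using $(3)$: near the zero $x_\ast$ of $f''$ (say $f'(x_\ast)<\mu_{1,b}$) pick $x_\ast^-$ with $f''(x_\ast^-)<0$ and $f'(x_\ast^-)<\mu_{1,b}$, and keep $\ell=x_\ast^-$ fixed throughout. On the other side it uses the sequence $x_m^+$ from Lemma~\ref{lemma:PS} not to make $f''(x_m^+)$ negative but merely to make it tend to zero. Balancing gives $\overline u_m$ with $\lambda_1(\overline u_m)=0$, and by $ub$-continuity of $\delta_1$ (Proposition~\ref{prop:derlambda}) one passes to the limit:
\[
\lim_{m\to\infty}\delta_1(\overline u_m) \;=\; -f''(x_\ast^-)\int_{\Omega_{\theta_\infty}}(\phi_1^\infty)^3 \;-\; 0 \;>\; 0,
\]
since the second integral is multiplied by $\lim f''(x_m^+)=0$. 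Positivity of $\delta_1(\overline u_m)$ for large $m$ then follows from continuity, regardless of the sign of $f''(x_m^+)$. This limit argument is the missing idea.
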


\begin{proof}  From  $(3)$, take  $x_\ast$ with $f''(x_\ast) = 0$ and thus $f'(x_\ast) \ne \mu_{1,b}, \, f'''(x_\ast) \ne 0$, by $(2)$. Suppose also
$f'(x_\ast) < \mu_{1,b}$  --- the other case is similar.
By $(2)$, there is a point $x_\ast^-$ arbitrarily close to $x_\ast$ for which $f''(x_\ast^-) <0 $, with $f'(x_\ast^-) < \mu_{1,b}$.

For $p \in \Omega$, we consider translated sectors $p + S(\theta)$ defined in Section \ref{four}.
For $x_m^+$ obtained from Lemma \ref{lemma:PS} with $g = f '$, set
\[ \overline u_m^+ = \overline u( x_\ast^-,\,  x_m^+, \, \Theta_m = \Theta(x_\ast^-,x_m^+)) = x_\ast^- \, \chi_{\Theta_m} + x_m^+ \, \chi^c_{\Theta_m} \ , \]
so that $\lambda_1(\overline u_m^+) =0$ by balancing (Lemma \ref{lemma:balance}): notice that $f'(x_m^+) \to R^+ > \mu_{1,b}$.
From the definition of a standard boundary condition, the eigenfunctions $\phi_1(\overline u_m^+)$ of the potentials $\overline q(f'(x_\ast^-), f'(x_m^+), \Theta_m) =  f'(\overline u_m^+)$, are uniformly bounded and converge to the eigenfunction $\phi_1^\infty$ for the potential $\overline q(f'(x_\ast^-), R^+, \Theta_\infty = \Theta(f'(x_\ast^-), R^+))$. By the continuity of $\delta_1$ (Proposition \ref{prop:derlambda}),
\[ \lim_{m \to \infty}  \delta_1(\overline{u}_m^+) =  - \int_{\Omega_{\theta_\infty}} f'' (x_\ast^-) (\phi_1^\infty)^3  - \int_{\Omega^c_{\theta_\infty}}\big(\lim_{m \to \infty}  f'' (x_m^+) \big) (\phi_1^\infty)^3 > 0 \  \]
and thus, for some large $K$, $\delta_1(\overline{u}_{K}^+) > 0$: take $\overline u = \overline{u}_{K}^+$.
\qed
\end{proof}

\medskip
We now mollify $\overline{u}_K$. Recall that $D \subset B^2_b$ is dense in $\Hzb$.

\medskip
\begin{prop} \label{prop: molly} Assume $(1),(2),(3)$. There is  $u\in D$ with $\lambda_1(u) = 0,\ \delta_1(u) > 0$.
\end{prop}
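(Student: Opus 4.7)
The plan is to mollify the two-valued $\overline u$ furnished by Proposition \ref{prop:psipos} into a nearby element of $D$, and then perform a small one-parameter correction to restore the nonlinear constraint $\lambda_1=0$ exactly without destroying the sign of $\delta_1$.

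First I would fix a direction $\psi \in D$ with $D\lambda_1(\overline u)\,\psi > 0$. By Proposition \ref{prop:derlambda}, $\nabla\lambda_1(\overline u) = -f''(\overline u)\phi_1(\overline u)^2$ does not vanish in $\Hzb$ (recall $f''(x_\ast^-) \neq 0$ and $\phi_1(\overline u) > 0$ on $\Omega_\theta$), so $v \mapsto \langle \nabla\lambda_1(\overline u), v\rangle$ is a nontrivial bounded linear functional on $B^2_b$; density of $D$ in $\Hzb$ prevents it from vanishing identically on $D$, and replacing $\psi$ by $-\psi$ if necessary arranges the correct sign.

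Next I would produce $u_n \in D$ with $u_n \ub \overline u$. A standard mollification (smoothing $\chi_\theta$ and $\chi^c_\theta$ across a shrinking transition layer and adjusting inside a thin collar of $\partial\Omega$ so that the standard boundary condition holds) yields $v_n \in B^2_b$ with uniformly bounded $L^\infty$ norm and $v_n \to \overline u$ in $L^2$; density of $D$ in $\Hzb$, together with a truncation to preserve the $L^\infty$ bound, allows me to replace $v_n$ by $u_n \in D$ still satisfying $u_n \ub \overline u$. The $ub$-continuity of $\lambda_1$ and $\nabla\lambda_1$ (Propositions \ref{prop:fantasma} and \ref{prop:derlambda}) then forces $\lambda_1(u_n) \to 0$ and $D\lambda_1(u_n)\,\psi \to D\lambda_1(\overline u)\,\psi > 0$.

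The correction is a one-variable implicit function argument in $B^2_b$. Since $\lambda_1$ is smooth on $B^2_b$, the scalar map $s \mapsto \lambda_1(u_n + s\psi)$ has derivative bounded below by some $c > 0$ on a neighborhood of $s=0$ for $n$ large, while its value at $0$ is arbitrarily small, so there is $s_n$ with $|s_n| = O(|\lambda_1(u_n)|) \to 0$ and $\lambda_1(u_n + s_n\psi) = 0$. Setting $u := u_n + s_n \psi$, this lies in the subspace $D$ and satisfies $u \ub \overline u$, so the $ub$-continuity of $\delta_1$ yields $\delta_1(u) \to \delta_1(\overline u) > 0$, giving $\delta_1(u) > 0$ for $n$ large, as required. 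The main obstacle is the mollification step: producing a $ub$-approximation of the sharply discontinuous $\overline u$ by elements of $D$ compatible with an arbitrary standard boundary condition; once this is in hand, the perturbation and the transfer of the sign of $\delta_1$ are routine consequences of the smoothness and $ub$-continuity developed in Section \ref{section:spectrum}.
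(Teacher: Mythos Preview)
Your proof is correct and shares the paper's strategy: exploit $\nabla\lambda_1(\overline u)\ne 0$ to adjust within $D$ so that $\lambda_1=0$ exactly, then read off $\delta_1>0$ by $ub$-continuity. The paper arranges the steps a little more economically. Instead of mollifying $\overline u$ by a sequence and then correcting along an abstract direction $\psi\in D$, it first perturbs \emph{inside the two-valued family}, sending $x_\ast^-\mapsto x_\ast^-+t$ with $\Theta_K$ fixed: since $f''(x_\ast^-)<0$, the map $t\mapsto\lambda_1(\overline u(t))$ is strictly monotone at $t=0$, yielding nearby two-valued $\overline u^\pm$ with $\lambda_1(\overline u^+)>0>\lambda_1(\overline u^-)$ and $\delta_1(\overline u^\pm)>0$. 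One then approximates each of $\overline u^\pm$ by a single $u^\pm\in D$ preserving these strict inequalities, and the intermediate value theorem on the segment $[u^-,u^+]\subset D$ produces the desired $u$. This trades your uniform lower bound on the derivative and the sequence argument for a bare IVT on one segment; the point you correctly flag as the main obstacle---producing $ub$-close elements of $D$---is present in both versions, but in the paper's arrangement it appears only as two one-shot approximations of open conditions rather than as control of an entire mollifying sequence.
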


\begin{proof} For $\overline u_K= \overline u(x_\ast^-,x_K^+,\Theta_K) \in \Hzb$ as above, $\lambda_1(\overline u_K)=0, \delta_1(\overline u_K) >0$. Let $U \subset \Hzb$ be an open ball centered in $\overline u_K$ in which $\delta_1$ is positive. Consider the segment $\overline u(t)= \overline u(x_\ast^- + t,x_K^+,\Theta_K) \in H^0$ for $t$ near zero (notice that $\Theta_K$ stays fixed). Since $f''(x_\ast^-) <0$, from Proposition \ref{prop:derlambda}, $\lambda_1(\overline u(t))$ at $t=0$ is strictly monotonic. Thus, there are  $\overline u^+, \overline u^- \in U$ with $\lambda_1(\overline u^+) >0$ and $\lambda_1(\overline u^-)< 0$. Now take  $u^+,  u^- \in U \cap D$ for which $\lambda_1(u^+) >0$ and $\lambda_1( u^-)< 0$. By continuity, there is $u \in D$ in the segment joining $u^+$ and $ u^-$ for which $\lambda_1(u) = 0$ and  $\delta_1(u) > 0$.
\qed
\end{proof}
\begin{figure}[ht]
\begin{center}
\epsfig{height=45mm,file=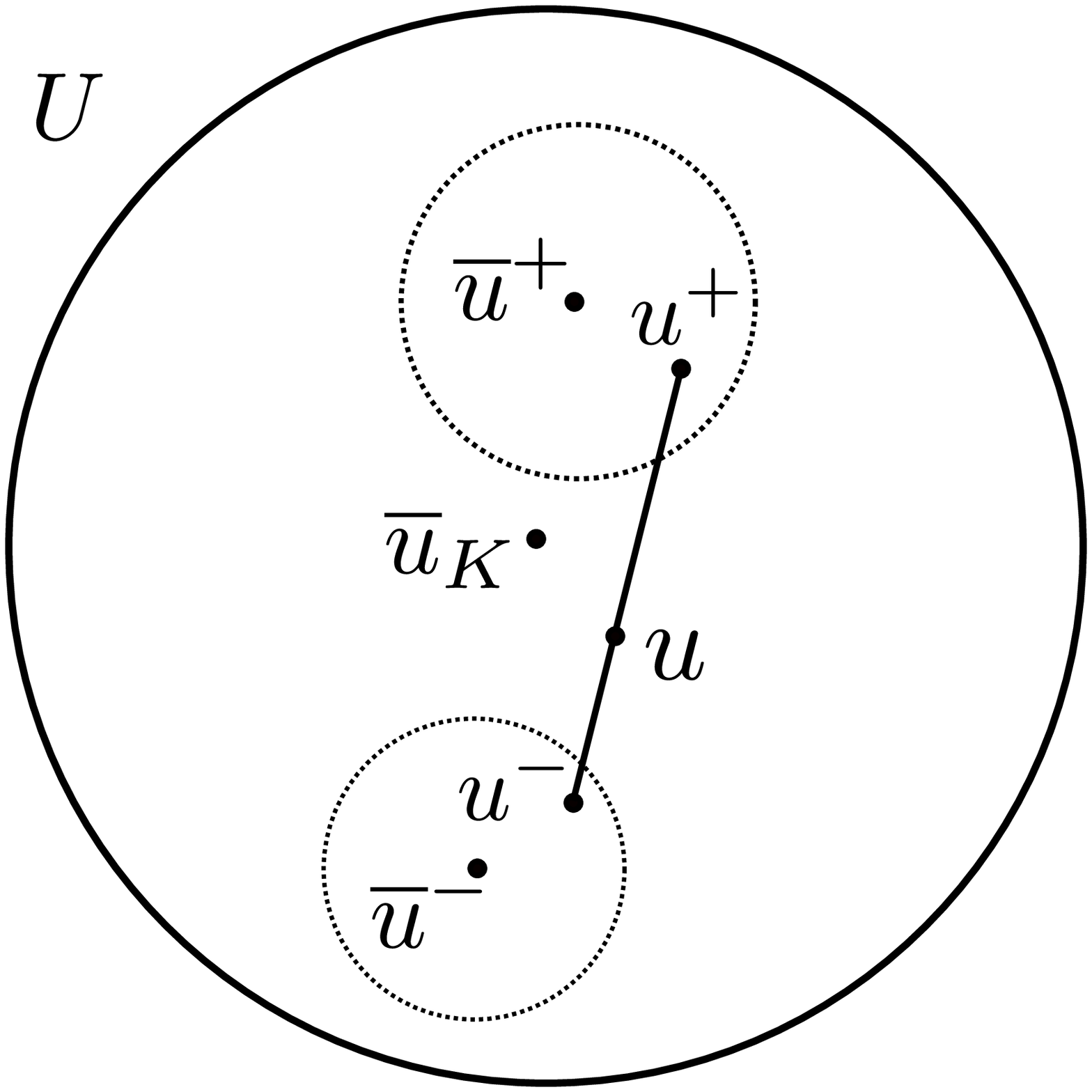}
\end{center}
\end{figure}

\bigskip
This completes the proof of Theorem  \ref{Rec}: for the function $u$ above, $F(u)$ has (at least) four preimages. Notice that a weaker version of hypothesis $(2)$ would suffice: the fact that $f' - \mu_{1,b}$ and $f'''$ have no common roots is easily removed.

\section{Cusps and Theorem \ref{theo:c}} \label{section:cusp1}

The proof of Theorem \ref{theo:c}  requires a few steps. In
Section \ref{section:defcusp} we validate the characterization of a cusp  as a zero $u_{nf}$ of the function $\Lambda$, together with the
transversality condition $\tau_1(u_{nf}) \ne 0$. The next step is to obtain a zero $\overline u_{nf}
\in \overline{{\cal V}}$ (Section \ref{section:nonfold1}) and then by mollification a
smooth zero $u_{nf}$ (Section \ref{section:mollifiers}) for which $\nabla \lambda_1(u_{nf})$ and $\nabla \delta_1(u_{nf})$ are independent. Finally we show that some $u_c$ nearby does everything we want: it is either a cusp (i.e., it satisfies $\tau_1(u_{nf}) \ne 0$) or a point for which $F(u_c)$ has a full interval of preimages (Section \ref{section:transversality}).

\subsection{Folds, nonfolds and cusps} \label{section:defcusp}

From the arguments of Section \ref{section:theo3}, there are
fibers on which the height $h$ has  local maxima and minima. A
coalescence of both extrema would yield  a critical point of $h$
for which the {\it second} derivative along a fiber is zero
--- a point like this is a {\it nonfold}, which we proceed to
describe in detail. Nonfolds which satisfy additional generic
properties are {\it cusps}.

Nonfolds and cusps are identified with local checks: they do not require asymptotic hypotheses like $(4)$. For maps between spaces of finite dimensions, folds and cusps are special cases of {\it Morin singularities} (\cite{Mo}), which generically correspond to critical points $u$ at which the differential $DF(u)$ has kernel of dimension equal to one. Their characterization is amenable to extensions: for example, cusps of functions between Banach spaces are well known
(\cite{CDT},\cite{ChT2},\cite{LM},\cite{R1},\cite{R2},\cite{MST}). We are especially interested in the geometric characterization in p. 183 of \cite{ChT2}, which we transcribe.

For $X$ and $Y$ real Banach spaces, let $U \subset X$ be an open set, $G: U \to Y$ a smooth function. Recall that $\CCC \subset U$, the critical set of $G$, is the set of points of $U$ on which $DG$ is not invertible. We require $\CCC$ to be a manifold. Also, let ${\cal N} \subset {\cal C}$ consists of critical points $u$ for which $ \Ker DG(u) \subset T_{u} \, {\cal C}$.
Then $u_c \in U$ is a {\it cusp} of $G$ if and only if
\begin{enumerate}[(a)]
\item $DG(u_c)$ is a Fredholm map of index $0$ and $\dim \Ker DG(u_c) = 1$,
\item $\CCC \subset U$ is a manifold of codimension 1  and $ u_c \in {\cal N}$, i.e., $ \Ker DG(u_c) \subset T_{u_c} \, {\cal C}$,
\item ${\cal N} \subset \CCC$ is a manifold of codimension 1  of $\CCC$
and $ \Ker DG(u_c) \not \subset T_{u_c} \, N$.
\end{enumerate}

First, we settle the smoothness of the functions in Theorem \ref{theo:c}. Some of the required computations are known (\cite{R1} is a good example). From hypothesis $(1)$, for  $u \in \Hzb$, the eigenpair $\lambda_1(u), \phi_1(u)$ is well defined. Let $V = \langle \phi_1(u) \rangle$, and  consider the orthogonal split $\Hzb \e W \oplus V$, $H^2_b = W^2 \oplus V$ with associated projection $\Pi_W: \Hzb \to W$. Denote the restriction of an operator $T$ to $W$ by $T_W$.

\begin{prop} \label{prop:Lambda} Suppose hypothesis $(1)$.
Let $f:\RR \to \RR$ be smooth.  The function
\[ \Lambda: B^2_b \to \RR^2 \, , \quad \Lambda(u) \, = \,  (\, \lambda_1(u) \, , \ \delta_1(u) \,  )  \]
is smooth. The gradient of $\delta_1: B^2_b \to \RR$ is \[ \nabla \delta_1(u) = - f'''(u) \phi_1^3(u) - 3 w(u) f''(u) \phi_1(u) \in \Hzb \, , \]
where $w(u) = D\phi_1(u) \, \phi_1(u) = \big( DF(u) - \lambda_1(u) I \big)_W^{-1} \Pi_W(f''(u) \phi_1^2(u)) \in W^2 \subset H^2$.
The  extension $\Lambda: \Hzb \to \RR^2$, the gradient  $\nabla \delta_1 : \Hzb \to \Hzb$ and the functional
\[ \tau_1: \Hzb \to \RR \, , \quad \tau_1(u) \e D\delta_1(u) \,  \phi_1(u) \e
\langle \, \nabla \delta_1 (u) \, , \, \phi_1(u) \, \rangle \]
are $ub$-continuous.
For any  $2$-dimensional affine subspace $V_\ast \subset \Hzb$, the restriction $\Lambda|_{V_\ast}:V_\ast \to \RR^2$ is a $\CCC^1$ map.
\end{prop}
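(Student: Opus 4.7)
The plan is to treat the four assertions in turn. For smoothness of $\Lambda$ on $B^2_b$, I would invoke Proposition \ref{prop:fantasma}: $\lambda_1$ and $\phi_1$ are smooth there, and elliptic regularity keeps $\phi_1(u) \in B^2_b$ when $u$ is, so $\delta_1(u) = -\int_\Omega f''(u)\,\phi_1(u)^3$ is a smooth composition of smooth ingredients. Hence $\Lambda$ is smooth.

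The formula for $\nabla \delta_1$ is the main computational step. Differentiating $\delta_1$ along $v$ yields $-\langle f'''(u)\phi_1^3, v\rangle$ plus a contribution involving $D\phi_1(u)\, v$. To make $D\phi_1(u)\, v$ explicit I would differentiate the eigenvalue identity $DF(u)\phi_1(u) = \lambda_1(u)\phi_1(u)$, obtaining
\[ (DF(u) - \lambda_1(u) I)\, D\phi_1(u)\, v \e (D\lambda_1(u)\, v)\,\phi_1(u) + f''(u)\, v \,\phi_1(u). \]
The normalization $\|\phi_1(u)\|=1$ forces $D\phi_1(u)\, v \in W$, so projecting the identity onto $W$ eliminates the $\phi_1(u)$-term and inverts to
\[ D\phi_1(u)\, v \e (DF(u)-\lambda_1(u) I)_W^{-1}\,\Pi_W(f''(u) \, v \,\phi_1(u)); \]
specializing $v=\phi_1(u)$ produces the stated expression for $w(u)$. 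Substituting back into $D\delta_1(u)\, v$ and using self-adjointness of the resolvent on $W$ to move it across the $L^2$ pairing then gives exactly $\nabla \delta_1(u) = -f'''(u)\phi_1^3(u) - 3\, w(u)\, f''(u)\, \phi_1(u)$.

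For the extension to $\Hzb$: if $u\in\Hzb$ then $f'(u)\in L^\infty$, so by Proposition \ref{prop:lambdacont} and the standard-boundary-condition hypothesis, $DF(u)$ is self-adjoint with a simple isolated $\lambda_1(u)$ and a spectral gap uniformly controlled on $ub$-bounded sets. Thus $(DF(u)-\lambda_1(u) I)_W^{-1}$ is well-defined and $ub$-continuous with image in $\hX$; elliptic regularity with $L^\infty$ right-hand side keeps its image in $L^\infty$, so $w: \Hzb \to \Hzb$ is $ub$-continuous. Combining with Propositions \ref{prop:fantasma} and \ref{prop:derlambda} gives $ub$-continuity of $\nabla\delta_1:\Hzb\to\Hzb$, and consequently of $\delta_1(u)=\langle\nabla\lambda_1(u),\phi_1(u)\rangle$ and $\tau_1(u)=\langle\nabla\delta_1(u),\phi_1(u)\rangle$.

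Finally, on a $2$-dimensional affine subspace $V_\ast \subset \Hzb$ all norms are equivalent, so every $L^2$-convergent sequence in $V_\ast$ is $L^\infty$-bounded, hence $ub$-convergent; this promotes $ub$-continuity to ordinary continuity on $V_\ast$. The partial derivatives of $\Lambda|_{V_\ast}$ along two fixed spanning directions $v_1, v_2$ are $\langle\nabla\lambda_1(u),v_i\rangle$ and $\langle\nabla\delta_1(u),v_i\rangle$, each continuous in $u\in V_\ast$, so $\Lambda|_{V_\ast}$ is $\CCC^1$. The main obstacle in the whole argument is the control of $(DF(u)-\lambda_1(u) I)_W^{-1}$ under $ub$-convergence, which requires both a uniform spectral gap and an $L^\infty$-estimate for its image; these are the same ingredients behind Proposition \ref{prop:fantasma}, which I would invoke rather than redo.
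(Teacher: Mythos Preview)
Your argument is correct and follows essentially the same route as the paper: smoothness from Proposition~\ref{prop:fantasma}, differentiation of the eigenvalue equation to obtain $D\phi_1(u)\,v$ and hence $\nabla\delta_1$, $ub$-continuity by combining Propositions~\ref{prop:fantasma} and~\ref{prop:derlambda} with continuity of the restricted resolvent, and the norm-equivalence argument on $V_\ast$ for the $\CCC^1$ claim. The one place where the paper is more explicit than your sketch is the ``main obstacle'' you flag: since $W=\phi_1(u)^\perp$ moves with $u$, the paper replaces $(DF(u)-\lambda_1(u)I)_W^{-1}$ by the inverse of the globally defined rank-one perturbation $\tilde T(u)=DF(u)-\lambda_1(u)I+\phi_1(u)\otimes\phi_1(u)$, so that $D\phi_1(u)\,v=\Pi_W\,\tilde T(u)^{-1}\,\Pi_W(f''(u)\phi_1(u)v)$ and $ub$-continuity reduces to that of $\tilde T$ and $\Pi_W$ separately.
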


The proof is given in Section \ref{section: Lambda}.
We now show that the requirements in Theorem \ref{theo:c} indeed yield a cusp in the sense above.

\bigskip
\begin{prop} \label{prop:taubasta} Assume standard boundary conditions and hypotheses $(1)$ and $(2)$. A zero $u_c$ of $\Lambda: B^2_b \to \RR^2$ for which $\tau_1(u_c) \ne 0$ is a bona fide cusp of $F$.
\end{prop}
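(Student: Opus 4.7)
The plan is to verify, for $u = u_c$, the three conditions $(a)$--$(c)$ of the Banach-space cusp criterion stated just before Proposition~\ref{prop:Lambda}, applied to $G = F : B^2_b \to \bY$. Condition $(a)$ is immediate from Proposition~\ref{prop:smooth}: $\lambda_1(u_c)=0$ forces $DF(u_c)$ to be Fredholm of index zero with one-dimensional kernel $\langle\phi_1(u_c)\rangle$.

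For $(b)$, I would show $\nabla\lambda_1(u_c)\neq 0$ in $\Hzb$, so that the implicit function theorem for the smooth map $\lambda_1:B^2_b\to\RR$ makes $\CCC=\lambda_1^{-1}(0)$ a codimension-one submanifold near $u_c$ with $T_{u_c}\CCC=\Ker D\lambda_1(u_c)$; the identity $D\lambda_1(u_c)\phi_1(u_c)=\delta_1(u_c)=0$ from Proposition~\ref{prop:Lambda} then places $\Ker DF(u_c)$ inside $T_{u_c}\CCC$. Showing the non-vanishing of $\nabla\lambda_1(u_c)=-f''(u_c)\phi_1^2(u_c)$ is where hypothesis~$(2)$ enters. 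Since $\phi_1(u_c)>0$, vanishing would force $f''(u_c)\equiv 0$ on $\Omega$; continuity of $u_c$ on a connected domain makes its image an interval $I$ on which $f''$, and hence $f'''=(f'')'$ on the interior of $I$, both vanish, while the eigenequation $-\Delta_b\phi_1(u_c)=f'(u_c)\phi_1(u_c)$ with $\phi_1(u_c)>0$ together with the Perron characterization of the positive ground state forces $f'(u_c)\equiv\mu_{1,b}$ on $I$. This contradicts the assumption that $f'-\mu_{1,b}$, $f''$ and $f'''$ have no common zero.

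For $(c)$ I would first rewrite the membership condition for $\mathcal N$: given $u\in\CCC$, the inclusion $\Ker DF(u)=\langle\phi_1(u)\rangle\subset T_u\CCC=\Ker D\lambda_1(u)$ is equivalent to $\delta_1(u)=0$, so $\mathcal N=\Lambda^{-1}(0)$. Applying the implicit function theorem to the smooth map $\Lambda:B^2_b\to\RR^2$, the codimension-two claim for $\mathcal N$ (equivalently, codimension one inside $\CCC$) reduces to surjectivity of $D\Lambda(u_c)$, that is, to linear independence of $\nabla\lambda_1(u_c)$ and $\nabla\delta_1(u_c)$ in $\Hzb$. If a dependence $\nabla\delta_1(u_c)=c\,\nabla\lambda_1(u_c)$ held, pairing with $\phi_1(u_c)$ would give $\tau_1(u_c)=c\,\delta_1(u_c)=0$, contradicting the hypothesis. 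The remaining transversality $\phi_1(u_c)\notin T_{u_c}\mathcal N=\Ker D\Lambda(u_c)$ rewrites as $D\delta_1(u_c)\phi_1(u_c)=\tau_1(u_c)\neq 0$, which is precisely the hypothesis.

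The only delicate point is the non-vanishing of $\nabla\lambda_1(u_c)$ in step $(b)$: it is the unique use of hypothesis~$(2)$, and the Perron rigidity of a positive ground state is essential. Everything else is a mechanical translation, via the gradient formulas of Proposition~\ref{prop:Lambda}, of the pointwise cusp criterion into surjectivity of $D\Lambda(u_c)$ and a single pairing of $\nabla\delta_1(u_c)$ with $\phi_1(u_c)$.
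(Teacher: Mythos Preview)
Your argument is correct. For (a) and (b) you follow essentially the paper's route; your treatment of $\nabla\lambda_1(u_c)\neq 0$ is a mild rephrasing (the paper notes directly that hypothesis~(2) makes the zeros of $f''$ isolated, whence $f''(u_c)\equiv 0$ forces $u_c$ constant, and then reaches the same contradiction via the pair $(f'-\mu_{1,b},f'')$; your ``Perron'' step is simply that a constant potential $c_0$ gives $\lambda_1=\mu_{1,b}-c_0$).

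The genuine difference is in (c). Your pairing trick---if $\nabla\delta_1(u_c)=c\,\nabla\lambda_1(u_c)$ then $\tau_1(u_c)=c\,\delta_1(u_c)=0$---is shorter than the paper's and uses only the standing hypothesis $\tau_1(u_c)\neq 0$, not hypothesis~(2). The paper instead expands $\nabla\delta_1$ via the formula in Proposition~\ref{prop:Lambda} to obtain $(c\phi_1+3w)\,f''(u_c)=-f'''(u_c)\,\phi_1^2$, observes that $\delta_1(u_c)=0$ forces $f''(u_c)$ to change sign, and concludes that some zero of $f''(u_c)$ is also a zero of $f'''(u_c)$, contradicting~(2). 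This is more work but it buys more: it shows that $\nabla\lambda_1$ and $\nabla\delta_1$ are independent at \emph{every} point of ${\cal N}$, not just at those with $\tau_1\neq 0$. The paper relies on this stronger statement in Proposition~\ref{Nfibers}, whose standing assumption is precisely $\tau_1\equiv 0$ on ${\cal N}$ and where your shortcut would therefore be unavailable. For Proposition~\ref{prop:taubasta} in isolation, however, your route is the more economical one, and your remark that (2) is used only once in your proof is accurate.
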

\begin{proof}
For standard boundary conditions and hypothesis $(1)$, $\lambda_1$ and  $\phi_1$ are globally defined, even for $u \in H^0$. In particular, $(a)$ is satisfied. Denote  the levels $\lambda_1^{-1}(0)$ and $\delta_1^{-1}(0)$ by $\CCC$ and $\DDD$, so that $u_c \in {\cal N} = \CCC \cap  \DDD \subset B^2_b$. Most of the result follows from the linear independence of the gradients of $\lambda_1$ and $\delta_1$
at points $u \in {\cal N}$: then near ${\cal N}$ the sets $\CCC$ and $\DDD$ are hypersurfaces and ${\cal N}$ is a manifold, obtained by their transversal intersection.

First,  $\nabla \lambda_1(u_c) = - f''(u_c) (\phi_1(u_c))^2 \not \equiv 0$. Indeed, since the zeros of $f''$ are isolated and $u_c \in B^2_b$ is continuous, if $f''(u_c) \equiv 0$ then  $u_c$ must be constant, implying $\lambda_1(u_c) = 0 $ and $f ' (u_c) = \mu_{1,b}$, which is excluded by  $(2)$.

Thus $\nabla \lambda_1$ and $\nabla \delta_1$ are dependent only if $c  \nabla \lambda_1(  u_c) = \nabla \delta_1 (  u_c)$, for some $c \in \RR$. From the expression for $\nabla \delta_1$ in Proposition \ref{prop:Lambda},
\[  c f'' (  u_c) \phi_1^2(  u_c) = - f'''(  u_c) \phi^3_1(  u_c) - 3 w f''(  u_c) \phi_1(  u_c)  , \]
so that $(c \phi_1 + 3 w)f'' = - f'''\phi_1^2$. In  $\Omega$, $\phi_1 >0$, and $u_c \in {\cal N}$ implies \[ \delta_1(u_c) = \int_\Omega f''(u_c) \phi_1^3(u_c) = 0 \, , \]
so that $f''(u_c)$ changes sign. Thus the zeros of $f''(u_c)$ must be zeros of $f'''(u_c)$, again contradicting hypothesis $(2)$: the gradients are independent.  Also, since $\ker DF(u_0)$ is spanned by $\phi_1(u_0)$, (b) is satisfied, together with  the first part of $(c)$. The second part of $(c)$ is exactly the requirement $\tau_1(u_c) \ne 0$. \qed
\end{proof}

%

\bigskip
We  make some distinctions among critical points.
A {\it fold} $u_f$ is a regular critical point of $F$ for which $\phi_1(u_f) \notin T_{u_f}{\cal C}$ (i.e. $\delta_1(u_f) \ne 0$),
 the generic situation.
A {\it nonfold} $u_{nf} \in \Hzb$ is a zero of $\Lambda: \Hzb \to \RR^2$ and it is {\it regular} if $D\Lambda(u_{nf})$ is surjective. Thus, a cusp $u_c \in B^2_b$ is a regular nonfold for which $\tau_1(u_c) \ne 0 $.

\subsection{A regular nonfold $\overline u_{nf}$} \label{section:nonfold1}

The next result is the key ingredient in the proof of Theorem \ref{theo:c}. In a sense, it plays the role of balancing for the functional $\delta_1$.

\begin{prop} \label{prop:goodnonfold1} Suppose $(1),(2)$ and $(3)$ and standard boundary conditions.  Then there is a regular nonfold $\overline u_{nf} \in \overline{{\cal V}}$.
\end{prop}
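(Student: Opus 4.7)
The plan is to enlarge the argument of Proposition~\ref{prop:psipos} by a continuous parameter, run an intermediate value argument to force $\delta_1 = 0$ along with $\lambda_1 = 0$, and then settle regularity by showing $\nabla\lambda_1(\overline u_{nf})$ and $\nabla\delta_1(\overline u_{nf})$ are linearly independent in $\Hzb$.

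By~(3), fix a zero $x_\ast$ of $f''$; by~(2), $f'''(x_\ast)\neq 0$, so $f''$ strictly changes sign at $x_\ast$, and $f'(x_\ast)\neq\mu_{1,b}$. WLOG assume $f'(x_\ast)<\mu_{1,b}$, and choose $x_\ast^-<x_\ast<x_\ast^+$ close enough to $x_\ast$ that $f''(x_\ast^-)<0<f''(x_\ast^+)$ and $f'(x_\ast^\pm)<\mu_{1,b}$. Apply Lemma~\ref{lemma:PS} with $g=f'$, $\mu=\mu_{1,b}$ to produce $x_m^+$ satisfying $f'(x_m^+)\to R^+>\mu_{1,b}$, $f''(x_m^+)\to 0$, $f''(x_m^+)\neq 0$. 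For $t\in[0,1]$ put $\ell_t=(1-t)x_\ast^-+tx_\ast^+$, $\theta_t^m=\Theta(f'(\ell_t),f'(x_m^+))$ (by Lemma~\ref{lemma:balance}), and
\[ \overline u_t^m \,=\, \ell_t\,\chi_{\theta_t^m} + x_m^+\,\chi^c_{\theta_t^m}, \]
so $\lambda_1(\overline u_t^m)\equiv 0$; since $\Theta$ is continuous, $t\mapsto\overline u_t^m$ is $ub$-continuous, hence $t\mapsto\delta_1(\overline u_t^m)$ is continuous by Proposition~\ref{prop:derlambda}. The formula
\[ \delta_1(\overline u_t^m) \,=\, -f''(\ell_t)\!\int_{\Omega_{\theta_t^m}}\!\phi_{1,t,m}^3 \,-\, f''(x_m^+)\!\int_{\Omega^c_{\theta_t^m}}\!\phi_{1,t,m}^3, \]
together with the uniform sup-bound on $\phi_{1,t,m}$ (standardness) and $f''(x_m^+)\to 0$, yields $\delta_1(\overline u_0^m)\to -f''(x_\ast^-)\!\int_{\Omega_{\theta_0^\infty}}(\phi_1^\infty)^3>0$ and $\delta_1(\overline u_1^m)\to -f''(x_\ast^+)\!\int_{\Omega_{\theta_1^\infty}}(\phi_1^\infty)^3<0$. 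For $m$ large, the intermediate value theorem supplies $t_\ast=t_\ast^m\in(0,1)$ with $\delta_1(\overline u_{t_\ast}^m)=0$; put $\overline u_{nf}=\overline u_{t_\ast}^m$, so $\Lambda(\overline u_{nf})=0$.

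For regularity, Propositions~\ref{prop:derlambda} and~\ref{prop:Lambda} give
\[ \nabla\lambda_1 = -f''(\overline u_{nf})\phi_1^2, \qquad \nabla\delta_1 = -f'''(\overline u_{nf})\phi_1^3 - 3f''(\overline u_{nf})w\phi_1, \]
with $w=D\phi_1(\overline u_{nf})\phi_1(\overline u_{nf})$. The constraint $\delta_1(\overline u_{nf})=0$ together with $f''(x_m^+)\neq 0$ forces $f''(\ell_{t_\ast^m})\neq 0$. A putative dependence $c_1\nabla\lambda_1+c_2\nabla\delta_1\equiv 0$ reduces, on $\Omega_{\theta_{t_\ast^m}}$ (where $\overline u_{nf}\equiv\ell_{t_\ast^m}$), to the pointwise identity
\[ c_1 f''(\ell_{t_\ast^m})\phi_1 + c_2 f'''(\ell_{t_\ast^m})\phi_1^2 + 3 c_2 f''(\ell_{t_\ast^m})\,w = 0, \]
and analogously on $\Omega^c_{\theta_{t_\ast^m}}$. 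Linear independence of $\phi_1,\phi_1^2,w$ on $\Omega_{\theta_{t_\ast^m}}$ (the eigenfunction is non-constant whenever $f'(\ell_{t_\ast^m})\ne 0$, and $w$ solves a distinct Poisson-type equation) combined with $f''(\ell_{t_\ast^m})\neq 0$ then forces $c_1=c_2=0$; any residual degenerate alignment is removed by a small perturbation of $x_\ast^\pm$ or $x_m^+$, which preserves the endpoint signs of $\delta_1$ and hence the IVT producing $t_\ast^m$. The main obstacle is precisely this regularity step: since $f''(\ell_{t_\ast^m})$ and $f''(x_m^+)$ both tend to zero with $m$, the natural $2\times 2$ Jacobian obtained by testing $D\Lambda$ against $(\chi_{\theta_{t_\ast^m}},\chi^c_{\theta_{t_\ast^m}})$ has both first-row entries degenerating simultaneously, so one must exploit $f'''(x_\ast)\neq 0$ from~(2) to recover transversality at subleading order.
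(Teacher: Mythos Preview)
Your existence argument --- balancing $\lambda_1$ via Lemma~\ref{lemma:balance} and then killing $\delta_1$ by an intermediate value argument --- is sound and close in spirit to the paper's, though the paper varies the right value $r$ along a monotonicity interval of $f'$ while you interpolate the left value $\ell$ across $x_\ast$. Either produces a nonfold $\overline u_{nf}\in\overline{{\cal V}}$.

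The genuine gap is the regularity step. You reduce a putative dependence $c_1\nabla\lambda_1+c_2\nabla\delta_1=0$ to the pointwise relation
\[
c_1 f''(\ell_{t_\ast^m})\,\phi_1 + c_2 f'''(\ell_{t_\ast^m})\,\phi_1^2 + 3c_2 f''(\ell_{t_\ast^m})\,w \;=\; 0 \quad\text{on }\Omega_{\theta_{t_\ast^m}},
\]
and then assert that $\phi_1,\phi_1^2,w$ are linearly independent there. The parenthetical justification (``$w$ solves a distinct Poisson-type equation'') is not a proof. Applying $(-\Delta-f'(\ell))$ to your relation yields, after simplification, $|\nabla\phi_1|^2=C\,\phi_1^2$ on $\Omega_{\theta_{t_\ast^m}}$ for an explicit constant $C$ depending on $f'(\ell),f''(\ell),f'''(\ell)$; ruling this out is not automatic for general boundary conditions and domains, and you do not do it. The fallback (``any residual degenerate alignment is removed by a small perturbation of $x_\ast^\pm$ or $x_m^+$'') does not work either: perturbing those parameters moves $t_\ast^m$ and hence $\ell_{t_\ast^m}$ in an uncontrolled way, while the object you want to make nonzero involves $w$, which depends nonlocally on the whole potential. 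Your closing paragraph in fact concedes that this is ``the main obstacle'' and leaves it unresolved.

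The paper avoids the difficulty by \emph{choosing} $\ell$ rather than letting an IVT determine it. It fixes $\ell$ at a point with $f'''(\ell)=0$ inside a maximal monotonicity interval of $f'$ adjacent to $x_\ast$ (such a point exists because $f''$ vanishes at both ends of the interval, or via Lemma~\ref{lemma:PS} when the interval is unbounded), so that by~(2) one has $f''(\ell)\neq 0$. The IVT is then run in $r$ to obtain $\delta_1=0$. With $f'''(\ell)=0$, the dependence relation on $\Omega_{\Theta(\ell,r)}$ collapses to $w=c'\phi_1$; since $DF(\overline u_{nf})$ is a local operator, applying it on $\Omega_{\Theta(\ell,r)}$ gives $f''(\ell)\phi_1^2=0$, a clean contradiction. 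The point is that the extra constraint $f'''(\ell)=0$ eliminates the $\phi_1^2$ term and reduces the three-function independence question to the trivial one $w\not\equiv c'\phi_1$, which follows from a single application of the differential operator.
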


\begin{proof} We say that two points $x, y \in \RR$ are {\it opposite} with respect to a continuous function $h: \RR \to \RR$ if $h(x) h(y) <0$. In order to $\overline u_{nf} = \ell \, \chi_\theta + r \, \chi^c_\theta$ satisfy $ \lambda_1(\overline u_{nf}) = 0$, either $f'(\ell) = f'(r) = \mu_{1,b}$ or
$\ell$ and $r$ are opposite with respect to $f' - \mu_{1,b}$. Conversely, $f'(\ell) = f'(r) = \mu_{1,b}$ implies $\lambda_1(\overline u_{nf})=0$ for any $\theta \in [0,2\pi]$, and otherwise a unique balancing $\Theta(\ell,r)$ provided by Lemma \ref{lemma:balance} obtains $\lambda_1(\overline u_{nf})=0$.

Similarly,  $\delta_1(\overline u_{nf})=0$ requires that either $f''(\ell)=f''(r)=0$ or $\ell$ and $r$ are opposite with respect to $f''$. In the construction that follows, we also take $\ell$ so that $f'''(\ell) = 0$, to be used in the proof that $\overline u_{nf}$ is a regular nonfold.

We must consider a few alternatives. From hypothesis $(3)$, there is $x_\ast$ with $f''(x_\ast)=0$.  By hypothesis $(2)$, $f'(x_\ast) \neq \mu_{1,b}$ and $f'''(x_\ast) \ne 0$.
Suppose without loss that $f'(x_\ast) < \mu_{1,b}$. Moreover we can suppose $f'''(x_\ast)>0$. Indeed, if
$$
f'(x_\ast)<\mu_1, \ f''(x_\ast)=0, \ f'''(x_\ast)<0
$$
there will be another stationary point $x_{\ast\ast}$ for $f'$ such that

$$
f'(x_{\ast\ast})<\mu_1, \ f''(x_{\ast\ast})=0, \ f'''(x_{\ast\ast})>0
$$
in virtue of hypothesis (1), and we take $x_\ast = x_{\ast\ast}$. Let $I_{-}=(m_-,x_\ast)$ and $I_+=(x_\ast,m_+)$ be the maximal monotonic intervals of $f'$ (i.e., the largest intervals on which $f'' \ne 0$) having $x_\ast$ as an endpoint. By hypotheses $(1)$ and $(2)$, we may suppose that one such interval crosses $\mu_{1,b}$.

Suppose  $f'$  stays below $\mu_{1,b}$ on $I_-$ but crosses $\mu_{1,b}$ for $y_\mu \in I_+$: $f'(y_\mu) = \mu_{1,b}$. Take $\ell$ to be the point with $f'''$ equal to zero in $I_-$: the interval of opposite points in which to find the appropriate $r$ is $I_\mu = (y_\mu, z_\mu)$, the  part of $I_+$ above $\mu_{1,b}$. The argument is the same if the roles of the intervals are interchanged.

Suppose first that $z_\mu < \infty$, and thus $f''(z_{\mu}) =0$, so that $\ell$ is opposite to all points in $I_\mu$ with respect both to $f' - \mu_{1,b}$ and $f''$.

We compute $\delta_1$ at extreme points of $I_\mu$. For $\overline u = \overline u(\ell, y_\mu, \Theta(\ell,y_\mu))$, we have \[ \Theta(\ell,y_{\mu})= 0 \, , \quad \mu(\Omega_{0}) =0 \, , \quad f''(y_\mu) > 0 \,  ,\]
\[ \delta_1(\overline u(\ell, y_\mu, 0))
 =  - \int_{\Omega_{0}} f'' (\ell) (\phi_1(\overline u))^3  - \int_{\Omega^c_{0}} f'' (y_\mu)  (\phi_1(\overline u))^3 < 0  \, .  \]
For $\overline u = \overline u(\ell, z_\mu, \Theta(x_{\mu}, z_\mu))$, since $f''(\ell) <0$ and  $f''(z_{\mu}) = 0$,
 \[ \delta_1(\overline u(\ell, z_\mu, \Theta(x_{\mu}, z_\mu)))
 =  - \int_{\Omega_{\Theta(\ell, z_{\mu})}} f'' (\ell) (\phi_1(\overline u))^3  - \int_{\Omega^c_{\Theta(\ell, z_{\mu})}} f'' (z_{\mu})  (\phi_1(\overline u))^3 > 0  \, .  \]

By the continuity of $\delta_1$ (Proposition \ref{prop:derlambda}), there is $r \in I_\mu$ for which $\overline u_{nf} = \overline u(\ell, r, \Theta(\ell,r))$ is a common zero of $\lambda_1$ and $\delta_1$.

Suppose now $z_\mu = \infty$. Take $z_m^+ \to \infty$ as in Lemma \ref{lemma:PS} for $g = f'$, so that $f ' (z_m^+) > \mu_{1,b} $ and $f '' (z_m^+) \to 0$, and follow the proof of Proposition \ref{prop:psipos}. The case when $f'$ is strictly greater than $\mu_{1,b}$ close to $y_\mu$ on the left is similar.

\bigskip

If $f'$  crosses $\mu_{1,b}$ both on $I_-$ and $I_+$, take $\ell \in I_-$ with $f'''(\ell) = 0$ and search for $r$ in the  opposite subinterval in $I_+$.

\bigskip
We now show that $\overline u_{nf}$ with $f'''(\ell)=0$ is a {\it regular} nonfold. From hypothesis $(2)$, $f''(\ell) \ne 0 $ and  $\nabla \lambda_1(\overline u_{nf}) = f''(\overline u_{nf}) \phi_1^2(\overline u_{nf}) \ne 0$. Thus the gradients of $\lambda_1$ and $\delta_1$ are dependent if and only if $c . \nabla \lambda_1(\overline u_{nf}) = \nabla \delta_1 (\overline u_{nf})$, for $c \in \RR$. From the expression for $\nabla \delta_1$ in Proposition \ref{prop:Lambda},
\[  c f'' (\overline u_{nf}) \phi_1^2(\overline u_{nf}) = - f'''(\overline u_{nf}) \phi^3_1(\overline u_{nf}) - 3 w f''(\overline u_{nf}) \phi_1(\overline u_{nf})  , \]
so that $(c \phi_1 + 3 w)f'' = - f'''\phi_1^2$.  On $\Omega_{\Theta(\ell,r)}$, where  $f'''(\ell)=0$ (and then $f''(\ell) \ne 0$),
\[ w(\overline u_{nf}) = c' \phi_1(\overline u_{nf}) \, , \quad \hbox{for some } \ c' \in \RR \, . \]
From the  expression for $w$ in Proposition \ref{prop:Lambda}, taking into account that $\lambda_1(\overline u_{nf})=0$ and $\delta_1(\overline u_{nf})=0$, so that $\Pi_W(f''(u) \phi_1^2(u)) = f''(u) \phi_1^2(u)$,
\[ w(\overline u_{nf}) = \big( DF(\overline u_{nf}) )_W^{-1} \, (f''(\overline u_{nf}) \phi_1^2(\overline u_{nf})) .\]
Since $DF(\overline u_{nf})$ is a local operator, one may apply it on both sides of
\[ w(\overline u_{nf})(x) = c ' \phi(\overline u_{nf})(x) \, , \quad x \in \Omega_{\Theta(\ell,r)} \ne \emptyset \, , \]
to conclude that $  f''(\ell) \phi_1^2(\overline u_{nf})(x) = 0 \, , \, x \in \Omega_{\Theta(\ell,r)}, $
clearly a contradiction. \qed
\end{proof}

\subsection{Smoothing: from $\overline u_{nf}$ to a regular  nonfold $u_{nf} \in D$} \label{section:mollifiers}

Let $D \subset B^2_b$ be a dense subspace of $\Hzb$.
We obtain a regular nonfold $u_{nf} \in D$  by mollification of $z_\ast=\overline u_{nf}$ from Proposition \ref{prop:goodnonfold1}. Consider a ball $B_{z_\ast}(r) \subset \Hzb$ in the sup norm. By Proposition \ref{prop:derlambda}, $\Lambda: B_{z_\ast}(r) \subset \Hzb \to \RR^2$ is continuous  ($ub$-continuous functions on $L^\infty$-bounded sets are continuous) and $\Lambda(z_\ast) = 0 $.

The existence of two functions $v_1, v_2 \in  D$ with invertible Jacobian
\[ \left(\begin{array}{rr}
\langle \, \nabla \lambda_1(z_\ast) \, , \, v_1 \, \rangle &
\langle \, \nabla \lambda_1(z_\ast) \, , \, v_2 \, \rangle\\
\langle \, \nabla \delta_1(z_\ast) \, , \, v_1 \, \rangle &
\langle \, \nabla \delta_1(z_\ast) \, , \, v_2 \, \rangle\\
\end{array}\right) \]
is clear, since $z_\ast$ is a regular nonfold. Let $\tilde V_\ast \subset D$ be the span of $v_1$ and $v_2$ and set $V_\ast = z_\ast + \tilde V_\ast$.
Thus $z_\ast \in \Hzb$ is a regular point of the  restriction $\Lambda_{\ast} :V_\ast \to  \RR^2$, which is a $\CCC^1$ map from Proposition \ref{prop:Lambda}. Thus, for small balls $B_{z_\ast}(\epsilon) \subset B_{z_\ast}(r)$, the topological degree satisfies $\deg(\Lambda_{\ast}, B_{z_\ast}(\epsilon) \cap V_\ast, 0) = \pm 1$ .

Take $z_m \in D \cap B_{z_\ast}(\epsilon)$ with $z_m \ub z_\ast$.   Define $V_m = z_m + \tilde V_\ast \subset D$. For large $m$, the restrictions $\Lambda_{\ast} : B_{z_\ast}(\epsilon) \cap V_\ast \to \RR^2$ and $\Lambda_{m} : B_{z_m}(\epsilon) \cap V_m \to \RR^2$ get arbitrarily close in the uniform norm after composing with the obvious translation. For a small ball $B_{z_\ast}(\epsilon)$,
\[ \deg(\Lambda_{\ast}, B_{z_\ast}(\epsilon) \cap V_\ast, 0)  = \deg(\Lambda_{m}, B_{z_\ast}(\epsilon) \cap V_m, 0) \ne 0 \,  .\]

Thus $\Lambda_{m}$ has a zero in $B_{z_\ast}(\epsilon) \cap V_m$ and we obtain a sequence of nonfolds $u_m \in V_m \subset D $ convergent to $z_\ast=\overline u_{nf}$ in $\Hzb$.

The restrictions $\Lambda_{m}: V_m \to \RR^2$ are smooth and their Jacobians at $u_m$ converge to $D\Lambda_{\ast}(z_\ast)$, by Proposition \ref{prop:Lambda}. For large $m$, $D\Lambda(u_m)$ is then surjective and any such $u_m$ is a regular nonfold $u_{nf} \in D$.

\subsection{Regular nonfolds imply cusps or worse} \label{section:transversality}

There is only one thing missing to complete the proof of Theorem  \ref{theo:c}: it is not clear that the regular, smooth nonfold $u_{nf} \in D \subset B^2_b$ satisfies the transversality condition $\tau_1(u_{nf}) = D\delta_1(u_{nf}) \ \phi_1(u_{nf}) \ne 0 $ stated in Theorem \ref{theo:c}.
We show that either the condition is satisfied on an open dense set of regular nonfolds near $u_0$, obtaining then the required cusp $u_c$, or something very implausible happens: an abundance of fibers in the domain, each taken by $F$ to a point.
\bigskip

Recall the zero levels $\CCC, \DDD \in B^2_b$  of $\lambda_1$ and $\delta_1$. From Section \ref{section:mollifiers},  the set of nonfolds ${\cal N} = \CCC \cap \DDD$ in nonempty. From the proof of Proposition \ref{prop:taubasta}, a point $u_{nf} \in {\cal N}$ for which $\tau_1(u_{nf}) \ne 0$ is automatically a cusp. Thus,

\smallskip
\centerline{ {\it $F$ has no cusps if and only if  $\tau_1$ is identically zero in ${\cal N}$.} }
\smallskip
Recall from Section \ref{section:fibers} that a fiber is the inverse under $F$ of a line parallel to the free eigenfunction $\psi_{1,b}$.

%

\bigskip

\begin{prop} \label{Nfibers} Suppose $\tau_1 \equiv 0$ in ${\cal N} \ne \emptyset$ and the hypotheses of Theorem \ref{theo:c}. Then ${\cal N}$ is foliated by fibers, each being sent to a single point by $F$.
\end{prop}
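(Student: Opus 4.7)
The plan is to view $u \mapsto \phi_1(u)$ as a smooth vector field on $B^2_b$, justified by Proposition \ref{prop:fantasma}, to establish that under the standing hypothesis $\tau_1 \equiv 0$ on ${\cal N}$ this field is tangent to ${\cal N}$, and then to argue that its integral curves sweep out entire fibers along which $F$ is constant. The tangency step is almost automatic: the proof of Proposition \ref{prop:taubasta} shows that the gradients $\nabla\lambda_1$ and $\nabla\delta_1$ are linearly independent at every point of ${\cal N}$, so near each $u \in {\cal N}$ the set ${\cal N}$ is an embedded codimension-two $C^1$ submanifold with tangent space $\ker D\lambda_1(u) \cap \ker D\delta_1(u)$. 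By definition, $D\lambda_1(u)\phi_1(u) = \delta_1(u)$ and $D\delta_1(u)\phi_1(u) = \tau_1(u)$, and both vanish on ${\cal N}$ under our hypotheses, so $\phi_1(u) \in T_u {\cal N}$.

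Next I would invoke standard Banach-space ODE theory to obtain, for each $u_0 \in {\cal N}$, an integral curve $\alpha(s)$ with $\alpha(0) = u_0$ and $\alpha'(s) = \phi_1(\alpha(s))$, and use tangency to keep $\alpha$ inside ${\cal N}$ for small $s$. Two consequences of $\alpha \subset \CCC$ then close the local picture. First, $\frac{d}{ds} F(\alpha(s)) = DF(\alpha(s))\phi_1(\alpha(s)) = \lambda_1(\alpha(s))\phi_1(\alpha(s)) = 0$, so $F\circ\alpha$ is constant; the horizontal $W^0$-component of $F(\alpha(s))$ is therefore constant, which by the definition of fibers means that $\alpha$ is contained in the unique fiber $\gamma$ through $u_0$. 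Second, at each point of $\alpha$ (a critical point of $F$), Proposition \ref{prop:hcrit} identifies the fiber tangent with a strictly positive multiple of $\phi_1 = \alpha'$, so $\alpha$ is merely a reparameterization of an open subarc of $\gamma$.

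To upgrade from a local subarc to all of $\gamma$ I would run a clopen argument: the set $I = \{ t \in \RR : \gamma(t) \in {\cal N} \}$ is nonempty, closed in $\RR$ because ${\cal N}$ is the common zero locus of the continuous functions $\lambda_1$ and $\delta_1$ and $\gamma$ is continuous, and open because at every $t_0 \in I$ the construction of the previous paragraph, applied at $\gamma(t_0)$, produces an open $\gamma$-neighborhood of $\gamma(t_0)$ that lies in ${\cal N}$. Connectedness of $\RR$ forces $I = \RR$, i.e.\ $\gamma \subset {\cal N}$. Since $\lambda_1 \equiv 0$ on $\gamma$, Proposition \ref{prop:hcrit} gives $h'(\gamma(t)) = p(\gamma(t))\lambda_1(\gamma(t)) = 0$, so the height $h$ is constant on $\gamma$; combined with the constancy of the $W^0$-component, this shows $F(\gamma)$ reduces to a single point, and every fiber meeting ${\cal N}$ is entirely contained in ${\cal N}$, yielding the claimed foliation.

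The principal obstacle I expect is not any hard computation but the careful justification of the invariant-manifold step in infinite dimensions: one needs $\phi_1$ to be locally Lipschitz as a map $B^2_b \to B^2_b$ near ${\cal N}$ (supplied by the smoothness half of Proposition \ref{prop:fantasma} together with the elliptic regularity remark preceding it), and one needs ${\cal N}$ to be a genuine embedded $C^1$ submanifold in a neighborhood of each of its points so that tangency at the initial point actually traps the Banach-space flow inside ${\cal N}$. Both ingredients are standard once the transversality of $\nabla\lambda_1$ and $\nabla\delta_1$ on ${\cal N}$ and the regularity of $\Lambda$ from Proposition \ref{prop:Lambda} are in hand.
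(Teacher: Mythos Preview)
Your proposal is correct and follows essentially the same route as the paper: show that ${\cal N}$ is a codimension-two submanifold via the linear independence of $\nabla\lambda_1$ and $\nabla\delta_1$ (borrowed from the proof of Proposition~\ref{prop:taubasta}), use $\delta_1 = \tau_1 = 0$ on ${\cal N}$ to see that $\phi_1$ is tangent there, integrate, identify the integral curves with fibers via Proposition~\ref{prop:hcrit}, and conclude from $h' = p\,\lambda_1 = 0$ that $F$ collapses each such fiber to a point. The only cosmetic difference is in globalization---the paper normalizes the vector field to $\phi_1(u)/\|\phi_1(u)\|_{B^2}$ so that its integral curves exist for all time and sweep out entire fibers directly, whereas your clopen argument on the fiber parameter achieves the same end without needing global solutions of the $\phi_1$-flow.
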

\begin{proof} As in the proof of Proposition \ref{prop:taubasta}, ${\cal N}$ is the transversal intersection of the sets $\CCC$ and $\DDD$, which are manifolds near ${\cal N}$, since the gradients $\nabla \lambda_1(u)$ and $\nabla \delta_1(u)$ are linearly independent.
The differential equation in $B^2_b$
\[ u ' \, = \,  \tilde \phi_1(u) \, = \, \frac{\phi_1(u)}{\| \phi_1(u) \|_{B^2}} \, , \quad u(0) = u_0 \]
has a globally defined solution $\gamma= \{ u(t), t \in \RR\}$. Also, the vector field $u \mapsto \tilde \phi_1(u)$ restricts to a vector field tangent to ${\cal N}$. Indeed, $\delta_1(u) = 0$ and $\tau_1(u)=0$ imply that $\phi_1(u)$ is orthogonal to $\nabla \lambda_1(u)$ and $\nabla \delta_1(u)$ respectively, so that $\phi(u) \in TN_u$. Thus if $u(0) \in {\cal N}$ then $\gamma \subset N$ and $\gamma$ consists only of critical points.

From Proposition \ref{prop:hcrit}, at a critical point $u(t_c)$ the  kernel vector $\phi_1(u(t_c))$ of $DF(u(t_c))$ is the tangent vector to the fiber at $u(t_c)$. Thus integration of the vector field above also yields the fibers through points in ${\cal N}$: ${\cal N}$ is indeed foliated by fibers.
Finally, from Proposition \ref{prop:hcrit}, along fibers $u(t) \in {\cal N}$,
 \[ \frac{d}{dt} \, h(u(t)) \e p(u(t)) \ \lambda_1(u(t)) \e 0 \, . \]
The height $h(u(t))$ does not change: $F$ takes each fiber  to a single point.
\qed
\end{proof}

\bigskip
The proof of Theorem \ref{theo:c}  is now complete, up to the verification in Proposition \ref{prop:noworse} that under certain additional hypotheses we are sure to find cusps for $F$.

\bigskip
Are there functions as suggested by the alternative in the theorem? At a nonfold $u_{nf}$, $F$ would take the (local) normal form near  the origin:  \[ \Psi(w, x, y, z) = (w, \ x,\ y, x \ \alpha(w,x,y,z) + y \ \beta(w,x,y,z)), \, \quad w \in W,\  (x,y,z) \in \RR^3 \, ,\]
where $W$ is a Banach space.
The vertical axes $(w,0,0,z)$ are taken to $(w,0,0,0)$: they all collapse under $F$.  The simplicity of the example is misleading: its rarity is due to the fact that the collapse happens for an open set of $W$. What is not clear is that $\Psi$ indeed is a local form of some function $F$ near some $u_{nf}$. There are nonlinearities for which a few fibers have locally constant height functions \cite{TZ}), but they are far from being as abundant as in the situation above.

\bigskip

\begin{prop}\label{prop:noworse} Consider the hypotheses of Theorem \ref{theo:c}. Each of the two possibilities implies the existence of a cusp of $F$.
\begin{enumerate}
\item $F: B^2_b \to B^0$ is proper.
\item $\overline u_{nf} = \overline u(\ell,r,\Theta(\ell,r)) \in \overline{{\cal V}}$ is a zero of $\Lambda$ with $f'''(\ell), f'''(r) \ge 0$.
\end{enumerate}
\end{prop}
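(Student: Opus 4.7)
The plan is to rule out, in each case, the ``collapsing arcs'' alternative of Theorem \ref{theo:c} via Proposition \ref{Nfibers}: if $F$ had no cusp in $D$, then ${\cal N}$ would be foliated by fibers, each mapped by $F$ to a single point.

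For (1), I argue by contradiction. Assume $F$ has no cusp in $D$. By Proposition \ref{Nfibers} there is a fiber on which $F$ is constant. From Proposition \ref{prop:smoothfiber}, such a fiber has the form $\{u(z,t) = w(z,t) + t\psi_{1,b} : t \in \RR\}$ with $w(z,t)\perp\psi_{1,b}$ in $L^2$, so $\|u(z,t)\|_{L^2}^2 \ge t^2$ and the fiber is unbounded in $B^2_b$. Its image is a single point, but its preimage contains the whole unbounded fiber, contradicting properness of $F$.

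For (2), the plan is to show $\tau_1(\overline u_{nf}) < 0$ and then transfer this to a cusp in $D$ by mollification. Using the formula for $\nabla \delta_1$ from Proposition \ref{prop:Lambda}, and noting that $\delta_1(\overline u_{nf}) = 0$ forces $\Pi_W(f''\phi_1^2) = f''\phi_1^2$,
\[ \tau_1(\overline u_{nf}) \, = \, - \int_\Omega f'''(\overline u_{nf})\, \phi_1^4 \, - \, 3 \int_\Omega f''(\overline u_{nf})\, w \, \phi_1^2 \, , \]
where $w \in W$ solves $DF(\overline u_{nf})|_W\, w = f''(\overline u_{nf})\phi_1^2$. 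The first integral equals $-f'''(\ell)\int_{\Omega_\theta}\phi_1^4 - f'''(r)\int_{\Omega^c_\theta}\phi_1^4 \le 0$ by the sign hypothesis. Pairing the equation for $w$ with $w$ itself gives $\int_\Omega w\, f''\, \phi_1^2 = \langle DF(\overline u_{nf})w, w\rangle$, which is $\ge 0$ since $\lambda_1 = 0$ is the lowest eigenvalue of $DF(\overline u_{nf})$ with eigenvector $\phi_1$ and $w \in W = \phi_1^\perp$, with equality iff $w \equiv 0$. If $w \equiv 0$ then $f''(\overline u_{nf})\phi_1^2 \equiv 0$, and since $\phi_1 > 0$ this forces $f''(\ell) = f''(r) = 0$; by hypothesis $(2)$ then $f'''(\ell), f'''(r) \ne 0$, so both are strictly positive and the first integral is strictly negative. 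Either way $\tau_1(\overline u_{nf}) < 0$.

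Finally, the mollification of Section \ref{section:mollifiers} produces a regular nonfold $u_{nf}\in D$ with $u_{nf} \ub \overline u_{nf}$; by the $ub$-continuity of $\tau_1$ from Proposition \ref{prop:Lambda}, $\tau_1(u_{nf}) < 0$, and Proposition \ref{prop:taubasta} declares $u_{nf}$ a cusp of $F$. The main obstacle is that Section \ref{section:mollifiers} takes as input a \emph{regular} nonfold, while regularity of $\overline u_{nf}$ is not postulated in (2). I would handle this by perturbing slightly in $(\ell,r)$ along the one-dimensional locus $\{\Lambda(\overline u(\ell,r,\Theta(\ell,r))) = 0\}$ to reach a nearby regular two-valued nonfold on which $\tau_1 < 0$ is preserved by continuity; equivalently, one adapts the independence argument of Proposition \ref{prop:goodnonfold1} by observing that linear dependence of $\nabla\lambda_1(\overline u_{nf})$ and $\nabla\delta_1(\overline u_{nf})$ forces $w$ to take the pointwise form $\alpha\phi_1 + \beta \phi_1^2$ with $\beta$ the same on both sectors, and substituting into the elliptic equation for $w$ yields the piecewise-constant identity $|\nabla\phi_1|^2/\phi_1^2 = (\beta f'(u) - f''(u))/(2\beta)$, which is excluded for generic $\Omega$, $\ell$, $r$.
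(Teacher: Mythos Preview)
Your treatment of (1) and your computation of $\tau_1(\overline u_{nf})<0$ in (2) are essentially the paper's: the paper phrases the second term as $\langle -3wf''\phi_1,\phi_1\rangle<0$ because $(DF)_W^{-1}$ is a positive operator on $\phi_1^\perp$, which is exactly your pairing $\langle DF(\overline u_{nf})w,w\rangle\ge 0$. Your separate handling of the edge case $w\equiv 0$ is extra care the paper omits.

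Where you diverge is in establishing that $\overline u_{nf}$ is a \emph{regular} nonfold, so that the mollification of Section~\ref{section:mollifiers} applies. Your proposed fixes --- perturbing along the zero locus in $(\ell,r)$, or deriving a piecewise elliptic identity ``excluded for generic $\Omega$, $\ell$, $r$'' --- are not proofs: $\Omega$ is fixed, and you give no argument that the identity actually fails. The paper's route is a one-line observation that you already have all the ingredients for. At a nonfold,
\[
\delta_1(\overline u_{nf}) \e \langle\,\nabla\lambda_1(\overline u_{nf})\,,\,\phi_1(\overline u_{nf})\,\rangle \e 0,
\]
whereas you have just shown
\[
\tau_1(\overline u_{nf}) \e \langle\,\nabla\delta_1(\overline u_{nf})\,,\,\phi_1(\overline u_{nf})\,\rangle \,\ne\, 0.
\]
Thus $\nabla\lambda_1(\overline u_{nf})$ is orthogonal to $\phi_1(\overline u_{nf})$ while $\nabla\delta_1(\overline u_{nf})$ is not; provided $\nabla\lambda_1(\overline u_{nf})\ne 0$, the two gradients are automatically linearly independent and $\overline u_{nf}$ is regular. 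No perturbation and no extra elliptic analysis are needed --- the very fact $\tau_1\ne 0$ that you worked to establish already delivers regularity for free.
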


Hypothesis $(4)$ implies properness of $F$ (\cite{CTZ1}) and thus forces cusps.

\begin{proof} From the hypotheses, a fiber $u(t)$ through  $u(t_0) \in {\cal N}$ reaches arbitrary heights $t$. If there are no cusps, on the other hand, the fibers in ${\cal N} \ne \emptyset$ are taken to a single point, as just shown above. This cannot happen if $F$ is proper.

For the second possibility, we have $\lambda_1(\overline u_{nf})=0$ and $\delta_1(\overline u_{nf}) = 0$, so that $\Pi_W(f''(\overline u_{nf}) \phi_1^2(\overline u_{nf})) = (f''(\overline u_{nf}) \phi_1^2(\overline u_{nf}))$.
 We show $\tau_1(\overline u_{nf})  \ne 0$. By Proposition
\ref{prop:Lambda}, omitting the dependence in $\overline u_{nf}$,
 \[ \tau_1 =  \langle \, \nabla \delta_1 \, , \phi_1 \rangle = \langle - f''' \phi_1^3 - 3 w f'' \phi_1 \, , \, \phi_1 \, \rangle
 \quad \hbox{for} \quad  w = \big( DF  \big)_W^{-1} (f'' \phi_1^2) \, .\]
Now, $ \big( DF  \big)_W^{-1}: \phi_1^\perp \to \phi_1^\perp$ is a positive operator, since from Proposition \ref{prop:lambdacont} only the smallest eigenvalue $\lambda_1$ can be zero. Thus $\langle - 3 w f'' \phi_1 \, , \, \phi_1 \, \rangle < 0 $. We also have $ \langle - f''' \phi_1^3  \, , \, \phi_1 \, \rangle \le 0$, since $f'''(\ell), f'''(r) \ge 0$.

A cusp $u_c$ is obtained by smoothing of $\overline u_{nf}$  once we know the independence of $\nabla \lambda_1(\overline u_{nf})$ and $\nabla \delta_1(\overline u_{nf})$.
Since $\nabla \lambda_1(\overline u_{nf}) \ne 0$ and $\delta_1(\overline u_{nf}) = 0$, this follows from
\[ \tau_1(\overline u_{nf}) = \langle \nabla \delta_1(\overline u_{nf}), \phi_1(\overline u_{nf}) \rangle \ne 0 \, .  \quad  \eqno{\blacksquare} \]

\end{proof}

\bigskip

\section{Proof of Theorem \ref{theo:3sol}} \label{section:cusp2}

Most of the argument follows the proof of Theorem \ref{theo:c}, with simple adaptations. Again, we consider standard boundary conditions. Hypothesis $(H_k)$ guarantees that $\mu_{k,b}$ is simple, so that, from Section \ref{section:spectrumsmooth} the functional $\lambda_k$, the $k$-th eigenvalue of $DF(u)$, and the corresponding normalized eigenfunction $\phi_k$ are well defined and appropriately smooth in a neighborhood $U \subset \Hzb$ of $u_0$ for which $f'(u_0) \equiv  \mu_{k,b}$. The characterization of a cusp $u_c \in U$ is analogous: it is a zero of \[ \Lambda: U \cap B^2_b \to \RR^2 \, , \quad  \Lambda(u) \e (\lambda_k(u), \delta_k(u)  \e \ D \lambda_k(u) \ \phi_k(u) )\]
for which
$D \Lambda(u_c): B^2_b \to \RR^2$ is surjective and $\tau_k(u_c) = D\delta_k(u_c) \, \phi_k(u_c) \ne 0 $.

There are small changes to be made in the counterpart to Proposition \ref{prop:goodnonfold1}.

\begin{prop} \label{prop:goodnonfold2} Suppose $(H_k)$. Then there is a regular nonfold $\overline u_{nf} \in \overline{{\cal V}}$.
\end{prop}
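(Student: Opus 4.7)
The plan is to adapt the construction of Proposition \ref{prop:goodnonfold1} to hypothesis $(H_k)$, exploiting the fact that the two distinguished constants $x_\mu, y_\mu$ furnished by $(H_k)$ already lie on the level set $\{f' = \mu_{k,b}\}$. Fix a base point $p \in \Omega$ (to be chosen below) and consider the one-parameter family of two-valued functions
\[
\overline u(\theta) \,=\, x_\mu\, \chi_\theta \,+\, y_\mu\, \chi_\theta^c, \qquad \theta \in [0, 2\pi].
\]
The crucial observation is that $f'(\overline u(\theta)) \equiv \mu_{k,b}$ on all of $\Omega$, so $DF(\overline u(\theta)) = -\Delta_b - \mu_{k,b}$ is actually independent of $\theta$ and its $k$-th eigenvalue is $0$ by the simplicity of $\mu_{k,b}$ in $(H_k)$. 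Thus $\lambda_k(\overline u(\theta)) \equiv 0$ identically, and no $\mu_{k,b}$-analog of Lemma \ref{lemma:balance} is required --- the one real simplification over the ground-state case.

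To drive $\delta_k$ to zero, vary $\theta$. With $\phi_k$ the fixed $L^2$-normalized eigenfunction of $\mu_{k,b}$,
\[
\delta_k(\overline u(\theta)) \,=\, -f''(x_\mu)\!\int_{\Omega_\theta}\!\phi_k^3 \,-\, f''(y_\mu)\!\int_{\Omega_\theta^c}\!\phi_k^3.
\]
At $\theta = 0$ and $\theta = 2\pi$ the two endpoint values reduce to $-f''(y_\mu)\int_\Omega \phi_k^3$ and $-f''(x_\mu)\int_\Omega \phi_k^3$, with opposite signs by $(H_k)$ whenever $\int_\Omega \phi_k^3 \neq 0$. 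The $ub$-continuity of $\delta_k$ (Proposition \ref{prop:derlambda}) and the intermediate value theorem produce some $\theta^* \in (0, 2\pi)$ with $\delta_k(\overline u(\theta^*)) = 0$. In the exceptional case $\int_\Omega \phi_k^3 = 0$ (which may arise for $k \ge 2$ on symmetric domains), I would choose the base point $p$ so that $\theta \mapsto \int_{\Omega_\theta}\phi_k^3$ is nonconstant --- the ``appropriate choice of $p$'' anticipated in Section \ref{four} --- and the opposite signs of $f''(x_\mu), f''(y_\mu)$ still force $\delta_k$ to vanish at some $\theta^*$. Set $\overline u_{nf} = \overline u(\theta^*)$.

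It remains to check regularity of $\overline u_{nf}$, i.e., that $\nabla\lambda_k(\overline u_{nf})$ and $\nabla\delta_k(\overline u_{nf})$ are linearly independent in $\Hzb$. The scheme mirrors the last part of Proposition \ref{prop:goodnonfold1}: assume $c\,\nabla\lambda_k = \nabla\delta_k$ and substitute the formulas from Proposition \ref{prop:Lambda} to obtain the pointwise identity
\[
(c\,\phi_k + 3w)\, f''(\overline u_{nf}) \,=\, -f'''(\overline u_{nf})\, \phi_k^2, \qquad w \,=\, \bigl(DF(\overline u_{nf})\bigr)_W^{-1}\!\bigl(f''(\overline u_{nf})\phi_k^2\bigr),
\]
and apply the local operator $DF(\overline u_{nf}) = -\Delta_b - \mu_{k,b}$ on each of the constant regions $\Omega_{\theta^*}$ and $\Omega_{\theta^*}^c$ (where $\overline u_{nf}$ equals $x_\mu$ or $y_\mu$ respectively). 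Using $DF\phi_k = 0$ and $DF w = f''(\overline u_{nf})\phi_k^2$ (valid because $\delta_k(\overline u_{nf}) = 0$ gives $\Pi_W(f''\phi_k^2) = f''\phi_k^2$) should yield on each region a differential relation incompatible with $f''(x_\mu), f''(y_\mu) \neq 0$.

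The main obstacle is this last step. Unlike in Proposition \ref{prop:goodnonfold1}, the points $x_\mu, y_\mu$ are preassigned by $(H_k)$ and there is no freedom to arrange $f'''(x_\mu) = 0$; the term $f'''(\overline u_{nf})\phi_k^2$ therefore persists and turns the clean ``$w \propto \phi_k$'' collapse from the $k = 1$ case into a more elaborate PDE constraint on $\phi_k$ (involving $|\nabla\phi_k|^2$ after one application of $DF$). I expect to overcome this by a small perturbation: shift $x_\mu$ along the smooth curve $\{f' = \mu_{k,b}\}$ (locally smooth since $f''(x_\mu) \neq 0$) to a nearby $\ell$ at which $f''' = 0$, and restore $\lambda_k(\overline u) = 0$ by a local Lyapunov--Schmidt reduction around the constant function $x_\mu$, well-posed because $\mu_{k,b}$ is isolated and simple. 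Once both $x_\mu$ and $y_\mu$ are replaced by such generic perturbations, the regularity argument closes exactly as in the $k = 1$ case.
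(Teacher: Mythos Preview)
Your construction of the nonfold $\overline u_{nf}$ is essentially the paper's: use $x_\mu,y_\mu$ so that $f'(\overline u(\theta))\equiv\mu_{k,b}$ forces $\lambda_k\equiv 0$, then vary $\theta$ to kill $\delta_k$. (The paper handles the degenerate case $\int_\Omega\phi_k^3=0$ by simply taking one of the constant endpoints as $\overline u_{nf}$, rather than by moving $p$.)

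The gap is in your regularity argument. Your proposed fix---perturbing $x_\mu$ ``along the smooth curve $\{f'=\mu_{k,b}\}$'' to reach a point where $f'''=0$---does not make sense: $f:\RR\to\RR$, so $\{f'=\mu_{k,b}\}\subset\RR$ is a discrete set near $x_\mu$ precisely because $f''(x_\mu)\ne 0$. There is no curve to move along, and nothing forces a zero of $f'''$ to lie anywhere near $x_\mu$. If instead you move $\ell$ off the level set and try to rebalance, you lose $\phi_k(\overline u)=\psi_{k,b}$ and the whole computation changes.

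The paper avoids this entirely and works directly with the given $x_\mu,y_\mu$. From the dependence relation one gets, after dividing by $f''(\overline u_{nf})\psi_{k,b}$ (both nonzero a.e.),
\[
-\,\frac{f'''(\overline u_{nf})}{f''(\overline u_{nf})}\,\psi_{k,b}^2 \;-\; 3w \;=\; d\,\psi_{k,b}.
\]
Now $w,\psi_{k,b}\in H^2_b$, so the right-hand side and $3w$ are regular across the interface; hence the two-valued ratio $f'''/f''$ must take a \emph{single} value $a$ near the base point $p$ (chosen so that $\psi_{k,b}(p)\ne 0$). Applying $DF(\overline u_{nf})=-\Delta_b-\mu_{k,b}$ then gives $a\,(-\Delta_b-\mu_{k,b})\psi_{k,b}^2=3f''(\overline u_{nf})\psi_{k,b}^2$; the left side is smooth, forcing $f''(\overline u_{nf})$ to be constant, which contradicts $f''(x_\mu)f''(y_\mu)<0$. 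The missing idea is this regularity-across-the-jump step, which replaces the unavailable freedom $f'''=0$.
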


\begin{proof} For the free eigenfunction $\psi_{k,b}$, take $p \in \Omega$ for which $\psi_{k,b}(p) \ne 0$ (this is automatic for $k=1$, since the ground state is positive).

The required $\overline u_{nf}$ is of the form $\overline u = \overline u (\ell = x_{\mu}, r = y_{\mu},\theta) = x_{\mu} \chi_\theta + y_{\mu} \chi_\theta^c$, for some  $\theta = \theta_0$. For {\it any} choice of $\theta$, $f'(\overline u) \equiv \mu_{k,b}$, so that $\lambda_k(\overline u) = 0$ and $f''(\overline u)$ is nonzero on $\chi_\theta$ and $\chi_\theta^c$. For $\theta = 0 $ or $2\pi$, $\overline u$  is constant (either $x_{\mu}$ or $y_{\mu}$) and  $\delta_k(x_{\mu})  \delta_k(y_{\mu}) \le 0$. If it is zero, take the $\overline u$ for which $\delta_k=0$ to be $\overline u_{nf}$ (in this case, $\phi_k^3$ integrates to 0 in $\Omega$). Otherwise, by the $ub$-continuity of $\delta_k$ in Proposition \ref{prop:Lambda}, an intermediate $\theta_0$ yields $\overline u_{nf}$.

We now show regularity. From $f'(\overline u_{nf}) = \mu_{k,b}$, we have
\[ \lambda_k(\overline u_{nf})=0 \, ,\quad   \phi_k(\overline u_{nf}) = \psi_{k,b}\, , \quad DF(\overline u_{nf})=(- \Delta_b- \mu_{k,b}) \, . \]
Also, $\delta_k(\overline u_{nf}) = 0$ means that $\langle \, f''(\overline u_{nf}) \psi_{k,b}^2 \, , \, \psi_{k,b} \rangle =0 $, so that $f''(\overline u_{nf}) \psi_{k,b}^2 \,  \in W$. From  $(H_k)$, $\nabla \lambda_k(\overline u_{nf}) = f''(\overline u_{nf}) \, \psi_{k,b}^2 \ne 0$ which combined with the linear dependence of the gradients $\nabla \lambda_k(\overline u_{nf})$ and $\nabla \delta_k(\overline u_{nf})$ imply collinearity,
\[ \nabla \delta_k (\overline u_{nf}) \, = \, d \ \nabla \lambda_k(\overline u_{nf}), \quad \hbox{for} \ \ d \in \RR \, ,\]
which, as we shall see, leads to a contradiction. By Proposition \ref{prop:Lambda},
\[ - f'''(\overline u_{nf})\,  \psi_{k,b}^3 - 3\,  w\,  f''(\overline u_{nf})\,  \psi_{k,b} \, = \, d \, f'' (\overline u_{nf})\, \psi_{k,b}^2, \]
\[  w \, = \, (DF(\overline u_{nf}) - \lambda_k(\overline u_{nf}))|_W^{-1} \Pi_W \, ( f''(\overline u_{nf}) \, \psi_{k,b}^2 ) \]
\[ =
\, (- \Delta_b- \mu_{k,b})|_W^{-1} \, ( f''(\overline u_{nf}) \psi_{k,b}^2 ) \, \in \, H^2_b \, ,\]
so that (since $f'' \ne 0$ and $\psi_{k,b} = 0$ on a set of measure zero)
\[ - \frac{f'''(\overline u_{nf})}{f''(\overline u_{nf})} \, \psi_{k,b}^2 - 3 \, w \, = \, d \, \psi_{k,b} \, . \]
Now, $w$ and $\psi_{k,b}$ belong to  $H^2_b$, and $\psi_{k,b}(p) \ne 0$, so the two-valued fraction (at $p$, and thus, throughout the jump between $\Omega_{\Theta}$ and $\Omega_{\Theta}^c$) is actually a constant, say $a \in \RR$. Applying $DF(\overline u_{nf})=(- \Delta_b- \mu_{k,b})$,
\[ a \, (- \Delta_b- \mu_{k,b}) \psi_{k,b}^2 - 3 \, f''(\overline u_{nf}) \psi_{k,b}^2 \, = d \, DF(\overline u_{nf}) \psi_{k,b} = 0 \, .\]
The first term on the left hand side is smooth, thus $f''(\overline u_{nf})$ is also a constant. But $f''$ has opposite signs at $x_{\mu}$ and $y_{\mu}$, a contradiction, so that  $\nabla \lambda_k(\overline u_{nf})$ and $\nabla \delta_k(\overline u_{nf})$ are linearly independent in $\Hzb$. \qed
\end{proof}

\medskip
Let $D \subset B^2_b$ be a dense subset of $\Hzb$.
Now mimic Section \ref{section:mollifiers} to obtain $u_{nf} \in D$ by smoothing  $\overline u_{nf}$ and then Proposition \ref{Nfibers} to complete the proof of the dichotomy in  Theorem  \ref{theo:3sol}.

We finally consider the situations in which $F$ necessarily has a cusp. For the hypothesis $k=1$ and $f'''(x_\mu), f'''(y_\mu) \ge 0$, the argument in the proof of the second case of Proposition \ref{prop:noworse} applies with no change.

Suppose then $k=1$ and $F$ proper. We follow the proof of Proposition \ref{Nfibers}, but  there are new difficulties. First notice that under hypothesis $(H_1)$, $DF(u)$ may cease to be invertible and still $\lambda_1(u) \ne 0$. For  the nonfold $\overline u_{nf}$ obtained in the proof of Proposition \ref{prop:goodnonfold2}, however, $\lambda_1(\overline u_{nf}) = 0$.
Also,  $\lambda_1$, $\phi_1 >0 $ and $\delta_1$ are still globally defined and smooth. Let $ \CCC_1 = \{ u \in B^2_b, \lambda_1(u) = 0\} \subset {\cal C}$. Since $\nabla \lambda_1(u) \ne 0$ for $u \in  \CCC_1$ (as in the proof of the proposition above), $ \CCC_1$ is a manifold. It is easy to see that some smoothing $u_{nf}$ obtained from $\overline u_{nf}$ belongs to $\CCC_1$. Define $\DDD_1$ to be the zero level of the functional $\delta_1$.

Following the proof of Proposition \ref{prop:taubasta}, every nonfold $u_{nf} \in {\cal N}_1 =  \CCC_1 \cap \DDD_1$ is regular. Again, ${\cal N}_1$ is the transversal intersection of the two manifolds $ \CCC_1$ and $\DDD_1$ near ${\cal N}_1$ as in  Proposition \ref{Nfibers}. If $u \in {\cal N}_1$ and $\tau_1(u) \ne 0$, then  $u$ is a cusp of $F$. Suppose that $\tau_1 \equiv 0$  in ${\cal N}_1$ --- we  derive a contradiction.

As in Proposition \ref{Nfibers}, the vector field $u \mapsto \tilde \phi_1(u)$ leaves ${\cal N}_1$ invariant, and each integral curve $\gamma = \{ u(t), t \in \RR \} \subset {\cal N}_1$ of $u' = \tilde \phi_1(u)$  is sent by $F$ to a single point, $F(\gamma)$. These integral curves may not be periodic. Indeed, the function $t \mapsto \langle \psi_{1,b} , u(t) \rangle$ is a height function along the curve, since it has positive derivative: for $k=1$, the eigenfunctions $\psi_{1,b}$ and $\phi_1(u(t))$ are positive. Take the integral curve $\gamma_{u_0}=\{ u(t), t \in \RR , u(0) = u_0 \} \subset {\cal N}_1$  with $F(\gamma_{u_0}) = z_0$.

The set $F^{-1}(z_0) \cap {\cal N}_1 $ may possibly disconnect but it still locally an interval: more precisely, for each point $y_0 \in F^{-1}(z_0) \cap {\cal N}_1$  there is an open neighborhood $U_{y_0} \subset B^2_b$ of $y_0$ so that $F^{-1}(z_0) \cap U_{y_0}$ is an arc $\tilde\gamma_{y_0}$ (i.e., $\tilde\gamma_{y_0}$ is diffeomorphic to an open interval). This follows from a local form of the construction of fibers. Write a Lyapunov-Schmidt decomposition
\[ B^2_b =  W \oplus \langle \phi_1(y_0) \rangle \, , \quad  B^0 = \Ran DF(y_0) \oplus \langle \phi_1(y_0) \rangle \]
where $\langle \phi_1(y_0) \rangle $ is the line spanned by $\phi_1(y_0)$. For small $w \in W$, $t \in \RR$, split
\[ F (y_0 + w + t \phi_1 (y_0)) \, = \, \Pi \, F (y_0 + w + t \phi_1 (y_0)) + (I - \Pi ) \, F (y_0 + w + t \phi_1 (y_0)) \, ,\]
where $\Pi: B^0 \to \Ran DF(y_0)$ is the projection with $\ker \Pi = \langle \phi_1(y_0) \rangle$. From simple spectral arguments, the inverse function theorem applies and we learn that, for each fixed $t \sim 0$, the map $w \mapsto \Pi F (y_0 + w + t \phi_1(y_0))$ is a local diffeomorphism near $w=0$ to $\Ran DF(y_0)$. In particular, the inversion of a small segment $z_0 + t\langle \phi_1(y_0) \rangle, t \sim 0$, obtains the small isolated arc $\tilde\gamma_{y_0} \in B^2_b \cap U_{y_0}$ through $y_0$ --- in a sense, this arc is a local chunk of a fiber.

Take now $y_0 = u_0$ in the construction above and we have, for $F(u_0) = z_0$,
\[ \gamma_{u_0} \cap U_{u_0} \subset \tilde\gamma_{u_0} = F^{-1}(z_0) \cap U_{u_0} \, . \]

Now, suppose that the closure $\overline \gamma_{u_0} \subset F^{-1}(z_0)$ is compact. Then, at integer times $t=n$, the sequence $u(n)$ accumulates to $u_\ast \in B^2_b$. By continuity, since $u(n) \in {\cal N}_1$, we have that $u_\ast \in {\cal N}_1$ and $F(u_\ast) = z_0$.
Consider the solution $\gamma_{u_{\ast}}$ of $u' = \tilde \phi(u), u(t_0) = u_{\ast}$. For large $n$,
\[ u(n) \in \gamma_{u_\ast} \subset \tilde\gamma_{u_\ast} \subset F^{-1}(z_0) \cap U_{u_\ast} \, . \]

In particular, the points $u(n)$ lie in the single arc through their accumulation $u_\ast \in \gamma_{u_\ast} \cap U_{u_\ast}$ for some neighborhood $U_{u_\ast}$. More, since the tangent vector $\tilde \phi(u)$ is of norm one, the points $u(n)$ are far apart along the orbit. Accumulation is not possible then, since the orbit admits a height function: $\overline \gamma$ is not compact.

Finally, $F^{-1}(z_0)$   is also not compact (otherwise the closed subset $\overline \gamma$ would be too), contradicting the properness of $F$. The proof of Theorem \ref{theo:3sol} is now complete.

\section{Appendix: well definedness and continuity} \label{appendix:continuity}

We  prove that the basic functions in the text satisfy the required continuity and differentiability. We assume standard boundary conditions.

\subsection{The eigenpair $(\lambda^q, \phi^q)$ for potentials $q \in L^\infty$} \label{section:spectrumsmooth}

We use a result in \cite{Z} yielding smoothness of eigenvalues and eigenvectors in the case of interest.  For $X \subset Y$  real Banach spaces, let ${\cal B} = {\cal B}(X,Y)$ be the Banach space of bounded linear transformations from $X$ to $Y$, with the operator norm.

\medskip

\begin{prop} \label{prop:abstract} Let $T_0 \in {\cal B}$ have eigenvalue $\lambda_0 \in \RR$ and eigenvector $\phi_0 \in X$, so that $(T_0 - \lambda_0 I)\phi_0=0$. Assume that $T_0 - \lambda_0 I$ is a Fredholm operator of index zero with one dimensional kernel, and that $\phi_0 \notin \Ran (T_0 - \lambda_0 I)$. Let $\ell \in X^\ast$ be a linear functional for which $\ell(\phi_0)=1$ and set $V_2 = \phi_0 + \Ker \ell$. Then there is an open neighborhood $U \subset {\cal B}$ of $T_0$ and unique analytic maps $\lambda:U \to \RR$ and $\phi: U \to V_2$ for which $(T - \lambda(T)I)\, \phi(T) = 0$ and $\lambda(T_0) = \lambda_0, \, \phi(T_0)=\phi_0$.
\end{prop}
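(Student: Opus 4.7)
The plan is to set this up as a direct application of the analytic implicit function theorem in Banach spaces. Consider the map
\[ G: \BB \times \RR \times V_2 \to Y \, , \quad G(T, \lambda, \phi) \e (T - \lambda I)\, \phi \, . \]
This is a polynomial (hence real-analytic) expression in its arguments, and $G(T_0, \lambda_0, \phi_0) = 0$ by hypothesis. Solving $(T - \lambda(T) I)\phi(T) = 0$ in the normalization $\phi(T) \in V_2$ amounts to solving $G(T,\lambda,\phi)=0$ for $(\lambda,\phi)$ as a function of $T$ near the base point, which the analytic IFT delivers provided the partial derivative
\[ A \e D_{(\lambda,\phi)} G(T_0, \lambda_0, \phi_0) : \RR \times \Ker \ell \to Y \, , \quad A(\mu, \psi) \e -\mu \, \phi_0 + (T_0 - \lambda_0 I)\, \psi \]
is a topological isomorphism.

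The bulk of the argument is verifying that $A$ is bijective, and this is where the two structural hypotheses on $T_0 - \lambda_0 I$ come in. For injectivity, if $A(\mu,\psi)=0$ then $\mu \phi_0 \in \Ran(T_0 - \lambda_0 I)$; since $\phi_0 \notin \Ran(T_0 - \lambda_0 I)$, $\mu=0$, so $\psi \in \Ker(T_0 - \lambda_0 I) = \langle \phi_0 \rangle$, and the normalization $\ell(\psi)=0$ together with $\ell(\phi_0)=1$ forces $\psi=0$. For surjectivity, the Fredholm-of-index-zero hypothesis with one-dimensional kernel says $\Ran(T_0 - \lambda_0 I)$ is closed of codimension one; combined with $\phi_0 \notin \Ran(T_0 - \lambda_0 I)$, this yields the topological direct sum decomposition $Y = \Ran(T_0 - \lambda_0 I) \oplus \langle \phi_0 \rangle$. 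Any $y \in Y$ can thus be written $y = -\mu \phi_0 + r$ with $r = (T_0 - \lambda_0 I)\psi_1$ for some $\psi_1 \in X$; shifting by an appropriate multiple of $\phi_0$ to land in $\Ker \ell$ produces the required $\psi$. Continuity of the inverse is automatic from the open mapping theorem.

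With $A$ invertible, the analytic implicit function theorem (as in the reference \cite{Z}) produces an open neighborhood $U \subset \BB$ of $T_0$ and unique analytic maps $\lambda: U \to \RR$ and $\phi: U \to V_2$ with $G(T, \lambda(T), \phi(T)) = 0$ and the prescribed values at $T_0$. The uniqueness clause in the IFT gives the uniqueness statement.

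The main potential obstacle is purely bookkeeping: checking that the hypotheses on $T_0 - \lambda_0 I$ really do yield the direct sum decomposition in an honest topological sense (i.e., that the projection onto $\langle \phi_0 \rangle$ along the range is bounded), which relies on the closedness of the range built into the Fredholm hypothesis. Once that is in place, no analysis beyond the IFT is required.
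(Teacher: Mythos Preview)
Your argument is correct and is precisely the standard implicit-function-theorem proof; the paper does not prove this proposition at all but simply cites it from \cite{Z}, where the same IFT argument is carried out. One minor point worth making explicit is that the inclusion $X \hookrightarrow Y$ is continuous (implicit in $I \in \BB(X,Y)$), so that $G$ is a bounded polynomial map and $\Ker \ell$ is closed in $X$, but these are exactly the bookkeeping checks you flag at the end.
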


The operators $T_q$ of Proposition \ref{prop:lambdacont} are smooth functions of $q \in L^\infty(\Omega)$: the linear map taking $q$  to `multiplication by $q$'
%
\[
\Phi_q :  L^\infty  \to  {\cal B}(H^2_b, H^0) \ , \quad
  q  \mapsto  M_q
\]
\noindent is clearly bounded.   For $q \in L^\infty$, Propositions \ref{prop:lambdacont} and \ref{prop:smooth} implies  the hypothesis of the proposition above, and thus   $\lambda^q_1: L^\infty \to \RR$ and $\phi^q_1: L^\infty \to H^0$ are smooth.

\subsection{The eigenpair $(\lambda, \phi)$ as a function of $u \in \Hzb$} \label{subsection:spectrumu}

Let $f: \RR \to \RR$ be smooth, $B^2_b = H^2_b \cap C^{2,\alpha}(\Omega) $.  The function $F: B^2_b \to B^0$ is smooth from Proposition \ref{prop:smooth} and we are interested in $\lambda(u)$ and $\phi(u)$, a simple eigenvalue and corresponding normalized eigenvector of $DF(u)= - \Delta_b- f'(u) $ for the potential $q = f ' (u)$. The functions $\lambda(u)$ and $\phi(u)$ can be either $\lambda_1 = \lambda^q_1$ and $\phi_1$ or $\lambda_k= \lambda_k^q$ and $\phi_k$, depending on the context. The $L^2$ inner product $\langle u, v \rangle$ makes sense for functions $u, v \in B^0$. Take $\ell(v) = \langle \phi(u), v \rangle$, so that $\ell(\phi(u)) = 1$.

\bigskip
\noindent  {\bf Proof of Proposition \ref{prop:fantasma}:}
Proposition \ref{prop:abstract} implies the smoothness of $\lambda$ and $\phi$ for $u \in B^2_b$: the smoothness of  $u \in B^2_b \mapsto f'(u) \in B^0$ is (easily) proved.

We now consider $u \in \Hzb$.
We show that $\Phi : \Hzb \to {\cal B}(H^2_b, H^0) , \,
u \mapsto M_{f'(u)}$ is $ub$-continuous.
Take $v \in H^2_b, \| v \|=1$. Then
\[ \| (f'(u_m) - f'(u_\infty) )\  v \|_{H^0} \le \| f'(u_m) - f'(u_\infty) \|_{L^{2r}} \| v \|_{L^{2s}}, \quad 1/r + 1/s = 1.\]
Suppose now $n \ge 5$. Take $r= n/2, s = n/(n-2) > 1$ so that $H^2_b$ imbeds continuously in $L^{2s}$ and $ \| v \|_{L^{2s}} \le \CCC \| v \|_{H^2_b} $.
The result follows from the dominated convergence theorem because of the uniform $L^\infty$ bound on $f'(u_m)$ and $f'(u_\infty)$. For $n \le 4$, imbed $H^2_b$  in some $L^q$ for some $q >2$ and repeat.
To finish the proof,  compose $ u \mapsto M_{f'(u)}$ with the map from bounded potentials to eigenpairs. A second normalization yielding $L^2$-normal eigenvectors is clearly a smooth map.
\qed

\medskip
The formulas for the derivatives of $\lambda$ and $\phi$ at a point $u$ along a direction $v$ are familiar  \cite{Lax}. We confirm their validity for the more unusual scenario $u \in \Hzb$.

  \bigskip
\noindent {\bf Proof of Proposition \ref{prop:derlambda}:} For the directional derivatives $D \lambda(u) v$, subtract
\[ DF(u + tv)  \phi(u + tv)= \lambda(u + tv) \phi(u + tv) \, , \quad
DF(u )  \phi(u)= \lambda(u) \phi(u) \]
to obtain, denoting differences $g(u+tv) - g(u)$ by $Sg$,
\[ \langle DF(u + tv)S\phi \, , \, \phi(u+tv)\rangle
+ \langle SDF \, \phi(u), \, \phi(u+tv) \rangle = \]
\[ \langle \lambda(u + tv)S\phi \, , \, \phi(u+tv)\rangle
+ \langle S\lambda \,  \phi(u), \, \phi(u+tv) \rangle.\]
From the symmetry of $DF(u+tv)$, the first terms on each side  cancel each other.
We now take limits and use the continuity of $\phi: \Hzb \to H^0$: for $u, v \in \Hzb$,
\[ \lim_{t\to 0} \frac{1}{t}(\lambda(u+tv) - \lambda(u)) \langle \phi(u), \phi(u) \rangle =  \lim_{t\to 0} \langle \frac{1}{t}(DF(u+tv)- DF(u)) \, \phi(u), \, \phi(u) \rangle \]
and setting $\nabla \lambda(u) = - f''(u) \phi^2(u) \in \Hzb$, by the dominated convergence theorem,
\[ D \lambda(u) v = \lim_{t\to 0} \, - \langle \, \frac{1}{t} \, (f'(u+tv) - f'(u)) \, \phi(u) \, , \, \, \phi(u) \, \rangle = \langle \nabla \lambda(u) , v \rangle\, . \]
For the $ub$-continuity of $\nabla \lambda(u): \Hzb \to \Hzb$, take $u_m \ub u_\infty$  with  $\|u_m \|_\infty  \le {\cal C}$: we show that both terms go to zero in
\[ \| f''(u_m) \phi^2(u_m) - f''(u_\infty) \phi^2(u_m) \| + \| f''(u_\infty) \phi^2(u_m) - f''(u_\infty) \phi^2(u_\infty) \| \, .\]
The new ingredient is the uniform bound of the sequence $\{\phi(u_m)\}$ from Proposition  \ref{prop:lambdacont}. The first term goes to zero because $f '' $ is Lipschitz on $[-C, C]$.
\qed

%

\bigskip
Consider $W^2 \subset H^2_b,W^0 \subset H^0$ the subspaces of functions orthogonal to $\phi(u)$. Let $\Pi_{W }$ be the orthogonal projection from $B^0$ to $W^0$.

\begin{lemma} \label{lemma:phi} $\phi: \Hzb \to \RR$ admits Gateaux derivatives along  functions $v \in \Hzb$,
\[ D\phi(u) \, v =   (DF(u) - \lambda(u) I)_W^{-1}\Pi_W (f''(u)\,  \phi(u) \, v) \in H^0 \, . \]
Also, $D \phi : \Hzb \times \Hzb \to H^0 \, , D\phi(u,v) = D\phi(u) \, v,$ is $ub$-continuous.
\end{lemma}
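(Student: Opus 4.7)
The plan is to mimic the strategy of Proposition \ref{prop:derlambda}, but now carrying the full eigenvector rather than pairing against it. First I would subtract the eigenequations at $u+tv$ and at $u$, using $DF(u+tv) - DF(u) = -(f'(u+tv) - f'(u))$ (pointwise multiplication). After regrouping, this yields
\[ (DF(u+tv) - \lambda(u+tv) I)\,(\phi(u+tv) - \phi(u)) \e (\lambda(u+tv) - \lambda(u))\,\phi(u) + (f'(u+tv) - f'(u))\,\phi(u) \, . \]
Dividing by $t$, the right-hand side converges in $H^0$: on the $L^\infty$-ball containing $\{u + sv : |s| < t_0\}$, the mean value theorem gives $t^{-1}(f'(u+tv)-f'(u)) = \int_0^1 f''(u+stv)\,v\, ds \to f''(u)v$ in $H^0$ by dominated convergence, while $t^{-1}(\lambda(u+tv)-\lambda(u)) \to D\lambda(u)v$ by Proposition \ref{prop:derlambda}. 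The $ub$-continuity of $\phi$ handles the remaining factor, so the RHS divided by $t$ has $H^0$-limit $D\lambda(u)v\,\phi(u) + f''(u)\phi(u)v$.

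Next I would apply the projection $\Pi_W$ onto $\phi(u)^\perp$, which kills the term $D\lambda(u)v\,\phi(u)$ since $\phi(u) \perp W^0$. The normalization $\|\phi(u+tv)\|_{H^0}=1$ forces any $H^0$-limit of $t^{-1}(\phi(u+tv)-\phi(u))$ to lie in $W^0$. By Proposition \ref{prop:smooth}, the restriction $(DF(u)-\lambda(u)I)_W : W^2 \to W^0$ is Fredholm of index zero with kernel exactly $\langle \phi(u)\rangle$, hence an isomorphism with bounded inverse. To upgrade the formal derivation to an honest Gateaux derivative, I would rewrite the display as $t^{-1}(\phi(u+tv)-\phi(u)) = (DF(u+tv)-\lambda(u+tv)I)_{W_{u+tv}}^{-1}\,\Pi_{W_{u+tv}}\,R(t)$, where $R(t)$ denotes the right-hand side divided by $t$, and invoke uniform boundedness of these inverses under small $L^\infty$-perturbations of the potential, ensured by the persistence of the spectral gap around a simple isolated eigenvalue (Proposition \ref{prop:lambdacont}).

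Finally, for the $ub$-continuity of $(u,v) \mapsto D\phi(u)v$: if $u_n \ub u_\infty$ and $v_n \ub v_\infty$, then Proposition \ref{prop:fantasma} gives $\phi(u_n) \to \phi(u_\infty)$ in $H^0$ with uniform $L^\infty$ bounds, and dominated convergence with the Lipschitz character of $f''$ on bounded sets yields $f''(u_n)\phi(u_n)v_n \to f''(u_\infty)\phi(u_\infty)v_\infty$ in $H^0$. The projections $\Pi_{W_n} = I - \langle \cdot,\phi(u_n)\rangle\,\phi(u_n)$ converge in operator norm, and the inverses $(DF(u_n)-\lambda(u_n)I)_{W_n}^{-1}$ converge strongly via the second resolvent identity. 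The hard part will be this last step, since the codomains $W_n$ drift with $n$; I would handle it by orthogonally transporting $W_n$ onto the fixed $W_\infty$ via unitaries $U_n \to I$ built from $\phi(u_n)$, and then applying standard analytic perturbation theory on the fixed space $W_\infty$.
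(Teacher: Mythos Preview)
Your derivation follows the same subtract-and-regroup strategy as the paper, but you place the operator at the perturbed point $u+tv$, whereas the paper groups so that the \emph{fixed} operator $(DF(u)-\lambda(u)I)$ acts on $S\phi = \phi(u+tv)-\phi(u)$:
\[ (DF(u) - \lambda(u)I)\, S\phi \ = \ -\,(SDF - S\lambda)\,\phi(u+tv) \, . \]
This choice is not cosmetic: it lets the paper invert on the single fixed space $W = \phi(u)^\perp$ immediately, sidestepping the drifting subspaces $W_{u+tv}$ that push you into your resolvent-transport argument. Incidentally, your rewrite $t^{-1}S\phi = (DF(u+tv)-\lambda(u+tv)I)_{W_{u+tv}}^{-1}\Pi_{W_{u+tv}} R(t)$ is missing a projection on the left: inverting only recovers $\Pi_{W_{u+tv}} S\phi$, and you would still need to argue that the residual component $(1 - \langle \phi(u),\phi(u+tv)\rangle)\,\phi(u+tv)$ is $o(t)$ from the normalization --- easy, but it has to be said.

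For $ub$-continuity, the paper replaces your unitary-transport scheme with a one-line algebraic device: set
\[ \tilde T(u) \ = \ DF(u) - \lambda(u)I + \phi(u) \otimes \phi(u) \, , \]
a rank-one perturbation that is invertible on the \emph{full} space $H^2_b \to H^0$, coincides with $DF(u)-\lambda(u)I$ on $W$, sends $\phi(u)$ to itself, and is $ub$-continuous in $u$. One then writes $D\phi(u)v = \Pi_W\, \tilde T(u)^{-1}\, \Pi_W (f''(u)\phi(u)v)$, and continuity follows directly from continuity of inversion in ${\cal B}(H^2_b,H^0)$. This absorbs the moving-subspace difficulty into a fixed-space problem and is both shorter and more robust than constructing the unitaries $U_n$ you propose; it is worth adding to your toolkit.
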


\begin{proof}
Let $u, v, \in \Hzb$. Start as in the proof above to obtain
\[ \big( \, SDF(u )  - S\lambda(u) \, \big)  \,  \phi(u+tv)   + (DF(u) - \lambda(u) ) \, S\phi(u)  = 0 \, . \]
After dividing by $t$ and taking $t \to 0$, the first term converges to
\[ - f''(u)\,  \phi(u) \, v + \langle f''(u) \phi^2(u), v \rangle \, \phi(u) = - \Pi_W (f''(u)\,  \phi(u) \, v )\, ,\]
since $\Pi_W$ projects orthogonally. The restriction
$(DF(u) - \lambda(u) I)_W: W^2 \to W^0$ is an isomorphism. The derivative of $\phi$, in the second term,
exists and satisfies
\[ D\phi(u) \, v =   (DF(u) - \lambda(u) I)_W^{-1} \, \Pi_W \, (f''(u)\,  \phi(u) \, v)  \in W^2\, .\]

The spaces $W^2$ and $W^0$ depend on $u$, being orthogonal complements of $\phi(u)$: an  algebraic argument  clarifies continuity. Define $\tilde T = \tilde T(u): H^2_b \to H^0$ as
\[ \tilde T =  DF(u) - \lambda(u) I + \phi(u) \otimes \phi(u) \, \quad    \hbox{where } \, (\phi(u) \otimes \phi(u)) z = \langle \phi(u), z \rangle \, \phi(u)  \, . \]
Notice that $\tilde T$ leaves $W^2$ and $\langle \phi(u) \rangle$ invariant. Also, the restrictions to $W^2$ of $(DF(u) - \lambda(u) I)$ and $\tilde T$ coincide and are invertible. Since $\tilde T \phi(u) = \phi(u)$, $\tilde T$ is invertible. As  in the proof of the $ub$-continuity of $\lambda$, $\tilde T \in {\cal B}(H^2_b, H^0)$ varies $ub$-continuously in $u \in \Hzb$ . Inversion preserves continuity and thus
\[ D\phi(u) \, v \ = \ \Pi_W \ C^{-1} \ \Pi_W \, (f''(u) \phi(u) \, v ) \, ,\]
 is $ub$-continuous, as well as
\[ u \in  H^2_b \mapsto \Pi_W(f''(u) \phi^2(u)) =    f''(u)\,  \phi^2(u) \,  + \langle f''(u) \phi^2(u), \phi(u) \rangle \, \phi(u) \in H^0 . \quad \blacksquare\]
\end{proof}

\subsection{The functionals $\delta$, $\tau$ and the function $\Lambda$} \label{section: Lambda}
\medskip
Differentiability properties for $\Lambda : \Hzb \to \RR^2$ require equivalent statements for $\lambda$ (Proposition \ref{prop:derlambda}) and for $\delta: \Hzb \to \RR$, which we prove now.

\bigskip
\noindent{\bf Proof of Proposition \ref{prop:Lambda}:} Since $\delta (u)= \langle \nabla \lambda(u) , \phi(u) \rangle$, $ub$-continuity of $\delta$ follows from Propositions \ref{prop:fantasma} and \ref{prop:derlambda}.
We now take directional derivatives $D\delta(u)v$:
\[ D \delta(u) \, v = - \int_\Omega f'''(u) \, \phi^3(u) \, v - \int_\Omega f''(u) \, 3 \, \phi^2(u) \, (D\phi(u) \, v ) \, . \]
On the second term, using $ D\phi(u) \, v \ = \ \Pi_W \ C^{-1} \ \Pi_W \, (f''(u) \phi(u) \, v )$, we have
\[ \langle \, f''(u) \,3 \,\phi^2(u)\, ,\,  D\phi(u) \, v  \, \rangle  = \, \langle \, \Pi_W \, C^{-1} \, \Pi_W \, (f''(u) \, 3 \, \phi^2(u)\, ) ,   \, (f''(u) \, \phi(u) \, v ) \, \rangle \]
\[ = \, \langle \, \Pi_W \, \big( \, DF(u) - \lambda(u) I \, \big)_W^{-1} \, \Pi_W \, (f''(u)\,  3 \, \phi^2(u)),   \, (f''(u) \, \phi(u)\,  v )\,  \rangle \]
\[ =
\langle 3 \, w(u),   \, (f''(u) \, \phi(u) \, v ) \rangle \]
and the computation of $D\delta(u)v$ is complete: we are left with showing the continuity of $\nabla \delta(u)$. For $ u_m \, \ub \ u_\infty $, we show the $L^2$ convergences
\[ -f'''(u_m)\phi^3(u_m)  \to -f'''(u_\infty)\phi^3(u_\infty) \, ,  \]
\[  w(u_m) \, f''(u_m) \phi(u_m) \to  3 w(u_\infty) \, f''(u_\infty) \phi(u_\infty) \, .  \]
For the first term, proceed as in the argument for $\nabla \lambda$. For the second, we show
\[\| w(u_m) \big( \, f''(u_m) \phi(u_m) - f''(u_\infty) \phi(u_\infty)\big) \| +
\| \big( w(u_m) - w(u_\infty) \big) f''(u_\infty) \phi(u_\infty)\big) \| \to 0 \, . \]
We first prove that  $w(u_m) \to w(u_\infty)$ in $L^2$.

Since $f''(u_\infty) \phi(u_\infty)$ is bounded,  $\| \big( w(u_m) - w(u_\infty) \big) f''(u_\infty) \phi(u_\infty)\big) \| \to 0$. For the first term, split again: we show that
\[\| \big(w(u_m) - w(u_\infty) \big) \big( \, f''(u_m) \phi(u_m) - f''(u_\infty) \phi(u_\infty)\big) \| \] and
\[ \| w(u_\infty) \big( \, f''(u_m) \phi(u_m) - f''(u_\infty) \phi(u_\infty)\big) \|\]
go to $0$.
As before, $w(u_m) \to w(u_\infty)$ and $\, f''(u_m) \phi(u_m) - f''(u_\infty) \phi(u_\infty)$ are uniformly bounded, and the first term is done. The second follows by the dominated convergence theorem.


\medskip
The fact that $\Lambda|_{V_\ast}$ is $\CCC^1$ on finite dimensional subspaces $V_\ast$ follows from the statement just proved: its partial derivatives are continuous. One might  use the sup norm in the arguments: on $V_\ast$, the $L^2$ and the $L^\infty$ norms are equivalent.\qed

{

\parindent=0pt
\parskip=0pt
\obeylines

\bigskip

Marta Calanchi, Dipartimento di Matematica, Università di Milano,
Via Saldini 50, 20133 Milano, Italia

\smallskip

Carlos Tomei and André Zaccur, Departamento de Matem\'atica, PUC-Rio,
R. Mq. de S. Vicente 225, Rio de Janeiro, RJ 22453-900, Brazil

\bigskip

marta.calanchi@unimi.it
carlos.tomei@gmail.com
zaccur.andre@gmail.com
}


\begin{thebibliography}{[10]}
\bibitem{AP}{ A. Ambrosetti and G. Prodi,
{\it On the inversion of some differentiable mappings with singularities between Banach spaces},
Ann. Mat. Pura Appl. 93 (1972) 231-246.}
\bibitem{AP2}{ A. Ambrosetti and G. Prodi,
{\it A Primer of Nonlinear Analysis}, Cambridge Studies in Advanced Mathematics, 34, CUP,  Cambridge, 1993.}
\bibitem{BC}{M. Berger and P.T. Church, {\it
Complete integrability and perturbation of a nonlinear Dirichlet problem. I},
Indiana Univ. Math. J. 28 (1979), 935-952.}
\bibitem{BP}{ M.S. Berger and E. Podolak,
{\it On the solutions of a nonlinear Dirichlet problem},
Indiana Univ. Math. J. 24 (1974) 837-846.}
\bibitem{BST1} { D. Burghelea, N. Saldanha and C. Tomei, {\it Results on infinite dimensional topology and applications to the structure of the critical set of nonlinear Sturm-Liouville operators}, J. Differential Equations 188 (2003) 569-590.}
\bibitem{CaT}{ V. Cafagna and G. Tarantello, {\it Multiple solutions for some semilinear elliptic equations},
Math. Ann. 276 (1987) 643-656.}
\bibitem{CTZ1}{M. Calanchi, C. Tomei and A. Zaccur, {Fibers and the global geometry of functions},
http://arxiv.org/abs/1506.05090. }
    \bibitem{CMN}{ R. Chiappinelli, J. Mawhin and R. Nugari, {\it Generalized Ambrosetti-Prodi conditions for nonlinear two-point boundary value problems}, J. Diff. Eqs.69 (1987), 422-434.}
\bibitem{CDT}{ P.T. Church, E.N. Dancer and J.G. Timourian,
{\it The structure of a nonlinear elliptic operator}, Trans. Amer. Math. Soc. 338 (1993) 1-42}
\bibitem{ChT2} { P.T. Church and J.G. Timourian, {\it Global structure for nonlinear operators in differential and integral equations. II. Cusps.}, Topological nonlinear analysis, II  Progr. Nonlinear Differential Equations Appl., 27, Birkhäuser Boston, Boston, MA, (1997) 161-245.}
\bibitem{D}{N. Dancer, {\it On the ranges of certain weakly nonlinear elliptic partial differential equations},
J. Math. Pures Appl. 57  (1978) 351-366.}
\bibitem{GT}{D. Gilbarg and N. Trudinger, {\it Elliptic Partial Differential Equations of Second Order}, Springer, Berlin, 1998.}
\bibitem{Lax}{P. Lax, {\it Linear Algebra and its Applications}, Wiley, Hoboken, 2007.}
\bibitem{LM}{ F. Lazzeri and A.M. Micheletti, {\it An application of singularity theory to nonlinear differentiable mappings between Banach spaces}, Nonlinear Anal. 11 (1987) 795-808.}
    \bibitem{MST}{ I. Malta, N. Saldanha and C. Tomei, {\it Morin singularities and global geometry in a class of ordinary differential operators}, Top. Meth. Nonlin. Anal. 10 (1997), 137-169.}
\bibitem{MM}{ A. Manes and A.M. Micheletti, {\it Un'estensione della teoria variazionale classica degli autovalori per operatori ellittici del secondo ordine}, Boll. Unione Mat. Ital. 7 (1973) 285-301.}
\bibitem{MRZ}{ J. Mawhin, C. Rebelo and F. Zanolin, {\it Continuation theorems for Ambrosetti-Prodi type periodic problems}, Comm. Contemp. Math. 2 (2000) 87-126.}
\bibitem{MP}{ M. Montenegro and F.O. Paiva, {\it  An Ambrosetti-Prodi-type result for a quasilinear Neumann problem}, Proc. Edinb. Math. Soc. 55 (2012) 771-780.}
    \bibitem{Mo} { B. Morin, {\it Formes canoniques de singularités d'une application différentiable}, C.R. Acad. Sc. Paris 260 (1965) 5662-5665 and 6503-6506.}
        \bibitem{N}{G. Nardi, {\it Schauder estimates for solutions of Poisson's equation with Neumann boundary condition}, arXiv:1302.4103v4.}
\bibitem{O}{ R. Ortega, {\it Stability of a periodic problem of Ambrosetti-Prodi type}, Diff. Int. Equations 3, (1990) 275-284.}
\bibitem{P} {E. Podolak, {\it On the range of operator equations with an asymptotically nonlinear term}, Indiana Univ. Math. J. 25, 12 (1976) 1127-1137.}
\bibitem{R1}{ B. Ruf,  {\it Singularity theory and the geometry of a nonlinear elliptic equation}, Ann. Scuola Norm. Sup. Pisa C1. Sc., (4), 17 (1990), 1-33.}
\bibitem{R2}{B. Ruf, {\it Singularity theory and bifurcation phenomena in differential equations}, Topological Nonlinear Analysis, II, Progr. Nonlinear Differential Equations Appl., 27, Birkhäuser, Boston, MA, (1997)  315-395.}
\bibitem{R3} {B. Ruf, {\it Forced secondary bifurcation in an elliptic boundary value problem}, Diff. Int. Eqs. 5, 4, (1992) 793-804.}
    \bibitem{R4}{B. Ruf, {\it Higher singularities and forced secondary bifurcation}, SIAM J. Math.l Anal. 26, 5 (1995) 1342-1360.}
\bibitem{TZ}{C. Tomei and A. Zaccur, {\it Geometric aspects of Ambrosetti-Prodi operators with Lipschitz nonlinearities}, Analysis and topology in nonlinear differential equations, Progr. Nonlinear Differential Equations Appl., 85, Springer, Cham, (2014)  445-456.}
    \bibitem{Z}{E. Zeidler, {\it Nonlinear Functional Analysis and its Applications, IV}, Springer (1997), New York.}
\end{thebibliography}
\end{document}